\documentclass{article}
\oddsidemargin =0mm
\evensidemargin =0mm
\topmargin =-15mm
\textwidth =160mm
\textheight =240mm
\usepackage{amsmath,amssymb,amsthm,amsxtra,bm,graphicx}
\usepackage[all]{xy}
\usepackage[hypertex]{hyperref}

\newtheorem{thm}{Theorem}[section]
\newtheorem{prop}[thm]{Proposition}
\newtheorem{lem}[thm]{Lemma}

\newtheorem{conj}[thm]{Conjecture}
\newtheorem*{conjA}{Conjecture~A}
\newtheorem*{mthm}{Main Theorem}
\theoremstyle{definition}
\newtheorem{dfn}[thm]{Definition}
\newtheorem{rem}[thm]{Remark}
\newtheorem{notation}[thm]{Notation}
\newtheorem{assumption}[thm]{Assumption}
\newtheorem*{example}{Example}
\newtheorem{construction}[thm]{Construction}

\newcommand{\N}{\mathbb{N}}

\newcommand{\Z}{\mathbb{Z}}


\newcommand{\bvec}[1]{\mbox{\boldmath $#1$}}
\newcommand{\ab}[2]{|#1|_0(#2)}

\newcommand{\nab}[2]{|#1|^{\mathrm{naive}}_0(#2)}
\newcommand{\RO}[1]{O((\log{(1/\rho)})^{#1})}
\newcommand{\Ro}[1]{(\log{(1/\rho)})^{#1}}

\newcommand{\bk}[1]{i(#1)}
\newcommand{\s}[1]{s_{#1}}
\newcommand{\m}[1]{m(#1)}
\newcommand{\NN}[1]{N(#1)}
\newcommand{\x}[1]{v(#1)}
\newcommand{\xx}{v}


\newcommand{\alg}{\mathrm{alg}}
\newcommand{\naive}{\mathrm{naive}}
\newcommand{\ur}{\mathrm{ur}}
\newcommand{\Hom}{\mathrm{Hom}}
\newcommand{\NP}{\mathrm{NP}}
\newcommand{\Sol}{\mathrm{Sol}}
\newcommand{\con}{\mathrm{con}}

\newcommand{\an}{\mathrm{an}}
\newcommand{\Fil}{\mathrm{Fil}}

\begin{document}

\title{On the rationality and continuity of logarithmic growth filtration of solutions of $p$-adic differential equations}
\author{Shun Ohkubo
\footnote{
Graduate School of Mathematics, Nagoya University, Furocho, Chikusaku, Nagoya 464-8602, Japan. E-mail address: shun.ohkubo@gmail.com}
}
\date{\today}

\maketitle

\begin{abstract}
We study the asymptotic behavior of solutions of Frobenius equations defined over the ring of overconvergent series. As an application, we prove Chiarellotto-Tsuzuki's conjecture on the rationality and right continuity of Dwork's logarithmic growth filtrations associated to ordinary linear $p$-adic differential equations with Frobenius structures.
\end{abstract}

\tableofcontents

\section{Introduction}

We consider an ordinary linear $p$-adic differential equation
\[
Dy=\frac{d^ny}{dx^n}+a_{n-1}\frac{d^{n-1}y}{dx^{n-1}}+\dots+a_0y=0,
\]
whose coefficients are bounded on the $p$-adic open unit disc $|x|<1$. We define its solution space by
\[
\Sol(D):=\{y\in\mathbb{Q}_p[\![x]\!];Dy=0\}.
\]
In her study of $p$-adic elliptic functions, Lutz proves that any solution $y$ of $Dy=0$ has a non-zero radius of convergence $r$ (\cite[Th\'eor\`eme~IV]{Lut}). In the paper \cite{Dwo}, Dwork studies the asymptotic behavior of $y$ near the boundary $|x|=r$ assuming that any solution of $Dy=0$ converges in a common open disc $|x|<r$. For simplicity, we assume $r=1$. The most general result in this viewpoint is that $y$ has a logarithmic growth (log-growth) $n-1$, that is,
\[
\sup_{|x|=\rho}|y(x)|=\RO{1-n}\text{ as }\rho\uparrow 1.
\]
Dwork also defines the so-called special log-growth filtration of $\Sol(D)$ by
\[
\Sol_{\lambda}(D):=\{y\in\Sol(D);\sup_{|x|=\rho}|y(x)|=\RO{-\lambda}\text{ as }\rho\uparrow 1 \}.
\]

We assume that the $a_i$'s are rational functions over $\mathbb{Q}_p$. Over the $p$-adic field, a na\"ive analogue of analytic continuation fails. In particular, the existence of local solutions of $Dy=0$ at the disc $|x-a|<1$ for any $a$ does not imply the existence of global solutions. Even the $p$-adic exponential series $e^x=1+x+2^{-1}x^2+\dots$, which is a solution of $dy/dx=y$, has a radius of convergence $p^{-1/(p-1)}$. Hence, it is natural to ask how the special log-growth filtration varies from disc to disc. Assume $p\neq 2$. In \cite{book}, Dwork provides an answer to this question for the hypergeometric differential equation
\[
Dy=x(1-x)\frac{d^2y}{dx^2}+(1-2x)\frac{dy}{dx}-\frac{1}{4}y=0,
\]
which arises from the Legendre family of elliptic curves over $\mathbb{F}_p$
\[
E_x:z^2=w(w-1)(w-x),\ x\neq 0,1.
\]
Owing to its geometric origin, the hypergeometric differential equation admits a Frobenius structure: let $\bar{a}\in\mathbb{F}_p\setminus\{0,1\}$, and let $a\in\mathbb{Z}_p$ be a lift of $\bar{a}$. The Frobenius slopes of the solution space of $Dy=0$ at the disc $|x-a|<1$ are $0,1$ if $E_{\bar{a}}$ is ordinary, and $1/2,1/2$ if $E_{\bar{a}}$ is supersingular. Dwork proves that the special log-growth filtration at the disc $|x-a|<1$ coincides with the Frobenius slope filtration at the disc $|x-a|<1$.

In the last few decades, $p$-adic differential equations have been extensively studied from many perspectives. As for the existence of solutions, Andr\'e, Kedlaya, and Mebkhout (\cite{And0},\cite{Ann},\cite{Meb}) independently prove the $p$-adic local monodromy theorem, which asserts the quasi-unipotence of $p$-adic differential equations defined over the Robba ring with Frobenius structures. Additionally, several striking applications of $p$-adic differential equations emerge: for example, Berger relates a certain $p$-adic representation of the absolute Galois group of $\mathbb{Q}_p$ to a $p$-adic differential equation over the Robba ring; then, he proves Fontaine's $p$-adic monodromy conjecture by using the $p$-adic local monodromy theorem (\cite{Ber}).

However, Dwork's works on the log-growth of solutions of $p$-adic differential equations have been neglected for a long period until Chiarellotto and Tsuzuki drew attention to it in \cite{CT}. We briefly summarize some recent developments on this subject.

\begin{enumerate}
\item[$\bullet$] In \cite{CT}, Chiarellotto and Tsuzuki formulate a fundamental conjecture on the log-growth filtrations for $p$-adic differential equations with Frobenius structures (see Conjecture~\ref{conj:sp}). Their conjecture is two-fold. The first part can be stated as follows:
\begin{conjA}[{Conjecture~\ref{conj:sp}~(i)}]
Let $Dy=0$ be a $p$-adic differential equation with a Frobenius structure. Then, the breaks of the filtration $\Sol_{\bullet}(D)$ are rational and $\Sol_{\lambda}(D)=\cap_{\mu>\lambda}\Sol_{\mu}(D)$ for all $\lambda\in\mathbb{R}$.
\end{conjA}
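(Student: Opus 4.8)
The plan is to deduce Conjecture~A from a study of pure Frobenius equations over the ring $\mathcal{E}^{\dagger}$ of overconvergent power series, the point being that once a Frobenius structure is present the boundary behaviour of solutions is dictated by Frobenius slopes, which are rational. First I would attach to $D$ its $(\phi,\nabla)$-module $M$ over $\mathcal{E}^{\dagger}$. Because $M$ carries a Frobenius structure, Dwork's transfer theorem guarantees that every solution of $Dy=0$ converges on the whole disc $|x|<1$, so $\Sol(D)$ is the space of $\nabla$-horizontal sections of $M$ in the ring $\mathcal{O}$ of bounded analytic functions on $|x|<1$; this is an $n$-dimensional $\mathbb{Q}_p$-space, the horizontal Frobenius induces on it a linear automorphism $F$, and since the Frobenius matrix has overconvergent (in particular bounded) entries, $F$ and $F^{-1}$ preserve the log-growth filtration $\Sol_{\bullet}(D)$. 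The reduction step is then to show that this filtered space, and hence $\Sol_{\bullet}(D)$, is computed by the log-growth filtration on the solution space of a Frobenius equation $\phi(\mathbf{y})=A\mathbf{y}$ with $A\in GL_n(\mathcal{E}^{\dagger})$ extracted from $M$; this uses Dwork--Robba type estimates to see that passing to horizontal sections neither creates nor destroys log-growth. At this point Conjecture~A becomes the assertion --- the main theorem of the paper --- that for any such Frobenius equation the log-growth filtration of its solutions has rational breaks and satisfies $\Sol_{\lambda}=\bigcap_{\mu>\lambda}\Sol_{\mu}$ for all $\lambda$.

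For the Frobenius equation I would argue by dévissage along the Kedlaya slope filtration. The $\phi$-module $N=((\mathcal{E}^{\dagger})^{n},\phi_A)$ has a canonical filtration $0=N_0\subset N_1\subset\cdots\subset N_{\ell}=N$ by $\phi$-submodules defined over $\mathcal{E}^{\dagger}$ with $\mathrm{gr}_i N$ pure of slope $t_i$, the $t_i$ strictly increasing and rational. For a pure isoclinic module all solutions have log-growth $0$ (after twisting to the unit-root case, or by Chiarellotto--Tsuzuki's computation of the log-growth polygon of an isoclinic module), so each graded piece contributes nothing and the entire log-growth is produced by the extensions linking the pieces. The core computation is that lifting a solution of $\mathrm{gr}_i N$ down through the filtration to $N_1$ multiplies its growth by a factor which is \emph{exactly} of order $(\log(1/\rho))^{-(t_i-t_j)}$ for the relevant lower slopes $t_j$ --- the exponent being the slope difference on the nose, with no stray logarithmic corrections. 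Iterating, every solution acquires a well-defined exact log-growth lying in the finite set $\{\,t_i-t_j : i\ge j\,\}$ of rational numbers, so $\Sol_{\bullet}$ is the right-continuous step function with jumps at these values; transporting this back through the reduction step gives rationality of the breaks of $\Sol_{\bullet}(D)$ and, since any $y\in\bigcap_{\mu>\lambda}\Sol_{\mu}(D)$ then has exact log-growth $\le\lambda$, also $\Sol_{\lambda}(D)=\bigcap_{\mu>\lambda}\Sol_{\mu}(D)$.

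The main obstacle is the core computation above: extracting from the slope filtration over $\mathcal{E}^{\dagger}$ a \emph{sharp} asymptotic bound on the solutions of the associated inhomogeneous Frobenius equations, with exponent equal to the slope difference and no $\log\log$ error terms. It is here that overconvergence of $A$ is indispensable --- over the completed ring $\mathcal{E}$ the slope filtration splits and this interaction is invisible, so a mere bound on the coefficients $a_i$ does not suffice --- and one must turn the overconvergence into uniform estimates on the $\phi$-iterates that control both the size of the extension cocycles and the error incurred in solving against them. This exactness of the estimate is what simultaneously forces rationality of the breaks and right continuity; the comparison used in the reduction step, by contrast, is technical but essentially routine once the sharp estimates for Frobenius equations are in hand.
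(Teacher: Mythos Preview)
Your proposal takes a different route from the paper and contains a genuine gap at the ``core computation''. You propose to filter the $\phi$-module $N$ over $\mathcal{E}^{\dagger}$ by its (generic) slope filtration with slopes $t_1<\dots<t_\ell$, argue that pure graded pieces contribute log-growth $0$, and claim that lifting a solution through an extension contributes log-growth \emph{exactly} $t_i-t_j$, so that every break of $\Sol_{\bullet}(D)$ lies in the finite set $\{t_i-t_j:i\ge j\}$ of generic slope differences.

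This last claim is false. The breaks of the special log-growth filtration are not governed by generic slope differences alone; they mix the \emph{special} Frobenius slopes (those of the $\sigma$-module $V(M)\cong\Sol(D)$ over $K$) with the generic ones. Concretely, for the hypergeometric equation at a supersingular point (the very example in the introduction), the generic Frobenius slopes are $0,1$ while the special slopes are $1/2,1/2$; the rank~$2$ case of Chiarellotto--Tsuzuki's theorem gives the special log-growth break at $1/2$, which is not in $\{0,1\}=\{t_i-t_j\}$. So your d\'evissage along the generic slope filtration cannot by itself produce the correct list of candidate breaks, and the claimed ``exact order $(\log(1/\rho))^{-(t_i-t_j)}$'' for the lifting step is simply wrong in general. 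What the paper eventually proves (see the final Proposition) is that every break has the form $-s+s'$ with $s$ a special and $s'$ a generic Frobenius slope.

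The paper's method is built precisely to capture this interaction. One takes a Frobenius eigenvector $v\in\mathfrak{Sol}(M)$ of \emph{special} slope $s$ and expresses it in terms of a cyclic vector $e$ of $M^{\vee}$ over $\Gamma^{\alg}_{\con}[p^{-1}]$; the resulting scalar Frobenius equation has slopes $-s_k-s<\dots<-s_1-s$, mixing $s$ with the \emph{generic} slopes $s_j$. The new idea is to choose $e$ \emph{generic} in the sense that every point $(i,-\log_q|a_i|_0(1))$ lies on the Newton polygon of the twisted polynomial (condition~$(*)$); this is what allows the explicit two-sided estimate (Theorem~\ref{thm:Frob}) showing that $y$ is either bounded or has log-growth \emph{exactly} $s+s_j$ for some $j$. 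The lower bound in particular (Proposition~\ref{prop:lower}) uses $(*)$ in an essential way and is where the sharpness you correctly flag as the main obstacle is obtained---but via cyclic vectors and condition~$(*)$, not via slope-filtration d\'evissage.
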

The second part is about a comparison of the log-growth filtration and the Frobenius slope filtration under a certain technical assumption, which is based on Dwork's work on the hypergeometric differential equation. They prove the conjecture in the rank~$2$ case in \cite{CT}. They also provide a complete answer to a generic version of their conjecture in \cite{CT2}.

\item[$\bullet$] In \cite{And}, Andr\'e proves Dwork's conjecture on a specialization property for the log-growth filtration, which is an analogue of Grothendieck-Katz specialization theorem on Frobenius structure.

\item[$\bullet$] In \cite{pde}, Kedlaya studies effective convergence bounds on the solutions of $p$-adic differential equations with nilpotent singularities, which allows the $a_i$'s to have a pole at $x=0$. Then, he proves a partial generalization of Chiarellotto-Tsuzuki's earlier works to $p$-adic differential equations with nilpotent singularities.
\end{enumerate}

Our main result in this paper is
\begin{mthm}[{Theorem~\ref{thm:main}~(i)}]
Conjecture~A is true.
\end{mthm}
Under a certain technical assumption, we also prove the second part of Chiarellotto-Tsuzuki's conjecture (Theorem~\ref{thm:main}~(ii)).

\subsection*{Strategy of proof}

We present the proof of the rationality of breaks of the filtration $\Sol_{\bullet}(D)$. Let $\mathbb{Q}_p[\![x]\!]_0:=\mathbb{Z}_p[\![x]\!][p^{-1}]$ be the ring of bounded functions on the open unit disc, and $\sigma$ a $\mathbb{Q}_p$-algebra endomorphism of $\mathbb{Q}_p[\![x]\!]_0$ such that $\sigma(x)=x^p$. Instead of a na\"ive $p$-adic differential equation $Dy=0$, we consider a finite free $\mathbb{Q}_p[\![x]\!]_0$-module $M$ of rank $n$ endowed with an action of $d/dx$. The existence of a Frobenius structure of $Dy=0$ is equivalent to the existence of a $\sigma$-semi-linear structure $\varphi$ on $M$ compatible with $\nabla$. In \cite{CT}, Chiarellotto and Tsuzuki establish a standard method for studying the log-growth filtration associated to $M$ as follows. We fix a cyclic vector $e$ of $M$ as a $\sigma$-module over the fraction field of $\mathbb{Q}_p[\![x]\!]_0$. Let $V(M)$ be the set of horizontal sections of $M$ after tensoring with the ring of analytic functions over the open unit disc. Let $v\in V(M)$ be a Frobenius eigenvector, i.e., $\varphi(v)=\lambda v$ for some $\lambda\in\mathbb{Q}_p$. If we write $v$ as a linear combination of $e,\varphi(e),\dots,\varphi^{n-1}(e)$, then the coefficient $f$ of $\varphi^{n-1}(e)$ satisfies a certain Frobenius equation
\[
b_nf^{\sigma^n}+b_{n-1}f^{\sigma^{n-1}}+\dots+b_0f=0,\ b_i\in \mathbb{Q}_p[\![x]\!]_0.
\]
Then, the rationality of breaks of $\Sol_{\bullet}(D)$ is reduced to the rationality of the log-growth of $f$, i.e., the existence of $\lambda\in\mathbb{Q}$ such that
\[
\sup_{|x|=\rho}|f(x)|=\RO{-\lambda}\text{ as }\rho\uparrow 1
\]
and
\[
\sup_{|x|=\rho}|f(x)|\neq \RO{-\mu}\text{ as }\rho\uparrow 1
\]
for any $\mu<\lambda$. The rationality of the log-growth of $f$ is proved by Chiarellotto and Tsuzuki in \cite{CT} when $n=2$, then by Nakagawa in \cite{Nak} when $n$ is arbitrary under the assumption that the number of breaks of the Newton polygon of $b_nX^n+b_{n-1}X^{n-1}+\dots+b_0$ as a polynomial over the Amice ring $\mathcal{E}$ is equal to $n$. Nakagawa's assumption is too strong since it is equivalent to assuming that the number of breaks of the Frobenius filtration of $M$ tensored with $\mathcal{E}$ is equal to $n$. Unfortunately, a na\"ive attempt to generalize Nakagawa's result without the assumption on the Newton polygon seems to fail.

To overcome this difficulty, we carefully choose a cyclic vector $e$ in \S~\ref{sec:gen}: by definition, the Newton polygon of $b_nX^n+b_{n-1}X^{n-1}+\dots+b_0$ is the boundary of the lower convex hull of some set of points associated to the $b_i$'s. Our requirement for $e$ is that each plotted point belongs to the Newton polygon. The construction of $e$ is performed after a certain base change which is described in Kedlaya's framework of analytic rings. By using our cyclic vector $e$, the corresponding Frobenius equation is defined over Kedlaya's ring. Hence, we need to introduce a notion of log-growth on Kedlaya's ring (\S~\ref{sec:lg}). Then, we generalize Nakagawa's calculation in \S~\ref{sec:Frob}. Finally, we obtain the rationality of the log-growth filtration of $V(M)$ in \S~\ref{sec:pf}.

\section{Summary of notation}

We summarize our notation in this paper. Basically, we adopt the notation in \cite{CT2}. In the appendix, we have a diagram describing relations between various rings defined in the following.

\subsection{Coefficient rings}
\begin{enumerate}
\item [$p$]: a prime number.

\item[$K$]: a complete discrete valuation field of characteristic $(0,p)$.

\item[$\mathcal{O}_K$]: the integer ring of $K$.

\item[$k_K$]: the residue field of $K$.

\item[$\pi_K$]: a uniformizer of $\mathcal{O}_K$.

\item[$|\cdot|$]: the $p$-adic absolute value on $K^{\alg}$ associated to a valuation of $K$, normalized by $|p|=p^{-1}$.

\item[$q$]: a positive power of $p$.

\item[$q^s\in\mathbb{Q}$]: Let $s$ be a rational number and write $s=a/b$ with relatively prime $a,b\in\mathbb{Z}$. The notation ``$q^s\in\mathbb{Q}$'' means that $b$ divides $\log_p{q}$, and we put $q^s:=p^{a\log_p{q}/b}$.

\item[$\sigma$]: a $q$-Frobenius on $\mathcal{O}_K$, i.e., a local ring endomorphism of $\mathcal{O}_K$ such that $\sigma(a)\equiv a^q\mod{\pi_K\mathcal{O}_K}$.

\item[$K^{\sigma}$]: the inductive limit of $K$
\[\xymatrix{
K\ar[r]^{\sigma}&K\ar[r]^{\sigma}&\dots.
}\]
We regard $K^{\sigma}$ as an extension of $K$. Then, $K^{\sigma}$ is a Henselian discrete valuation field, whose value group coincides with the value group of $K$, with residue field $k_K^{p^{-\infty}}$.

\item[$K^{\sigma,\ur}$]: the completion of the maximal unramified extension of $K^{\sigma}$. Then, $K^{\sigma,\ur}$ is a complete discrete valuation field, whose value group coincides with the value group of $K$, with the residue field $k_K^{\alg}$. Moreover, $\sigma$ induces a $q$-Frobenius on $K^{\sigma,\ur}$.
\end{enumerate}

\subsection{Various rings of functions}
In the appendix, we have a diagram of the rings mentioned in this paper including the following rings of functions.

\begin{enumerate}
\item[$x$]: an indeterminate.

\item[$\nab{\cdot}{\rho}$]: the multiplicative map
\[
K[\![x]\!]\to \mathbb{R}_{\ge 0}\cup\{\infty\};\sum_{n\in\mathbb{N}}a_nx^n\mapsto\sup_{n\in\mathbb{N}}|a_n|\rho^n
\]
defined for $\rho\in [0,1]$.

\item[$K\{x\}$]: the $K$-algebra of analytic functions on the open unit disc $|x|<1$, i.e.,
\[
K\{x\}:=\left\{\sum_{n\in\mathbb{N}}a_nx^n\in K[\![x]\!];|a_n|\rho^n\to 0\ (n\to\infty)\forall \rho\in [0,1)\right\}.
\]
Note that $\nab{\cdot}{\rho}$ defines a multiplicative non-archimedean norm on $K\{x\}$ if $\rho\neq 0$.

\item[{$K[\![x]\!]_{\lambda}$}]: the Banach $K$-subspace of power series of logarithmic growth (log-growth) $\lambda$ in $K\{x\}$ for $\lambda\in \mathbb{R}_{\ge 0}$, i.e.,
\begin{align*}
K[\![x]\!]_{\lambda}:&=\left\{\sum_{n\in\mathbb{N}}a_nx^n\in K[\![x]\!];\sup_{n\in\mathbb{N}}|a_n|/(n+1)^{\lambda}<\infty\right\}\\
&=\left\{f\in K\{x\};\nab{f}{\rho}=\RO{-\lambda}\text{ as }\rho\uparrow 1\right\},
\end{align*}
where the last equality follows from \cite[Lemma~2.2.1~(iv)]{And}. Note that $K[\![x]\!]_0$ coincides with the ring of bounded functions on the open unit disc $|x|<1$, i.e.,
\[
K[\![x]\!]_0=\mathcal{O}_K[\![x]\!][\pi_K^{-1}]=\left\{\sum_{n\in\mathbb{N}}a_nx^n\in K[\![x]\!];\sup_{n\in\mathbb{N}}|a_n|<\infty\right\}.
\]
We define $K[\![x]\!]_{\lambda}:=0$ for $\lambda\in\mathbb{R}_{<0}$. Note that $K[\![x]\!]_{\lambda}$ is stable under the derivation $d/dx$.

\item[$\mathcal{E}$]: the fraction field of the $p$-adic completion of $\mathcal{O}_K[\![x]\!][x^{-1}]$, i.e.,

\[
\mathcal{E}:=\left\{\sum_{n\in\mathbb{Z}}a_nx^n\in K[\![x,x^{-1}]\!];\sup_{n\in\mathbb{Z}}|a_n|<\infty, |a_n|\to 0\text{ as }n\to-\infty\right\}.
\]
Note that $\mathcal{E}$ is canonically endowed with a norm which is an extension of $\nab{\cdot}{1}$. Then, $(\mathcal{E},\nab{\cdot}{1})$ is a complete discrete valuation field of mixed characteristic $(0,p)$ with uniformizer $\pi_K$ and residue field $k_K(\!(x)\!)$.

\item[$\mathcal{E}^{\dagger}$]: the ring of overconvergent power series in $\mathcal{E}$, i.e.,
\[
\mathcal{E}^{\dagger}:=\left\{\sum_{n\in\mathbb{Z}}a_nx^n\in \mathcal{E};|a_n|\rho^n\to 0\ (n\to-\infty)\text{ for some }\rho\in (0,1)\right\}.
\]
Note that $(\mathcal{E}^{\dagger},\nab{\cdot}{1})$ is a Henselian discrete valuation field whose completion is $\mathcal{E}$.

\item[$\mathcal{R}$]: the Robba ring with variable $x$ and coefficient $K$, i.e.,
\[
\mathcal{R}:=\left\{\sum_{n\in\mathbb{Z}}a_nx^n\in K[\![x,x^{-1}]\!];|a_n|\rho^n\to 0\ (n\to\pm \infty)\ \forall \rho\in (\rho_0,1)\text{ for some }\rho_0\in (0,1)\right\}.
\]

\item[$\sigma$]: a $q$-Frobenius on $\mathcal{O}_K[\![x]\!]$, which is an extension of $\sigma$, defined by fixing $\sigma(x)= x^q\mod{\mathfrak{m}_K\mathcal{O}_K[\![x]\!]}$. Note that $\sigma$ induces ring endomorphisms on $K[\![x]\!]$, $K\{x\}$, $\mathcal{E}^{\dagger}$, $\mathcal{E}$, and $\mathcal{R}$, and $K[\![x]\!]_{\lambda}$ is stable under $\sigma$ (\cite[4.6.4]{Chr}).

\item[$\mathcal{E}_t$]: a copy of $\mathcal{E}$ in which $x$ is replaced by another indeterminate $t$. As above, we regard $\mathcal{E}_t$ as a complete discrete valuation field where $t$ is a $p$-adic unit. In the literature, $t$ is called Dwork's generic point (\cite[Definition~9.7.1]{pde}).

\item[{$\mathcal{E}_t[\![X-t]\!]_0$}]: the ring of bounded functions on $|X-t|<1$ with variable $X-t$ and coefficient $\mathcal{E}_t$. We endow $\mathcal{E}_t[\![X-t]\!]_0$ with $\mathcal{E}$-algebra structure by the $K$-algebra homomorphism
\[
\tau:\mathcal{E}\to\mathcal{E}_t[\![X-t]\!]_0;f\mapsto\sum_{n\in\mathbb{N}}\frac{1}{n!}\left.\left(\frac{d^nf}{dx^n}\right)\right|_{x=t}(X-t)^n.
\]
Since $\tau(K)\subset \mathcal{E}_t$ and $\tau(x)=X$, $\tau$ is equivariant under the derivations $d/dx$ and $d/dX$. We define a $q$-Frobenius on $\mathcal{E}_t[\![X-t]\!]_0$ by $\sigma|_{\mathcal{E}_t}=\sigma$ (by identifying $t$ as $x$) and $\sigma(X-t)=\tau(\sigma(x))-\sigma(x)|_{x=t}$. Then, $\tau$ is also $\sigma$-equivariant.

\item[{$\mathcal{E}_t[\![X-t]\!]_{\lambda}$}]: the Banach $\mathcal{E}_t$-subspace of power series of log-growth $\lambda$ in $\mathcal{E}_t\{X-t\}$.
\end{enumerate}
Let $R$ be either $K[\![x]\!]_0$, $K\{x\}$, $\mathcal{E}^{\dagger}$, $\mathcal{E}$, or $\mathcal{R}$. We define $\Omega^1_R:=Rdx$ with a $K$-linear derivation $d:R\to \Omega^1_R;f\mapsto (df/dx)dx$. We also endow $\Omega^1_R$ with a semi-linear $\sigma$-action defined by $\sigma(dx):=d\sigma(x)$. For $\mathcal{E}_t[\![X-t]\!]_0$ and $\mathcal{E}_t\{X-t\}$, we also define a corresponding $\Omega^1_{\bullet}$ by replacing $K$ and $x$ by $\mathcal{E}_t$ and $X-t$, respectively.

\subsection{Filtration and Newton polygon}

Let $V$ be a finite dimensional vector space over a field $F$. Let $V^{\bullet}=\{V^{\lambda}\}_{\lambda\in\mathbb{R}}$ be a decreasing filtration by subspaces of $V$. Then, we define
\[
V^{\lambda-}:=\bigcap_{\mu<\lambda}V^{\mu},\ V^{\lambda+}:=\bigcup_{\mu>\lambda}V^{\mu}.
\]
We say that $\lambda\in\mathbb{R}$ is a break of $V^{\bullet}$ if $V^{\lambda-}\neq V^{\lambda+}$. We also define the multiplicity of $\lambda$ as $\dim_FV^{\lambda-}-\dim_FV^{\lambda+}$. We say that $V^{\bullet}$ is rational if all breaks of $V^{\bullet}$ are rational. We say that $V^{\bullet}$ is right continuous if $V^{\lambda}=V^{\lambda+}$ for all $\lambda\in\mathbb{R}$. We say that $V^{\bullet}$ is exhaustive or separated if $\cup_{\lambda\in\mathbb{R}}V^{\lambda}=V$ or $\cap_{\lambda\in\mathbb{R}}V^{\lambda}=0$, respectively.

Similarly, for an increasing filtration $V_{\bullet}=\{V_{\lambda}\}_{\lambda\in\mathbb{R}}$ of $V$, we define
\[
V_{\lambda-}:=\bigcup_{\mu<\lambda}V_{\mu},\ V_{\lambda+}:=\bigcap_{\mu>\lambda}V_{\mu}.
\]
We also define a break, a rationality, and a right continuity of $V_{\bullet}$ by replacing superscripts by subscripts.

We define the Newton polygon of a filtration as follows (\cite[3.3]{CT}). Let $\{V^{\lambda}\}_{\lambda\in\mathbb{R}}$ (resp. $\{V_{\lambda}\}_{\lambda\in\mathbb{R}}$) be a decreasing (resp. increasing) filtration of $V$. Let $\lambda_1<\dots<\lambda_n$ be the breaks of the filtration $V^{\bullet}$ (resp. $V_{\bullet}$) with multiplicities $m_1,\dots,m_n$. We define the Newton polygon of $V^{\bullet}$ (resp. $V_{\bullet}$) as the piecewise linear function in the $xy$-plane whose left endpoint is $(0,0)$, with slopes $\lambda_1,\dots,\lambda_n$ whose projections to the $x$-axis have lengths $m_1,\dots,m_n$.

\section{Chiarellotto-Tsuzuki's conjectures and main theorem}

We first recall the definition of $(\sigma,\nabla)$-modules over $K[\![x]\!]_0$ and $\mathcal{E}$. Then, we recall the definition of the log-growth filtrations for $(\sigma,\nabla)$-modules over $K[\![x]\!]_0$ and $\mathcal{E}$, and recall Chiarellotto-Tsuzuki's conjectures. After recalling known results on the conjectures, we state our main results. Our basic references are \cite{CT}, \cite{CT2}, and \cite{pde}.

\subsection{$\sigma$-modules}\label{subsec:phi}

Let $R$ be a commutative ring with a ring endomorphism $\delta$. We denote $\delta(r)$ by $r^{\delta}$ if no confusion arises. A $\delta$-module $M$ is a finite free $R$-module $M$ endowed with an $R$-linear isomorphism $\varphi:\delta^*M:=R\otimes_{\delta,R}M\to M$. We can view $M$ as a left module over the twisted polynomial ring $R\{\delta\}$ (\cite[14.2.1]{pde}). If we regard $\varphi$ as a $\delta$-linear endomorphism of $M$, then $(M,\varphi^n)$ for $n\in\mathbb{N}$ is a $\delta^n$-module over $R$. For $\alpha\in R^{\times}$, $(M,\alpha\varphi)$ is also a $\delta$-module over $R$.

Let $M$ be a $\sigma$-module over $K$ ($K$ might be $\mathcal{E}$). We recall the Frobenius slope filtration of $M$ (\cite[\S~2]{CT}). We say that $M$ is \'etale if there exists an $\mathcal{O}_K$-lattice $\mathfrak{M}$ of $M$ such that $\varphi(\mathfrak{M})\subset \mathfrak{M}$ and $\varphi(\mathfrak{M})$ generates $\mathfrak{M}$. We say that $M$ is pure of slope $\lambda\in\mathbb{R}$ if there exists $n\in\mathbb{N}_{>0}$ and $\alpha\in K$ such that $\log_{q^n}|\alpha|=-\lambda$ and $(M,\alpha^{-1}\varphi^n)$ is \'etale (\cite[2.1]{CT}). For a $\sigma$-module $M$ over $K$, there exists a unique increasing filtration $\{S_{\lambda}(M)\}_{\lambda\in\mathbb{R}}$, called the slope filtration,  of $M$ such that $S_{\lambda}(M)/S_{\lambda-}(M)$ is pure of slope $\lambda$. We call the breaks of $S_{\bullet}(M)$ the Frobenius slopes of $M$. The following are some basic properties of the slope filtration:
\begin{itemize}
\item The slope filtration of $M$ is exhaustive, separated, and right continuous.

\item The Frobenius slopes of $M$ are rational.

\item The slope filtration of $(M,\varphi^n)$ is independent of the choice of $n\in\mathbb{N}_{>0}$.
\end{itemize}
Assume that $k_K$ is algebraically closed. Then, any short exact sequence of $\sigma$-modules splits (\cite[14.3.4, 14.6.6]{pde}). Moreover, let $M$ be a $\sigma$-module over $K$ such that $q^{\lambda}\in\mathbb{Q}$ for any Frobenius slope $\lambda$ of $M$. Then, $M$ admits a basis consisting of elements of the form $\varphi(v)=q^{\lambda} v$ (\cite[14.6.4]{pde}); we call $v$ a Frobenius eigenvector of slope $\lambda$. In this situation, for any $\sigma$-submodules $M'$ and $M''$ of $M$, we have $M'\subset M''$ if and only if any Frobenius eigenvector $v$ of $M'$ belongs to $M''$.

\subsection{Log-growth filtration}\label{subsec:lgfil}

Let $R$ be either $K[\![x]\!]_0$ ($K$ might be $\mathcal{E}$), $\mathcal{E}^{\dagger}$, $\mathcal{E}$, or $\mathcal{R}$. A $\nabla$-module over $R$ is a finite free $R$-module $M$ endowed with a connection, i.e., a $K$-linear map
\[
\nabla:M\to M\otimes_{R}\Omega^1_R=Mdx
\]
satisfying
\[
\nabla(am)=m\otimes da+a\nabla(m)
\]
for $a\in R$ and $m\in M$. A $(\sigma,\nabla)$-module over $R$ is a $\sigma$-module $(M,\varphi)$ over $R$ with a connection $\nabla$ such that the following diagram is commutative:
\[\xymatrix{
M\ar[r]^(.35){\nabla}\ar[d]^{\varphi}&M\otimes_R \Omega^1_R\ar[d]^{\varphi\otimes\sigma}\\
M\ar[r]^(.35){\nabla}&M\otimes_R \Omega^1_R.
}\]

\begin{enumerate}
\item[(I)] Special log-growth filtration (\cite[4.2]{CT})
\end{enumerate}

Let $M$ be a $(\sigma,\nabla)$-module of rank~$n$ over $K[\![x]\!]_0$. We define the space of horizontal sections of $M$ by
\[
V(M):=(M\otimes_{K[\![x]\!]_0}K\{x\})^{\nabla=0}
\]
and define the space of solutions of $M$ by
\begin{align*}
\Sol(M)&:=\Hom_{K[\![x]\!]_0}(M,K\{x\})^{\nabla=0}\\
&:=\{f\in \Hom_{K[\![x]\!]_0}(M,K\{x\});d(f(m))=(f\otimes \mathrm{id})(\nabla(m))\forall m\in M\}.\ (\text{see } \cite[p.~82]{pde})
\end{align*}
Both $V(M)$ and $\Sol(M)$ are known to be $K$-vector spaces of dimension $n$, and there exists a perfect pairing
\[
V(M)\otimes_K \Sol(M)\to K
\]
induced by the canonical pairing $M\otimes_{K[\![x]\!]_0}M\spcheck\to K[\![x]\!]_0$, where $M\spcheck$ denotes the dual of $M$. For $\lambda\in\mathbb{R}$, we define
\[
\Sol_{\lambda}(M):=\Hom_{K[\![x]\!]_0}(M,K[\![x]\!]_{\lambda})\cap \Sol(M),
\]
which induces an increasing filtration of $\Sol(M)$. We say that $M$ is solvable in $K[\![x]\!]_{\lambda}$ if $\dim_K\Sol_{\lambda}(M)=n$. We define
\[
V(M)^{\lambda}:=\Sol_{\lambda}(M)^{\perp},
\]
where $(\cdot)^{\perp}$ denotes the orthogonal space with respect to the above pairing. We call the decreasing filtration $\{V(M)^{\lambda}\}_{\lambda}$ the special log-growth filtration of $M$. Note that $\Sol_{\bullet}(M)$ and $V(M)^{\bullet}$ are exhaustive and separated. Moreover, $V(M)^{\lambda}$ (resp. $\Sol_{\lambda}(M)$) is a $\sigma$-submodule of $V(M)$ (resp. $\Sol(M)$) (\cite[4.8]{CT}).

\begin{example}\label{ex:sol}
Let
\[
Dy=\frac{d^ny}{dx^n}+a_{n-1}\frac{d^{n-1}y}{dx^{n-1}}+\dots+a_0y=0,\ a_i\in K[\![x]\!]_0
\]
be an ordinary linear $p$-adic differential equation. As in the introduction, we define
\[
\Sol(D):=\{y\in K[\![x]\!];Dy=0\}\supset\Sol_{\lambda}(D):=\{y\in K[\![x]\!]_{\lambda};Dy=0\}.
\]
We define a $\nabla$-module $M:=K[\![x]\!]_0e_0\oplus\dots\oplus K[\![x]\!]_0e_{n-1}$ by
\[
\nabla(e_i)=
\begin{cases}
e_{i+1}dx&\text{if }0\le i\le n-2\\
-(a_{n-1}e_{n-1}+\dots+a_0e_0)dx&\text{if }i=n-1.
\end{cases}
\]
Then, we have the canonical isomorphism
\[
\Sol(M)\to\Sol(D);f\mapsto f(e_0),
\]
under which we have
\[
\Sol_{\lambda}(M)=\Sol_{\lambda}(D).
\]
\end{example}

\begin{enumerate}
\item[(II)] Generic log-growth filtration (\cite[\S~4.1]{CT})
\end{enumerate}

Let $M$ be a $(\sigma,\nabla)$-module over $\mathcal{E}$. We denote by $\tau^*M$ the pull-back of $M$ under $\tau:\mathcal{E}\to\mathcal{E}_t[\![X-t]\!]_0$, which is a $(\sigma,\nabla)$-module over $\mathcal{E}_t[\![X-t]\!]_0$. By a theorem of Robba, there exists a unique $(\sigma,\nabla)$-submodule $M^{\lambda}$ of $M$ for $\lambda\in\mathbb{R}$ characterized as a minimal $(\sigma,\nabla)$-submodule of $M$ such that $\tau^*(M/M^{\lambda})$ is solvable in $\mathcal{E}_t[\![X-t]\!]_{\lambda}$ (\cite[4.1]{CT}). We call the decreasing filtration $\{M^{\lambda}\}_{\lambda\in\mathbb{R}}$ of $M$ the log-growth filtration of $M$. Note that $M^{\bullet}$ is exhaustive and separated, and if $M\neq 0$, then $M^{\lambda}\neq M$ for $\lambda\in\mathbb{R}_{\ge 0}$.

There exists a dual version of the log-growth filtration: for $\lambda\in\mathbb{R}$, we set $M_{\lambda}:=((M\spcheck)^{\lambda})^{\perp}$, where $(\cdot)^{\perp}$ denotes the orthogonal space with respect to the canonical pairing $M\otimes_{\mathcal{E}}M\spcheck\to\mathcal{E}$. Then, $M_{\lambda}$ is a maximal $(\sigma,\nabla)$-submodule of $M$ such that $\tau^*M_{\lambda}$ is solvable in $\mathcal{E}_t[\![X-t]\!]_{\lambda}$. Note that if $M\neq 0$, then $M_{\lambda}\neq 0$ for $\lambda\in\mathbb{R}_{\ge 0}$ by $(M\spcheck)^{\lambda}\neq M\spcheck$.

Note that the Frobenius slope filtration of $M$ is stable under the action of $\nabla$ (\cite[6.2]{CT}).

\begin{dfn}
Let $M$ be a $(\sigma,\nabla)$-module over $K[\![x]\!]_0$.
\begin{enumerate}
\item The Frobenius slope filtration $S_{\bullet}(V(M))$ of $V(M)$ is called the special Frobenius filtration of $M$ (\cite[6.7]{CT}). We call a Frobenius slope of $V(M)$ a special Frobenius slope of $M$.
\item We put $M_{\mathcal{E}}:=\mathcal{E}\otimes_{K[\![x]\!]_0}M$, which is a $(\sigma,\nabla)$-module over $\mathcal{E}$. The Frobenius slope filtration $S_{\bullet}(M_{\mathcal{E}})$ of $M_{\mathcal{E}}$ is called the generic Frobenius filtration of $M$ (\cite[6.1]{CT}). We call a Frobenius slope of $M_{\mathcal{E}}$ a generic Frobenius slope of $M$.
\end{enumerate}
\end{dfn}

\subsection{Chiarellotto-Tsuzuki's conjectures}\label{subsec:CT}

In \cite[Concluding Remarks]{Dwob}, Dwork observes that the log-growth and Frobenius slope filtrations can be compared. To formulate conjectures based on his observation, Chiarellotto and Tsuzuki introduce the following technical conditions:

\begin{dfn}
\begin{enumerate}
\item (\cite[6.1]{CT2}) Let $M$ be a $(\sigma,\nabla)$-module over $\mathcal{E}$. We say that $M$ is pure of bounded quotient (PBQ for short) if $M/M^0$ is pure as a $\sigma$-module.
\item (\cite[5.1]{CT2}) Let $M$ be a $(\sigma,\nabla)$-module over $K[\![x]\!]_0$. We say that $M$ is PBQ if $M_{\mathcal{E}}$ is PBQ. We say that $M$ is horizontal of bounded quotient (HBQ for short) if there exists a quotient $\overline{M}$ of $M$ as a $(\sigma,\nabla)$-module such that there exists a canonical isomorphism $\overline{M}_{\mathcal{E}}\cong M_{\mathcal{E}}/M_{\mathcal{E}}^0$. Finally, we say that $M$ is horizontally pure of bounded quotient (HPBQ for short) if $M$ is PBQ and HBQ.
\end{enumerate}
\end{dfn}

The following conjectures are first formulated by Chiarellotto and Tsuzuki in \cite[\S~6.4]{CT}. In this paper, we use the equivalent forms in \cite{CT2}.

\begin{conj}[{the conjecture ${\bf LGF}_{K[\![x]\!]_0}$ (\cite[2.5]{CT2})}]\label{conj:sp}
Let $M$ be a $(\sigma,\nabla)$-module over $K[\![x]\!]_0$.
\begin{enumerate}
\item The special log-growth filtration of $M$ is rational and right continuous.
\item Let $\lambda_{\max}$ be the highest Frobenius slope of $M_{\mathcal{E}}$. If $M$ is PBQ, then we have
\[
V(M)^{\lambda}=(S_{\lambda-\lambda_{\max}}(V(M\spcheck)))^{\perp}
\]
for all $\lambda\in\mathbb{R}$. Here, $(\cdot)^{\perp}$ denotes the orthogonal space with respect to the canonical pairing $V(M)\otimes_K V(M\spcheck)\to K$.
\end{enumerate}
\end{conj}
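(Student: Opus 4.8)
The plan is to deduce both parts of Conjecture~\ref{conj:sp} from a fine analysis of the log-growth of solutions of scalar Frobenius equations, extending the cyclic-vector strategy of Chiarellotto--Tsuzuki and Nakagawa. First I would reduce part (i) to a one-variable statement. Since $\Sol_{\bullet}(M)$ and $V(M)^{\bullet}$ are dual exhaustive and separated filtrations and each $V(M)^{\lambda}$ (resp.\ $\Sol_{\lambda}(M)$) is a $\sigma$-submodule of $V(M)$ (resp.\ $\Sol(M)$), after enlarging $K$ so that $k_K$ is algebraically closed and all relevant Frobenius slopes lie in $\Q$, the filtration $V(M)^{\bullet}$ is governed by the log-growth of the Frobenius eigenvectors $v\in V(M)$. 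Fixing a cyclic vector $e$ of $M$ as a $\sigma$-module over the fraction field of $K[\![x]\!]_0$ and writing such an eigenvector as $v=f_0e+f_1\varphi(e)+\dots+f_{n-1}\varphi^{n-1}(e)$, the top coefficient $f:=f_{n-1}$ satisfies a Frobenius equation $b_nf^{\sigma^n}+\dots+b_0f=0$ with $b_i\in K[\![x]\!]_0$, and $v$ and $f$ have the same log-growth as $\rho\uparrow 1$. Thus (i) reduces to showing that the log-growth of $f$ is a well-defined rational number attained exactly, the exactness giving right continuity.

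The difficulty is that Nakagawa's computation of the log-growth of such an $f$ succeeds only when the Newton polygon of $B(X)=b_nX^n+\dots+b_0$ over $\mathcal{E}$ has the maximal number $n$ of breaks, that is, every point plotted in its definition lies on it; for a general cyclic vector this fails, and a naive attempt to remove the hypothesis breaks down. To repair this I would, in \S\ref{sec:gen}, pass via a base change described in Kedlaya's framework of analytic rings (enlarging $K$ to $K^{\sigma,\ur}$ and replacing $\mathcal{E}$ by the relevant analytic ring) to a setting where the Frobenius slope filtration of $M$ splits, and there choose a new cyclic vector $e$ for which \emph{all} the plotted points of the associated $B(X)$ lie on its Newton polygon. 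Because this good $e$ lives over an analytic ring rather than over $K[\![x]\!]_0$, I then need, in \S\ref{sec:lg}, a notion of log-growth --- and of the subspaces playing the role of $K[\![x]\!]_{\lambda}$ --- over these analytic rings, together with a comparison recovering the classical notion after restriction to $K[\![x]\!]_0$.

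With this in place, in \S\ref{sec:Frob} I would generalize Nakagawa's estimate: solving $B(X)f=0$ coefficient by coefficient and tracking $\nab{f}{\rho}$ via the slopes of the now non-degenerate Newton polygon of $B(X)$, one gets $\nab{f}{\rho}=\RO{-\lambda_0}$ with $\lambda_0\in\Q$ determined explicitly by the Newton polygon and with no better bound, yielding both rationality and exact attainment. In \S\ref{sec:pf} I would assemble these facts to conclude that $V(M)^{\bullet}$ is rational and right continuous, which is Conjecture~\ref{conj:sp}~(i), equivalently Conjecture~A and Theorem~\ref{thm:main}~(i). For part (ii), assuming $M$ is PBQ, I would combine the above with the horizontality of the Frobenius slope filtration, a Dwork-type bound relating the log-growth of the graded pieces of the special Frobenius filtration to the differences $\lambda_{\max}-\lambda$, and the PBQ hypothesis --- which is exactly what forbids log-growth beyond the Frobenius contribution --- to identify $V(M)^{\lambda}$ with $(S_{\lambda-\lambda_{\max}}(V(M\spcheck)))^{\perp}$.

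The step I expect to be the main obstacle is the construction in \S\ref{sec:gen} of a cyclic vector whose associated scalar Frobenius equation has a non-degenerate Newton polygon: this forces one to work over analytic rings, to use the splitting of the slope filtration there, and to control carefully how the chosen basis interacts with the plotted points, a naive generalization of Nakagawa's set-up being insufficient. A secondary difficulty is setting up the log-growth formalism over analytic rings so that it is genuinely compatible with the classical one over $K[\![x]\!]_0$.
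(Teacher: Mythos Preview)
Your plan for part~(i) is essentially the paper's own argument: pass from $K[\![x]\!]_0$ to $\Gamma_{\con}[p^{-1}]=\mathcal{E}^{\dagger}$ (Lemma~\ref{lem:bc}), construct over $\Gamma^{\alg}_{\con}[p^{-1}]$ a \emph{generic} cyclic vector of $M\spcheck$ whose associated twisted polynomial satisfies condition~$(*)$ that every plotted point lies on its Newton polygon (Theorem~\ref{thm:cyc}), set up a log-growth filtration on the analytic rings $\Gamma^{\bullet}_{\log,\an,\con}$ compatible with $K[\![x]\!]_{\lambda}$ (\S\ref{sec:lg}), and generalize Nakagawa's estimate (Theorem~\ref{thm:Frob}) to show that the top coefficient of a Frobenius eigenvector is either bounded or exactly of log-growth $s+s_j$ for some generic Frobenius slope $s_j$. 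Rationality and right continuity then follow as in the proof of Theorem~\ref{thm:main2}~(i). Your identification of the generic cyclic vector construction as the main obstacle is accurate.

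There is, however, a genuine gap in your treatment of part~(ii). The paper does \emph{not} prove Conjecture~\ref{conj:sp}~(ii) in full generality; Theorem~\ref{thm:main}~(ii) establishes it only under the extra hypothesis that $M_{\mathcal{E}}$ has at most two Frobenius slopes. The obstruction is precisely the ambiguity left by Theorem~\ref{thm:Frob}: for a Frobenius eigenvector $v\in\mathfrak{Sol}(M)$ of slope $s$, one obtains $v\in\mathfrak{Sol}_{s+s_j}(M)\setminus\mathfrak{Sol}_{(s+s_j)-}(M)$ for \emph{some} $j$, but the method does not determine which $j$. The inclusion $\mathfrak{V}(M)^{\lambda}\subset(S_{\lambda-\lambda_{\max}}(\mathfrak{V}(M\spcheck)))^{\perp}$ (Proposition~\ref{prop:basic}~(i), your ``Dwork-type bound'') uses only $s_j\le s_k=\lambda_{\max}$ and holds without PBQ; the reverse inclusion would require $s_j=\lambda_{\max}$. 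The PBQ hypothesis, via Lemma~\ref{lem:PBQ}, pins this down only at $\lambda=0$ (Proposition~\ref{prop:basic}~(ii)). When $k\le 2$, Grothendieck--Katz forces $s+s_1\le 0$, so necessarily $j=2$ and the argument closes; for $k\ge 3$ it does not, as the paper notes in Remark~\ref{rem:PBQ}. Your description of PBQ as ``what forbids log-growth beyond the Frobenius contribution'' inverts the direction of the missing inclusion and does not resolve this ambiguity.
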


\begin{conj}[{the conjecture ${\bf LGF}_{\mathcal{E}}$ (\cite[2.4]{CT2})}]\label{conj:gen}
Let $M$ be a $(\sigma,\nabla)$-module over $\mathcal{E}$.
\begin{enumerate}
\item The log-growth filtration of $M$ is rational and right continuous.
\item Let $\lambda_{\max}$ be the highest Frobenius slope of $M$. If $M$ is PBQ, then we have
\[
M^{\lambda}=(S_{\lambda-\lambda_{\max}}(M\spcheck))^{\perp}
\]
for all $\lambda\in\mathbb{R}$. Here, $(\cdot)^{\perp}$ denotes the orthogonal space with respect to the canonical pairing $M\otimes_{\mathcal{E}} M\spcheck\to \mathcal{E}$.
\end{enumerate}
\end{conj}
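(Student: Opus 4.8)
\medskip
\noindent\textbf{Proof proposal.}
I would prove Conjecture~\ref{conj:gen} by transporting it, via Dwork's generic point, to Conjecture~\ref{conj:sp} (the Main Theorem). Let $M$ be a $(\sigma,\nabla)$-module over $\mathcal{E}$ and put $N:=\tau^{*}M$, a $(\sigma,\nabla)$-module over $\mathcal{E}_{t}[\![X-t]\!]_{0}$. Since $\mathcal{E}_{t}$ is again a complete discrete valuation field of characteristic $(0,p)$ carrying a $q$-Frobenius, $\mathcal{E}_{t}[\![X-t]\!]_{0}$ is a coefficient ring of the type ``$K'[\![y]\!]_{0}$'' (with $K'=\mathcal{E}_{t}$, $y=X-t$): the entire formalism of \S\ref{subsec:lgfil} — the spaces $V(N)$ and $\Sol(N)$, their perfect pairing, the special log-growth filtration $V(N)^{\bullet}$, the special Frobenius filtration $S_{\bullet}(V(N))$ — is available, and the statement of Conjecture~\ref{conj:sp} makes sense for $N$ and is covered by the Main Theorem. (Equivalently, the argument of \S\ref{sec:gen}--\S\ref{sec:pf} runs with $K=\mathcal{E}_{t}$ and $M=N$.)

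The first step is the dictionary between $M$ and $N$. By Robba's theorem the solutions of $N=\tau^{*}M$ converge on the whole disc $|X-t|<1$, so $M'\mapsto V(\tau^{*}M')$ is a Frobenius-compatible equivalence from $(\sigma,\nabla)$-submodules of $M$ to $\sigma$-submodules of $V(N)$; using $\Sol(\tau^{*}(M/M'))=V(\tau^{*}M')^{\perp}$ and $V(N)^{\lambda}=\Sol_{\lambda}(N)^{\perp}$, the defining property of $M^{\lambda}$ (minimal with $\tau^{*}(M/M^{\lambda})$ solvable in $\mathcal{E}_{t}[\![X-t]\!]_{\lambda}$) unwinds to $M^{\lambda}\leftrightarrow V(N)^{\lambda}$ for every $\lambda$ (essentially \cite[4.1]{CT}). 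Hence $M^{\bullet}$ and $V(N)^{\bullet}$ have the same breaks, multiplicities, rationality, and right-continuity; $S_{\bullet}(M\spcheck)$ corresponds to $S_{\bullet}(V(N\spcheck))$; the generic Frobenius slopes of $N$ coincide with the Frobenius slopes of $M$ by compatibility of Dwork's generic point with Frobenius slopes (\cite{CT}), so the two ``$\lambda_{\max}$''s appearing in Conjecture~\ref{conj:sp}(ii) for $N$ and Conjecture~\ref{conj:gen}(ii) for $M$ agree; and $N$ is PBQ — indeed HPBQ, with $N\twoheadrightarrow\tau^{*}(M/M^{0})$ as bounded quotient — if and only if $M$ is PBQ. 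Granting all of this, Conjecture~\ref{conj:gen}(i) for $M$ is Conjecture~\ref{conj:sp}(i) for $N$, and Conjecture~\ref{conj:gen}(ii) for a PBQ $M$ is Conjecture~\ref{conj:sp}(ii) for the HPBQ $N$, i.e.\ Theorem~\ref{thm:main}(ii); the orthogonality statements pass through the identifications of the pairings.

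Thus everything rests on the Main Theorem for $N$, which for part~(i) is unconditional. Its mechanism in this setting is: pass to the unramified extension of $\mathcal{E}_{t}$ with algebraically closed residue field (harmless, since solvability and the log-growth filtration are insensitive to it and the identification $M^{\lambda}\leftrightarrow V(N)^{\lambda}$ descends), so that $V(N)$ has a basis of Frobenius eigenvectors; expand a Frobenius eigenvector $v\in V(N)$ along a cyclic vector $e$ chosen, over Kedlaya's analytic rings (\S\ref{sec:gen}), so that every plotted point of the Frobenius operator $b_{n}X^{n}+\dots+b_{0}$ attached to $e$ lies on its Newton polygon; read off the Frobenius equation $b_{n}f^{\sigma^{n}}+\dots+b_{0}f=0$ satisfied by the top coefficient $f$ of $v$; and apply the generalization of Nakagawa's estimate (\S\ref{sec:Frob}) to conclude $\nab{f}{\rho}=\RO{-\lambda}$ with rational $\lambda$ and no better bound. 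This gives the rationality and right continuity of $V(N)^{\bullet}$ and, under PBQ, the comparison $V(N)^{\lambda}=(S_{\lambda-\lambda_{\max}}(V(N\spcheck)))^{\perp}$.

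The main obstacle is the one already flagged for the Main Theorem: producing the cyclic vector whose Frobenius operator has all plotted points on the Newton polygon, i.e.\ dispensing with Nakagawa's hypothesis that this polygon has $n$ distinct breaks — without it the log-growth of $f$ is not governed by the polygon's slopes, and the construction must be carried out over Kedlaya's analytic rings, which forces one to introduce a suitable notion of log-growth there (\S\ref{sec:lg}) and redo Nakagawa's calculation in that generality. On the reduction side the difficulties are soft but must be pinned down, since Conjecture~\ref{conj:sp}(ii) is invoked as a black box: one has to verify that $M'\mapsto V(\tau^{*}M')$ matches the generic Frobenius slope data on $M$ with the special Frobenius slope data on $V(N)$, that the two normalizations of $\lambda_{\max}$ genuinely coincide, and that PBQ/HPBQ is preserved — any slippage in $\lambda_{\max}$ or in the direction of the pairing being fatal to part~(ii).
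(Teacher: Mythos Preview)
The paper does not prove Conjecture~\ref{conj:gen}: immediately after stating it, the paper records it as already established by Chiarellotto and Tsuzuki (\cite[Theorems~7.1, 7.2]{CT2}) and then turns to the remaining special conjecture $\mathbf{LGF}_{K[\![x]\!]_0}$. So there is no proof in the paper to compare against; your proposal is an alternative argument that runs the reduction in the opposite direction from \cite{CT2}, feeding the special case back into the generic one via $N=\tau^*M$.

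For part~(i) this is reasonable. The identification $M^{\lambda}\leftrightarrow V(N)^{\lambda}$ is essentially the definition of the generic log-growth filtration, and Theorem~\ref{thm:main}~(i), applied with $\mathcal{E}_t$ in the role of the base field $K$, yields rationality and right continuity of $V(N)^{\bullet}$, hence of $M^{\bullet}$. That would be a genuinely different derivation of $\mathbf{LGF}_{\mathcal{E}}$~(i) from the one in \cite{CT2}.

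For part~(ii), however, there is a real gap. You assert that Conjecture~\ref{conj:sp}~(ii) for the HPBQ module $N$ is ``i.e.\ Theorem~\ref{thm:main}~(ii)'', but Theorem~\ref{thm:main}~(ii) only establishes $\mathbf{LGF}_{K[\![x]\!]_0}$~(ii) when the number of generic Frobenius slopes is at most two; it does not cover general HPBQ modules, and Conjecture~\ref{conj:sp}~(ii) in full remains open in this paper. The HPBQ case you need is Theorem~\ref{thm:CT}~(iv), i.e.\ \cite[Theorem~6.5]{CT2}; if that is what you intend to invoke, you are deducing one \cite{CT2} theorem from another and must check there is no circularity with \cite{CT2}'s proof of $\mathbf{LGF}_{\mathcal{E}}$. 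Either way, the Main Theorem of this paper does not suffice to conclude part~(ii) along your route. A secondary loose end: verifying that $N=\tau^*M$ is PBQ requires identifying $(N_{\mathcal{E}'})^0$ (for $\mathcal{E}'$ the Amice ring over $\mathcal{E}_t$) with the base change of $M^0$, i.e.\ compatibility of the generic log-growth filtration under the further extension $\mathcal{E}\hookrightarrow\mathcal{E}'$, which you have not addressed.
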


To prove Chiarellotto-Tsuzuki's conjectures, we may assume that $k_K$ is algebraically closed as remarked in \cite[p.~42]{CT2}. In the following, we recall known results on Chiarellotto-Tsuzuki's conjectures.

\begin{thm}[{\cite[Theorem~7.1, 7.2]{CT2}}]
The conjecture ${\bf LGF}_{\mathcal{E}}$ is true.
\end{thm}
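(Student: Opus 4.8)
The plan is to compare the log-growth filtration $M^{\bullet}$ of $M$ with its Frobenius slope filtration $S_{\bullet}(M)$, whose rationality and right continuity are among the basic properties recalled in \S~\ref{subsec:phi}. By the reduction recalled in \S~\ref{subsec:CT} one may assume $k_{K}$ algebraically closed; since replacing $\sigma$ by a power $\sigma^{m}$ changes neither filtration, one may also assume $q^{\lambda}\in\Q$ for every Frobenius slope $\lambda$ of $M$, and for each such $\lambda$ fix the constant rank-one $(\sigma,\nabla)$-module $L_{\lambda}$ with $\varphi=q^{\lambda}$ and $\nabla=0$.

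The decisive input is the boundedness statement: \emph{if $M$ is pure as a $\sigma$-module, then $\tau^{*}M$ is solvable in $\mathcal{E}_{t}[\![X-t]\!]_{0}$.} I would prove this by twisting by $L_{-\lambda}$, which turns $M$ into an \'etale (unit-root) $(\sigma,\nabla)$-module and modifies horizontal sections only by constants; it then suffices to treat the unit-root case, where a $\varphi$-stable lattice gives a Frobenius matrix in $\mathrm{GL}_{n}(\mathcal{O}_{\mathcal{E}})$. The Frobenius structure relates the solution matrix $Y$ of $\tau^{*}M$ normalized by $Y|_{X=t}=\mathrm{id}$ to its specialization near the center $X=t$ via a functional equation with invertible integral coefficients, and a Dwork-type successive-approximation argument, using that $\sigma$ contracts the maximal ideal of $\mathcal{E}_{t}[\![X-t]\!]_{0}$, forces $Y$ to remain bounded as one approaches the boundary. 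A d\'evissage along the slope filtration, whose short exact sequences of pure pieces split because $k_{K}$ is algebraically closed, then upgrades this to: \emph{a $(\sigma,\nabla)$-module over $\mathcal{E}$ all of whose Frobenius slopes lie in an interval of length $\lambda$ is solvable in $\mathcal{E}_{t}[\![X-t]\!]_{\lambda}$.}

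Granting this, one inclusion is immediate: since $M/S_{\lambda_{\max}-}(M)$ is pure of slope $\lambda_{\max}$, hence solvable in $\mathcal{E}_{t}[\![X-t]\!]_{0}$, the minimality of $M^{0}$ gives $M^{0}\subseteq S_{\lambda_{\max}-}(M)$, and the interval-length-$\lambda$ version gives $M^{\lambda}\subseteq S_{(\lambda_{\max}-\lambda)-}(M)$ for $\lambda\ge 0$. If $M$ is PBQ, then $M/M^{0}$ is pure, so every subquotient of $M/M^{0}$ has slope $\lambda_{\max}$; but $S_{\lambda_{\max}-}(M)/M^{0}$ is such a subquotient and is also a subquotient of $S_{\lambda_{\max}-}(M)$, all of whose slopes are $<\lambda_{\max}$, so it vanishes and $M^{0}=S_{\lambda_{\max}-}(M)$. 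One then checks that $S_{\lambda_{\max}-}(M)$ is again PBQ, with top slope the next Frobenius slope of $M$, and an induction on the rank yields $M^{\lambda}=S_{(\lambda_{\max}-\lambda)-}(M)$ for all $\lambda$. Unwinding the description of the orthogonal of the slope filtration under the canonical pairing, this is exactly part~(ii) of Conjecture~\ref{conj:gen} for PBQ $M$, and the rationality and right continuity of $M^{\bullet}$ for PBQ $M$ follow from those of $S_{\bullet}(M)$. Part~(i) for general $M$ is then reduced to the PBQ case by an induction in which $M^{0}$ and suitable subquotients of the Frobenius slope filtration supply the PBQ pieces, rationality and right continuity propagating through the short exact sequences involved.

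The step I expect to be the main obstacle is the lower bound $M^{\lambda}\supseteq S_{(\lambda_{\max}-\lambda)-}(M)$ for PBQ $M$ --- equivalently, that a PBQ module actually attains the log-growth dictated by its Frobenius slopes, its generic solutions being no more bounded than the Frobenius structure permits. This does not follow from the soft d\'evissage that yields the upper bound; it requires a quantitative analysis of the solution matrix at Dwork's generic point together with the Frobenius functional equation (or a duality argument converting it into a boundedness statement of the kind established above), and it is the PBQ hypothesis --- not merely the slope data --- that makes this bound, combined with the upper bound, pin $M^{\bullet}$ down exactly. A secondary difficulty is the reduction of part~(i) to PBQ modules, since neither submodules nor quotients of a PBQ module are PBQ in general.
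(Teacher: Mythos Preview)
The paper does not prove this theorem; it is simply quoted from \cite[Theorem~7.1, 7.2]{CT2} with no argument given. There is therefore no proof in the present paper to compare your proposal against.

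As a sketch of the argument in \cite{CT2}, your outline is broadly accurate: the upper bound $M^{\lambda}\subseteq S_{(\lambda_{\max}-\lambda)-}(M)$ does come from the boundedness of solutions of pure (after twist, unit-root) $(\sigma,\nabla)$-modules, and the identification $M^{0}=S_{\lambda_{\max}-}(M)$ in the PBQ case proceeds essentially as you describe. You have also correctly isolated the two points where real work is needed. The lower bound for PBQ modules is indeed the heart of the matter in \cite{CT2}; your parenthetical suggestions (``quantitative analysis of the solution matrix'', ``duality argument'') gesture in the right direction but do not constitute a proof, and you honestly flag this. The reduction of part~(i) to the PBQ case is handled in \cite{CT2} not by an ad hoc induction but by constructing a canonical \emph{PBQ filtration} of an arbitrary $(\sigma,\nabla)$-module over $\mathcal{E}$ and showing that the log-growth filtration behaves well in the associated exact sequences; this is a structural result in its own right. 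Finally, the step in your induction where you ``check that $S_{\lambda_{\max}-}(M)$ is again PBQ'' is not automatic and is one of the lemmas that has to be established.
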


Hence, the remaining part of Chiarellotto-Tsuzuki's conjectures is the conjecture ${\bf LGF}_{K[\![x]\!]_0}$.

\begin{thm}\label{thm:CT}
Let $M$ be a $(\sigma,\nabla)$-module of rank~$n$ over $K[\![x]\!]_0$.
\begin{enumerate}
\item (\cite[Theorem~7.1~(2)]{CT}) The conjecture ${\bf LGF}_{K[\![x]\!]_0}$ is true if $n\le 2$.
\item (\cite[Theorem~8.7]{CT2}) The conjecture ${\bf LGF}_{K[\![x]\!]_0}$~(i) is true if $M$ is HBQ.
\item (\cite[Theorem~6.17]{CT}) For all $\lambda\in\mathbb{R}$, we have
\[
V(M)^{\lambda}\subset (S_{\lambda-\lambda_{\max}}(V(M\spcheck)))^{\perp}.
\]
\item (\cite[Theorem~6.5]{CT2}) The conjecture ${\bf LGF}_{K[\![x]\!]_0}$~(ii) is true if $M$ is HPBQ.
\item (\cite[Proposition~7.3]{CT2}) If the conjecture ${\bf LGF}_{K[\![x]\!]_0}$~(ii) is true for an arbitrary $M$, then the conjecture ${\bf LGF}_{K[\![x]\!]_0}$~(i) is true for an arbitrary $M$.
\end{enumerate} 
\end{thm}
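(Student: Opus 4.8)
Since Theorem~\ref{thm:CT} collects results of Chiarellotto--Tsuzuki, the plan is to recall each item from \cite{CT} and \cite{CT2}, adding a word on the mechanism where the statement is not an outright citation. For (i) I would invoke the rank~$2$ analysis of \cite[Theorem~7.1~(2)]{CT}: the Frobenius equation $b_2 f^{\sigma^2}+b_1 f^{\sigma}+b_0 f=0$ attached to a cyclic vector is solved by hand, and its solution is seen to have rational log-growth. For (iii) I would quote \cite[Theorem~6.17]{CT}, whose argument extracts one inclusion of the comparison from the compatibility of the special log-growth filtration with the functors $V(\cdot)$, duality and Frobenius twist, together with the fact that a $(\sigma,\nabla)$-sub-object cut out by a Frobenius slope condition automatically has horizontal sections of log-growth $0$.

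For (ii) and (iv) I would rely on \cite{CT2}, the common thread being that the \emph{generic} log-growth filtration is already rational and right continuous by the theorem ${\bf LGF}_{\mathcal{E}}$ recalled above, so that the task is to transport that information to the special filtration; the HBQ/PBQ hypotheses are precisely what legitimise the transport. Concretely, for (iv) I would upgrade the inclusion of (iii) to an equality: when $M$ is HPBQ, the quotient $\overline{M}$ realising $M_{\mathcal{E}}/M_{\mathcal{E}}^{0}$ produces enough horizontal sections of log-growth $\le\lambda$ to saturate the orthogonal complement $(S_{\lambda-\lambda_{\max}}(V(M\spcheck)))^{\perp}$, yielding the reverse inclusion; this is \cite[Theorem~6.5]{CT2}. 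For (ii) I would compare $V(M)^{\bullet}$ with the log-growth filtration of $M_{\mathcal{E}}$ --- which becomes possible once $M$ is HBQ --- and deduce rationality of the breaks of $V(M)^{\bullet}$ from that of the breaks of $M_{\mathcal{E}}$, as in \cite[Theorem~8.7]{CT2}.

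The formal reduction (v) is the part worth spelling out. If $M$ is PBQ, then ${\bf LGF}_{K[\![x]\!]_0}$~(ii) identifies $V(M)^{\lambda}$ with $(S_{\lambda-\lambda_{\max}}(V(M\spcheck)))^{\perp}$; since the Frobenius slope filtration of $V(M\spcheck)$ has rational breaks and is right continuous by \S~\ref{subsec:phi}, so is $V(M)^{\bullet}$, and (i) holds for $M$. For arbitrary $M$ I would pass to the maximal PBQ quotient $M^{\mathrm{PBQ}}$ of \cite{CT2} and check that $M$ and $M^{\mathrm{PBQ}}$ share the same log-growth breaks with multiplicities, so that rationality and right continuity descend from the PBQ case just treated. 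I expect the genuine obstacle to lie here rather than in any single estimate: one must control how the log-growth breaks behave in the exact sequence $0\to N\to M\to M^{\mathrm{PBQ}}\to 0$ defining $M^{\mathrm{PBQ}}$, i.e. verify that $N$ contributes no new break --- which is exactly \cite[Proposition~7.3]{CT2}. Everything else reduces to a direct appeal to \cite{CT} and \cite{CT2}.
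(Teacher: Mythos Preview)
The paper gives no proof of Theorem~\ref{thm:CT}: each of the five items is stated as a direct citation to \cite{CT} or \cite{CT2}, and the theorem serves purely as a summary of prior results. Your proposal therefore goes well beyond what the paper does, supplying informal sketches of the mechanisms behind the cited results; nothing in the paper corresponds to these sketches, so there is nothing to compare.

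That said, one small correction to your sketch of~(v): the reduction in \cite[Proposition~7.3]{CT2} does not proceed via a single ``maximal PBQ quotient'' $M^{\mathrm{PBQ}}$ together with an exact sequence $0\to N\to M\to M^{\mathrm{PBQ}}\to 0$. Rather, \cite{CT2} constructs a PBQ \emph{filtration} of $M$---an increasing filtration whose successive quotients are PBQ---and the argument for (v) is an induction on the length of this filtration, using that the log-growth filtration behaves well in short exact sequences of $(\sigma,\nabla)$-modules. Your description conflates the first step of this induction with the whole argument. This does not affect the overall logic of your proposal, since you correctly defer the substance to the cited proposition.
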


\subsection{Main theorem}

Our main result of this paper is
\begin{thm}\label{thm:main}
\begin{enumerate}
\item The conjecture ${\bf LGF}_{K[\![x]\!]_0}$~(i) is true for an arbitrary $M$.
\item The conjecture ${\bf LGF}_{K[\![x]\!]_0}$~(ii) is true if the number of Frobenius slopes of $M_{\mathcal{E}}$ is less than or equal to $2$.
\end{enumerate}
\end{thm}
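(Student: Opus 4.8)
The plan is to deduce Theorem~\ref{thm:main}~(i) from Theorem~\ref{thm:main}~(ii) via Theorem~\ref{thm:CT}~(v), so the real content is part~(ii): proving the conjecture ${\bf LGF}_{K[\![x]\!]_0}$~(ii) when $M_{\mathcal{E}}$ has at most two Frobenius slopes. Actually, a cleaner route is to prove directly the rationality and right continuity of $\Sol_{\bullet}(M)$ (i.e. part~(i)) using the strategy outlined in the introduction, and then feed this into Chiarellotto--Tsuzuki's machinery; but since part~(ii) formally implies part~(i) by Theorem~\ref{thm:CT}~(v) once part~(ii) is known in general, and since the hypothesis in part~(ii) is genuinely restrictive, I will treat the two parts separately. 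For part~(i): after base change we may assume $k_K$ is algebraically closed. The key step is the choice of cyclic vector from \S~\ref{sec:gen}: passing through Kedlaya's analytic rings, one selects a cyclic vector $e$ for $M$ (as a $\sigma$-module over the relevant fraction field) so that every point plotted in the definition of the Newton polygon of the associated twisted polynomial $b_nX^n+\dots+b_0$ actually lies on the Newton polygon. This is the device that removes Nakagawa's hypothesis that the Newton polygon has exactly $n$ breaks.

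With such an $e$ in hand, let $v\in V(M)$ be a Frobenius eigenvector, write $v=\sum_{i=0}^{n-1} f_i\,\varphi^i(e)$, and let $f=f_{n-1}$ be the coefficient of $\varphi^{n-1}(e)$; then $f$ satisfies the Frobenius equation $b_nf^{\sigma^n}+\dots+b_0f=0$ over Kedlaya's ring. The next step is to set up a theory of log-growth for functions on this ring (\S~\ref{sec:lg}): one needs the analogue of the Banach spaces $K[\![x]\!]_\lambda$, the characterization of log-growth $\lambda$ in terms of the asymptotics of $\nab{\cdot}{\rho}$, stability under $d/dx$ and $\sigma$, and compatibility with the norms $\nab{\cdot}{\rho}$. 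Then in \S~\ref{sec:Frob} one generalizes Nakagawa's calculation: from the Newton polygon of $b_nX^n+\dots+b_0$ — now under control thanks to the choice of $e$ — one reads off, term by term, that $f$ has a well-defined rational log-growth $\lambda_f\in\mathbb{Q}$, meaning $\nab{f}{\rho}=\RO{-\lambda_f}$ and $\nab{f}{\rho}\neq\RO{-\mu}$ for $\mu<\lambda_f$. Since the breaks of $\Sol_\bullet(M)$ (equivalently of $V(M)^\bullet$) are exactly the log-growths of the Frobenius eigenvectors of $V(M)$, which correspond to the various $f$'s arising this way, rationality of the breaks follows; right continuity, $\Sol_\lambda(M)=\cap_{\mu>\lambda}\Sol_\mu(M)$, follows because there are only finitely many breaks and each is attained. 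This proves part~(i) in \S~\ref{sec:pf}.

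For part~(ii), one uses the additional structure available when $M_{\mathcal{E}}$ has $\le 2$ Frobenius slopes. By Theorem~\ref{thm:CT}~(iv) it suffices to show $M$ is HPBQ, or more precisely to exhibit the HBQ quotient; since $M$ being PBQ reduces to an assertion about $M/M_{\mathcal{E}}^0$, and with only two slopes the generic Frobenius filtration $S_\bullet(M_{\mathcal{E}})$ has a single break above $0$, one can produce the required quotient $\overline{M}$ of $M$ as a $(\sigma,\nabla)$-module realizing $M_{\mathcal{E}}/M_{\mathcal{E}}^0$ directly from the slope filtration, using that short exact sequences of $\sigma$-modules split over an algebraically closed residue field and that the Frobenius slope filtration is $\nabla$-stable (\cite[6.2]{CT}). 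Then Theorem~\ref{thm:CT}~(iv) gives the comparison $V(M)^\lambda=(S_{\lambda-\lambda_{\max}}(V(M\spcheck)))^\perp$, which is exactly ${\bf LGF}_{K[\![x]\!]_0}$~(ii).

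The main obstacle is the construction of the cyclic vector $e$ with all Newton-polygon points on the boundary, together with setting up just enough log-growth theory over Kedlaya's analytic rings to run Nakagawa's estimate there; the base change to Kedlaya's framework is what makes this possible, and verifying that log-growth is preserved under all the descent/ascent steps between $K[\![x]\!]_0$, $\mathcal{E}$, and Kedlaya's ring will require care. By contrast, the HBQ argument in part~(ii) is comparatively soft, being essentially linear algebra of $\sigma$-modules once the two-slope hypothesis is in force.
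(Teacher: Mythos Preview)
Your outline for part~(i) is essentially the paper's approach: reduce to the algebraically closed residue field case, pass to $\Gamma_{\con}[p^{-1}]$ via Lemma~\ref{lem:bc}, choose a generic cyclic vector (Theorem~\ref{thm:cyc}) so that the associated twisted polynomial satisfies condition~$(*)$, and then invoke the Frobenius-equation estimate (Theorem~\ref{thm:Frob}) to see that each Frobenius eigenvector in $\mathfrak{Sol}(M)$ has log-growth lying in a finite set of rational numbers. One small correction: the cyclic vector lives in $M\spcheck$ (more precisely in $\Gamma^{\alg}_{\con}[p^{-1}]\otimes M\spcheck$), and the eigenvectors $v$ you analyze lie in $\mathfrak{Sol}(M)\cong\mathfrak{V}(M\spcheck)$, not in $V(M)$; but this does not affect the argument.

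Your proposal for part~(ii), however, has a genuine gap. You want to show that $M$ is HPBQ whenever $M_{\mathcal{E}}$ has at most two Frobenius slopes, and then invoke Theorem~\ref{thm:CT}~(iv). The problem is the HBQ step: you need a quotient $\overline{M}$ of $M$ \emph{over $K[\![x]\!]_0$} with $\overline{M}_{\mathcal{E}}\cong M_{\mathcal{E}}/M_{\mathcal{E}}^0$. Your suggested construction --- using the Frobenius slope filtration of $M_{\mathcal{E}}$ and the splitting of short exact sequences of $\sigma$-modules --- produces objects over $\mathcal{E}$, not over $K[\![x]\!]_0$. Neither the generic slope filtration nor the log-growth piece $M_{\mathcal{E}}^0$ is known to descend to $K[\![x]\!]_0$ in general, and the splitting of $\sigma$-module sequences (which holds over $K$ or $\mathcal{E}$ with algebraically closed residue field) gives no control at the integral level $K[\![x]\!]_0$. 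So the argument, as written, does not establish HBQ.

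The paper does \emph{not} attempt to prove HPBQ. Instead, it proves part~(ii) by the same Frobenius-equation mechanism used for part~(i): working over $\Gamma_{\con}[p^{-1}]$, one already has the inclusion $\mathfrak{V}(M)^{\lambda}\subset (S_{\lambda-\lambda_{\max}}(\mathfrak{V}(M\spcheck)))^{\perp}$ from Proposition~\ref{prop:basic}~(i), so it suffices to show $\mathfrak{Sol}_{\lambda}(M)\subset S_{\lambda-\lambda_{\max}}(\mathfrak{Sol}(M))$. Take a Frobenius eigenvector $v\in\mathfrak{Sol}_{\lambda}(M)$ of slope $s$. Lemma~\ref{lem:CT}~(ii) says that either $v\in\mathfrak{Sol}_0(M)$, or $v$ is exactly of log-growth $s+s_j$ for some generic slope $s_j$. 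When there are at most two generic slopes $s_1<s_2=\lambda_{\max}$, Grothendieck--Katz forces $-s_2\le s\le -s_1$, so the only possibility with $s+s_j>0$ is $j=2$; hence $s+\lambda_{\max}\le\lambda$, i.e.\ $s\le\lambda-\lambda_{\max}$, giving $v\in S_{\lambda-\lambda_{\max}}(\mathfrak{Sol}(M))$. The case $v\in\mathfrak{Sol}_0(M)$ is handled by Proposition~\ref{prop:basic}~(ii), which uses the PBQ hypothesis. Thus part~(ii) is another application of the cyclic-vector/Frobenius-equation analysis, not a reduction to the HPBQ case.
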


As mentioned in the introduction, we will study $(\sigma,\nabla)$-modules over $\mathcal{E}^{\dagger}$ rather than over $K[\![x]\!]_0$. Theorem~\ref{thm:main} will follow from Theorem~\ref{thm:main2}, which is a counterpart of Theorem~\ref{thm:main} for $(\sigma,\nabla)$-modules over $\mathcal{E}^{\dagger}$.

\section{Log-growth of analytic ring}\label{sec:lg}

In \cite{Ann} and \cite{Doc}, Kedlaya provides functorial constructions of various analytic rings associated to a certain extension of $k_K(\!(x)\!)$. We recall some of his construction. After defining a notion of log-growth on Kedlaya's analytic rings, we develop a theory of log-growth filtrations for $(\sigma,\nabla)$-modules over $\mathcal{E}^{\dagger}$.

\begin{notation}
We set $\Gamma:=\mathcal{O}_{\mathcal{E}}\subset\Gamma^{\alg}:=\mathcal{O}_{\mathcal{E}^{\sigma,\ur}}$ for compatibility with the notation in the references. We denote the norm $\nab{\cdot}{1}$ on $\Gamma[p^{-1}]$ by $\ab{\cdot}{1}$, and extend $\ab{\cdot}{1}$ to $\Gamma^{\alg}[p^{-1}]$.
\end{notation}

\begin{rem}
The ring $\Gamma^{\alg}$ in \cite{Ann}, which coincides with our $\Gamma^{\alg}$, is different from that in \cite{Doc}: the latter contains our $\Gamma^{\alg}$, but the residue field is the completion of $k_K(\!(x)\!)^{\alg}$. Fortunately, the definition of $\Gamma^{\alg}_{(\an,)\con}$ comes out the same as mentioned in \cite[2.4.13]{Doc}. By regarding our $\Gamma^{\alg}_{(\an,)\con}$ as a subring of $\Gamma^{\alg}_{(\an,)\con}$ in \cite{Doc}. we may make (careful) use of the results of \cite{Doc}.
\end{rem}

\subsection{Overconvergent rings}\label{subsec:oc}

We define subrings $\Gamma_{\con}$ and $\Gamma^{\alg}_{\con}$ of $\Gamma$ and $\Gamma^{\alg}$, respectively, as follows: For $f\in\Gamma^{\alg}[p^{-1}]$, we have a unique expression
\[
f=\sum_{i\gg -\infty}\pi_K^i[\bar{x}_i]
\]
with $\bar{x}_i\in k_K(\!(x)\!)^{\alg}$, where $[\cdot]$ denotes Teichm\"uller lift. For $n\in\N$, we define the partial valuation $v_n:\Gamma^{\alg}[p^{-1}]\to\mathbb{R}\cup\{\infty\}$ by
\[
v_n(f):=\min_{i\le n}\{v(\bar{x}_i)\},
\]
where $v$ denotes the non-archimedean valuation of $k_K(\!(x)\!)^{\alg}$ normalized by $v(x)=1$. For $r>0$, $n\in\Z$, and $f\in\Gamma^{\alg}[p^{-1}]$, we set \[
v_{n,r}(f)=r v_n(f)+n;
\]
for $r=0$, we set $v_{n,r}(f)=n$ if $v_n(f)<\infty$ and $v_{n,r}(f)=\infty$ if $v_n(f)=\infty$. For $r\in\mathbb{R}_{\ge 0}$, we denote by $\Gamma^{\alg}_r$ the subring of $f\in \Gamma^{\alg}$ such that $\lim_{n\to\infty}v_{n,r}(f)=\infty$. On $\Gamma^{\alg}_r[p^{-1}]\setminus\{0\}$, we define the non-archimedean valuation
\[
w_r(f):=\min_{n\in\mathbb{Z}}\{v_{n,r}(f)\}.
\]
Note that $\Gamma_0^{\alg}=\Gamma^{\alg}$, and $w_0$ is a $p$-adic valuation on $\Gamma^{\alg}[p^{-1}]$ normalized by $w_0(\pi_K)=1$ (\cite[2.1.11]{Doc}). We define a multiplicative norm $\ab{\cdot}{p^{-r}}:=|\pi_K|^{w_r(\cdot)}$ on $\Gamma_{r}^{\alg}[p^{-1}]$. Define $\Gamma^{\alg}_{\con}:=\cup_{r\in\mathbb{R}_{>0}}\Gamma^{\alg}_r$. Since $\Gamma_r^{\alg}\subset\Gamma_s^{\alg}$ for $0<s\le r$, we can define a value $\ab{f}{\rho}\in\mathbb{R}_{\ge 0}$ of $f\in \Gamma_{\con}^{\alg}[p^{-1}]$ for $\rho\in(0,1)$ sufficiently close to $1$ from the left. We define $\Gamma_{\con}:=\Gamma^{\alg}_{\con}\cap\Gamma$, and $\Gamma_{r}:=\Gamma^{\alg}_{r}\cap\Gamma$. Then, we have $\Gamma_{\con}=\mathcal{O}_{\mathcal{E}^{\dagger}}$ (\cite[2.3.7]{Doc}). Both $\Gamma_{\con}^{\alg}$ and $\Gamma_{\con}$ are Henselian discrete valuation rings (\cite[2.1.12, 2.2.13]{Doc}). Finally, note that $\Gamma^{\alg}_{\con}$, and hence, $\Gamma_{\con}$ is stable under $\sigma$ and $\ab{\sigma(\cdot)}{\rho}=\ab{\cdot}{\rho^q}$ for $\rho\in (0,1)$.

\begin{dfn}[{\cite[3.5]{Ann}}]\label{dfn:NP}
Let $f\in\Gamma^{\alg}_{r}[p^{-1}]$ be a non-zero element. We define the Newton polygon $\NP(f)$ of $f$ as the boundary of the lower convex hull of the set of points $(v_n(f),n)$, minus any segments of slopes less than $-r$ from the left end and/or any segments of non-negative slope on the right end of the polygon. We define the slopes of $f$ as the negatives of the slopes of $\NP(f)$. We also define the multiplicity of a slope $s\in (0,r]$ of $f$ as the positive difference in $y$-coordinate between the endpoints of the segment of $\NP(f)$ of slope $-s$.
\end{dfn}

The following simple fact is one of the key points in this paper.

\begin{lem}[{cf. \cite[Lemma~2.6]{Nak}}]\label{lem:lin}
Let $f\in\Gamma^{\alg}_{\con}[p^{-1}]$. Then, there exists $\rho_0\in\mathbb{R}_{>0}$ and $a\in\mathbb{Q}$ such that
\[
\ab{f}{\rho}={\rho}^a\ab{f}{1}\text{ for all }\rho\in (\rho_0,1].
\]
\end{lem}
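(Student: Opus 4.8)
The plan is to reduce to the case where $f \in \Gamma^{\alg}_r[p^{-1}]$ for a single fixed $r > 0$, and then to read off the behaviour of $\ab{f}{\rho}$ directly from the Newton polygon $\NP(f)$. Recall $\ab{f}{p^{-s}} = |\pi_K|^{w_s(f)}$ where $w_s(f) = \min_{n \in \Z} v_{n,s}(f) = \min_{n \in \Z}\{s\, v_n(f) + n\}$. So, writing $\rho = p^{-s}$, the quantity $\log_p \ab{f}{\rho} = -w_s(f) = \max_{n \in \Z}\{-s\, v_n(f) - n\}$ is, as a function of $s$, the upper envelope of the affine functions $s \mapsto -s\,v_n(f) - n$ indexed by $n$. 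Since $f \in \Gamma^{\alg}_r$ forces $v_{n,r}(f) \to \infty$, only finitely many indices $n$ can realize the minimum in $w_s(f)$ for $s$ in a fixed interval bounded away from $0$; hence this upper envelope is piecewise affine in $s$ with finitely many pieces on any such interval. In particular, there is a single affine piece valid for all $s$ in some interval $(0, s_0)$, i.e. for all $\rho$ in some interval $(\rho_0, 1)$ (and, by continuity / the closed form at $s = 0$, at $\rho = 1$ as well): $\log_p \ab{f}{\rho} = -s\,v_{n_0}(f) - n_0$ for the index $n_0$ attaining the minimum near $s = 0$. Writing this back in terms of $\rho$ gives $\ab{f}{\rho} = \rho^{v_{n_0}(f)}\,\ab{f}{1}$, so we may take $a := v_{n_0}(f)$.

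The remaining point is that $a = v_{n_0}(f)$ lies in $\Q$. The index $n_0$ is (the $y$-coordinate of) the left endpoint of $\NP(f)$, and $v_{n_0}(f)$ is its $x$-coordinate; this is $\min_n v_n(f)$, a partial valuation, which takes values in the value group of $v$ on $k_K(\!(x)\!)^{\alg}$. For $f \in \Gamma$ (the case ultimately needed, and the case after the relevant base change is unwound) this value group is $\Z$; in general it is contained in $\Q$ because $k_K(\!(x)\!)^{\alg}$ is an algebraic extension of $k_K(\!(x)\!)$, whose valuation is discrete with value group $\Z$, so every finite subextension has value group $\tfrac{1}{e}\Z$ for some $e$. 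Either way $a \in \Q$, as claimed.

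The one step requiring a little care is the passage from "$f \in \Gamma^{\alg}_{\con} = \cup_{r > 0}\Gamma^{\alg}_r$" to a fixed $r$: by definition $f \in \Gamma^{\alg}_r$ for some $r > 0$, and the value $\ab{f}{\rho}$ is only defined for $\rho$ sufficiently close to $1$, i.e. $\rho \in (p^{-r}, 1)$; shrinking the interval to $(\rho_0, 1)$ is harmless since the statement is about $\rho \uparrow 1$. I expect the main obstacle to be purely bookkeeping: making sure the "upper envelope has finitely many pieces near $s = 0$" argument is clean given that the indices $n$ range over all of $\Z$ (not just $n \ge 0$), which is where the defining property $\lim_{n \to \infty} v_{n,r}(f) = \infty$ of $\Gamma^{\alg}_r$ — together with the elementary fact that $v_n(f)$ is non-increasing in $n$ and bounded below — is used to control both tails. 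This is essentially the computation in \cite[Lemma~2.6]{Nak}, transported from the Amice ring to $\Gamma^{\alg}_{\con}$.
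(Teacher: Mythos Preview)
Your approach is essentially the paper's: both observe that $s\mapsto w_s(f)$ is piecewise affine and read off $a$ from the vertex of $\NP(f)$ that realizes the minimum for $s$ near $0$. The paper invokes \cite[2.4.6]{Doc} (finiteness of slopes of $\NP(f)$) and then shrinks $r$ until $\NP(f)$ has no slopes at all, so a single $n$ works on all of $[0,r]$; you instead argue the piecewise linearity directly.

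Two small issues in your direct argument. First, a normalization slip: since $\ab{f}{p^{-s}}=|\pi_K|^{w_s(f)}$ and $|\pi_K|=p^{-1/e_K}$, the exponent comes out as $a=v_{n_0}(f)/e_K$, not $v_{n_0}(f)$ (harmless for rationality). Second, your claim that $v_n(f)$ is ``bounded below'' is false in general: for $f=\sum_{n\ge 0}\pi_K^n[x^{-n}]\in\Gamma_r$ (any $r<1$) one has $v_n(f)=-n\to-\infty$. What membership in $\Gamma^{\alg}_r$ actually gives is only the \emph{linear} bound $v_n(f)\ge (C-n)/r$ for some constant $C$, whence $s\,v_n(f)+n\ge sC/r+n(1-s/r)$. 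This is still enough to show that for all sufficiently small $s$ a single index (namely $n_0=\min\{n:v_n(f)<\infty\}$) realizes the minimum, and with this correction your argument goes through.
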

\begin{proof}
We may assume $f\neq 0$ and $f\in\Gamma_{r}^{\alg}[p^{-1}]$ for some $r>0$. Since the number of the slopes of $\NP(f)$ with non-zero multiplicities is finite by \cite[2.4.6]{Doc}, we may assume that $f$ has no slopes after choosing $r$ sufficiently small. By \cite[2.4.6]{Doc} again, there exists a unique integer $n$ such that $w_s(f)=v_{n,s}(f)$ for all $s\in [0,r]$. Then, we have $\ab{f}{p^{-s}}=(p^{-s})^{v_n(f)/e_K}|\pi_K|^n$ for any $s\in [0,r]$, where $e_K$ is the absolute ramification index of $K$. By evaluating $s=0$, $\ab{f}{1}=|\pi_K|^n$. Hence, we obtain the assertion for $\rho_0=p^{-r}$ and $a=v_n(f)/e_K$.
\end{proof}

\subsection{Log-growth filtration over $\Gamma_{\con}[p^{-1}]$}\label{subsec:fil}

Throughout this section, let $\bullet$ denote either \ (blank) or alg. Let $\Gamma^{\bullet}_{\an,r}$ be the Fr\'echet completion of the ring $\Gamma^{\bullet}_r[p^{-1}]$ with respect to the family of valuations $\{w_s\}_{s\in (0,r]}$ (\cite[3.3]{Ann}). We define $\Gamma^{\bullet}_{\an,\con}:=\cup_{r\in\mathbb{R}_{>0}}\Gamma^{\bullet}_{\an,r}$. Then, we have $\Gamma_{\an,\con}=\mathcal{R}$, in particular, $\Gamma_{\an,\con}$ contains $K\{x\}$. By continuity, $\Gamma^{\bullet}_{\an,r}$ is endowed with a family of non-archimedean valuations induced by $\{v_n\}_{n\in\mathbb{Z}}$ and $\{w_s\}_{s\in (0,r]}$. In addition, the norm $\ab{\cdot}{p^{-r}}$ extends to $\Gamma_{\an,r}^{\bullet}$. As before, we can define a value $\ab{f}{\rho}\in\mathbb{R}_{\ge 0}$ of $f\in\Gamma_{\an,\con}^{\bullet}$ for $\rho\in(0,1)$ sufficiently close to $1$ from the left.

\begin{rem}
As mentioned above, we have $\Gamma_{\con}[p^{-1}]=\mathcal{E}^{\dagger}$ and $\Gamma_{\an,\con}=\mathcal{R}$ as rings. However, the partial norms $\ab{\cdot}{\rho}$ on $\Gamma_{\an,\con}$ and $|\cdot|^{\naive}_0(\rho)$ on $\mathcal{R}$ coincide with each other only when $\rho$ is sufficiently close to $1$ (\cite[2.3.5]{Doc}). For this reason, we will distinguish $\Gamma_{\con}[p^{-1}]$ and $\Gamma_{\an,\con}$ from $\mathcal{E}^{\dagger}$ and $\mathcal{R}$, respectively, as normed rings.
\end{rem}

\begin{dfn}[{Log-growth of analytic ring (cf. \cite[2.8]{Nak})}]
For $\lambda\in\mathbb{R}$, we denote by $\Fil_{\lambda}\Gamma^{\bullet}_{\an,\con}$ the subspace of $f\in\Gamma^{\bullet}_{\an,\con}$ such that
\[
\ab{f}{\rho}=\RO{-\lambda}\text{ as }\rho\uparrow 1.
\]
\end{dfn}

\begin{lem}\label{lem:an}
We have the following:
\begin{enumerate}
\item For a non-zero $f\in\Gamma^{\bullet}_{\an,\con}$,
\[
\liminf_{\rho\uparrow 1}\ab{f}{\rho}>0.
\]
\item
\[
\Fil_0\Gamma_{\an,\con}=\Gamma_{\con}[p^{-1}],\ \Fil_0\Gamma^{\alg}_{\an,\con}\supset\Gamma^{\alg}_{\con}[p^{-1}].
\]
\item
\[
K\{x\}\cap \Fil_{\lambda}\Gamma_{\an,\con}=K[\![x]\!]_{\lambda}\text{ for }\lambda\in\mathbb{R}.
\]
\item
\[
\sigma(\Fil_{\lambda}\Gamma^{\bullet}_{\an,\con})\subset \Fil_{\lambda}\Gamma^{\bullet}_{\an,\con}\text{ for }\lambda\in\mathbb{R}.
\]
\item
\[
\Fil_{\lambda_1}\Gamma^{\bullet}_{\an,\con}\cdot \Fil_{\lambda_2}\Gamma^{\bullet}_{\an,\con}\subset \Fil_{\lambda_1+\lambda_2}\Gamma^{\bullet}_{\an,\con}\text{ for }\lambda_1,\lambda_2\in\mathbb{R}.
\]
\end{enumerate}
\end{lem}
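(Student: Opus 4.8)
\textbf{Proof proposal for Lemma~\ref{lem:an}.}

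The plan is to handle the five assertions essentially in the given order, reducing everything to the behavior of the function $\rho\mapsto\ab{f}{\rho}$ for $\rho$ close to $1$, which on the dense subring $\Gamma^{\bullet}_{\con}[p^{-1}]$ is governed by Lemma~\ref{lem:lin} and on $\Gamma^{\bullet}_{\an,\con}$ by the Newton-polygon description of $\Gamma^{\bullet}_{\an,r}$ from \cite[3.3, 3.5]{Ann}. For part~(i), I would write $f\in\Gamma^{\bullet}_{\an,r}$ for some $r>0$; after shrinking $r$ I may assume $\NP(f)$ has no slopes in $(0,r]$, so that (as in the proof of Lemma~\ref{lem:lin}) there is a single integer $n$ with $w_s(f)=v_{n,s}(f)$ for all $s\in[0,r]$, whence $\ab{f}{p^{-s}}=(p^{-s})^{v_n(f)/e_K}|\pi_K|^{n}$; since $v_n(f)<\infty$ for $f\ne 0$, this tends to the nonzero limit $|\pi_K|^{n}$ as $s\downarrow 0$, giving $\liminf_{\rho\uparrow 1}\ab{f}{\rho}=|\pi_K|^{n}>0$. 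In particular this already shows $\Fil_{\lambda}\Gamma^{\bullet}_{\an,\con}=0$ for $\lambda<0$ would be false—rather, it shows membership in $\Fil_\lambda$ for $\lambda<0$ forces $f=0$, and it is the mechanism behind the equality $\Fil_0$ in part~(ii).

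For part~(ii): the inclusion $\Gamma_{\con}[p^{-1}]\subset\Fil_0\Gamma_{\an,\con}$ (and likewise with $\alg$) is immediate from Lemma~\ref{lem:lin}, since $\ab{f}{\rho}=\rho^{a}\ab{f}{1}$ is bounded above and below by constants as $\rho\uparrow 1$. For the reverse inclusion $\Fil_0\Gamma_{\an,\con}\subset\Gamma_{\con}[p^{-1}]$, I would use the identification $\Gamma_{\an,\con}=\mathcal{R}$ together with the known description: an element of $\mathcal{R}$ lying in $\Gamma_{\con}[p^{-1}]=\mathcal{E}^{\dagger}$ is exactly one whose ``principal part'' (negative-degree part) is bounded, and boundedness of $\ab{\cdot}{\rho}$ as $\rho\uparrow 1$ pins this down; concretely, if $f=\sum_{i}\pi_K^{i}[\bar x_i]$ had a slope $s>0$ in $\NP(f)$ for arbitrarily small $r$, then $\ab{f}{p^{-s'}}$ would blow up like $(p^{-s'})^{-(\text{positive})}$ as $s'\downarrow0$, contradicting $f\in\Fil_0$; hence $f$ has no positive slopes, i.e. $f\in\Gamma_{r}^{\bullet}[p^{-1}]$ for some $r$ up to a bounded error, and a short argument (or direct appeal to \cite[2.3.7, 2.4.6]{Doc}) yields $f\in\Gamma_{\con}[p^{-1}]$. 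The failure of equality in the $\alg$ case is expected and only an inclusion is claimed, so nothing more is needed there.

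Part~(iii) is the bridge to the classical picture: by \cite[2.3.5]{Doc} the norms $\ab{\cdot}{\rho}$ and $\nab{\cdot}{\rho}$ agree for $\rho$ sufficiently close to $1$ on $K\{x\}\subset\Gamma_{\an,\con}=\mathcal{R}$, so for $f\in K\{x\}$ the condition $\ab{f}{\rho}=\RO{-\lambda}$ as $\rho\uparrow1$ is literally the condition $\nab{f}{\rho}=\RO{-\lambda}$ as $\rho\uparrow1$, which by the second displayed description of $K[\![x]\!]_{\lambda}$ (citing \cite[Lemma~2.2.1~(iv)]{And}) is exactly membership in $K[\![x]\!]_{\lambda}$; intersecting with $K\{x\}$ is harmless since $K[\![x]\!]_\lambda\subset K\{x\}$ by definition. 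Part~(iv) follows from the identity $\ab{\sigma(f)}{\rho}=\ab{f}{\rho^{q}}$ recorded at the end of \S\ref{subsec:oc} (valid on $\Gamma^{\alg}_{\con}[p^{-1}]$, hence by continuity on $\Gamma^{\bullet}_{\an,\con}$): if $\ab{f}{\rho}=\RO{-\lambda}$ then $\ab{\sigma(f)}{\rho}=\ab{f}{\rho^{q}}=O\big((\log(1/\rho^{q}))^{-\lambda}\big)=O\big((\log(1/\rho))^{-\lambda}\big)$ since $\log(1/\rho^{q})=q\log(1/\rho)$ differs from $\log(1/\rho)$ only by the constant $q$. Part~(v) is the multiplicativity of $\ab{\cdot}{\rho}$: for $f_j\in\Fil_{\lambda_j}$ one has $\ab{f_1f_2}{\rho}=\ab{f_1}{\rho}\,\ab{f_2}{\rho}=\RO{-\lambda_1}\cdot\RO{-\lambda_2}=\RO{-(\lambda_1+\lambda_2)}$, using that $\ab{\cdot}{\rho}$ is a (multiplicative) non-archimedean norm on each $\Gamma^{\bullet}_{\an,r}$ and that products of $O$-terms multiply. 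The only genuinely non-formal point is the reverse inclusion in part~(ii): it is where one must actually invoke Kedlaya's structure theory for $\mathcal{R}$ versus $\mathcal{E}^{\dagger}$ (the finiteness of slopes with nonzero multiplicity, \cite[2.4.6]{Doc}, and the identification $\Gamma_{\con}=\mathcal{O}_{\mathcal{E}^{\dagger}}$, \cite[2.3.7]{Doc}) rather than a one-line estimate; everything else is bookkeeping with $O$-notation and the scaling relations for $\ab{\cdot}{\rho}$.
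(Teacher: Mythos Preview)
Your arguments for parts~(iii), (iv), and (v) match the paper's essentially verbatim. Part~(ii) is on the right track but vaguer than the paper's proof; the paper argues directly that $\ab{f}{\rho}=O(1)$ forces $w_s(f)>C$ for all small $s$, whence $v_n(f)<\infty$ implies $n>C$ (let $s\downarrow 0$ in $w_s(f)\le sv_n(f)+n$), so $v_n(f)=\infty$ for all $n\le C$, and then invokes \cite[2.5.6]{Doc} (not \cite[2.4.6]{Doc}) to conclude $\pi_K^l f\in\Gamma_r$ for suitable $l$. Your phrase ``$f\in\Gamma_r^{\bullet}[p^{-1}]$ for some $r$ up to a bounded error'' hides exactly this step.

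There is, however, a genuine gap in your part~(i). You write ``after shrinking $r$ I may assume $\NP(f)$ has no slopes in $(0,r]$,'' importing the mechanism of Lemma~\ref{lem:lin}. But Lemma~\ref{lem:lin} and the finiteness of slopes in \cite[2.4.6]{Doc} are statements about $\Gamma^{\alg}_{\con}[p^{-1}]=\bigcup_r\Gamma^{\alg}_r[p^{-1}]$, not about $\Gamma^{\bullet}_{\an,\con}$. For a general $f\in\Gamma^{\bullet}_{\an,r}$ the Newton polygon can have infinitely many slopes accumulating at $0$: e.g.\ $f=\sum_{n\ge 1}p^{-n}x^{2^n}\in K\{x\}\subset\Gamma_{\an,\con}$ has a new break each time the dominant term switches, and these breaks occur at $s_n\asymp 2^{-n}\to 0$. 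So no shrinking of $r$ eliminates all slopes, and you cannot conclude that a single $n$ realizes $w_s(f)=v_{n,s}(f)$ on an interval $[0,r]$. The paper sidesteps this entirely with a one-line inequality: pick any $n$ with $v_n(f)<\infty$ (such $n$ exists since $w_r(f)<\infty$), use $w_s(f)\le sv_n(f)+n$ for all $s\in(0,r]$, and let $s\downarrow 0$ to get $\limsup_{s\downarrow 0}w_s(f)\le n$, hence $\liminf_{\rho\uparrow 1}\ab{f}{\rho}\ge|\pi_K|^n>0$. No Newton polygon structure is needed, only the trivial bound $w_s\le v_{n,s}$.
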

\begin{proof}
\begin{enumerate}
\item We choose $r>0$ sufficiently small such that $f\in\Gamma_{\an,r}$. Then, we have $w_r(f)\neq \infty$ because $f\neq 0$. In particular, there exists $n\in\mathbb{Z}$ such that $v_n(f)\neq\infty$. By definition, $w_s(f)\le sv_n(f)+n$ for all $s\in (0,r]$. Therefore, $\limsup_{s\downarrow 0}w_s(f)\le n<\infty$, which implies the assertion.
\item By Lemma~\ref{lem:lin}, we have only to prove that $f\in\Fil_0\Gamma_{\an,\con}$ belongs to $\Gamma_{\con}[p^{-1}]$. Since $\ab{f}{\rho}=O(1)$ as $\rho\uparrow 1$, there exist a constant $C$ and $r>0$ such that $C<w_s(f)$ for all $s\in (0,r]$. If $v_n(f)<\infty$, then we have $C<n$ by taking the limit $s\downarrow 0$ in the inequality $w_s(f)\le s v_n(f)+n$. Hence, we have $v_n(f)=\infty$ for all sufficiently small $n\in\mathbb{Z}$. If we take $l\in\mathbb{Z}$ such that $v_n(\pi_K^lf)=\infty$ for all $n<0$, then we have $\pi_K^lf\in\Gamma_r$ by \cite[2.5.6]{Doc}, i.e., $f\in\Gamma_{\con}[\pi_K^{-1}]=\Gamma_{\con}[p^{-1}]$.
\item It follows from the fact that for $f\in K\{x\}$, we have $\ab{f}{\rho}=\nab{f}{\rho}$ for $\rho$ sufficiently close to $1$ from the left (\cite[2.3.5]{Doc}).
\item It follows from $\ab{\sigma(\cdot)}{\rho}=\ab{\cdot}{\rho^q}$.
\item The assertion follows from the multiplicativity of the norm $\ab{\cdot}{\rho}$.
\end{enumerate}
\end{proof}

Note that $\Fil_{\lambda}\Gamma^{\bullet}_{\an,\con}=0$ for $\lambda\in\mathbb{R}_{<0}$ by Lemma~\ref{lem:an}~(i). In addition, Lemma~\ref{lem:an} implies that $\Fil_{\lambda}\Gamma_{\an,\con}^{\bullet}$ forms an increasing filtration of $\sigma$-stable $\Gamma_{\con}^{\bullet}[p^{-1}]$-subspaces of $\Gamma_{\an,\con}^{\bullet}$.

\begin{rem}
In (i), the equality in the latter case does not hold. Indeed, there exists $f\in \Gamma^{\alg}_{\an,\con}$ such that $v_n(f)=\infty$ for $n\in\mathbb{Z}_{<0}$, but $f\notin  \Gamma^{\alg}_{\con}$ (\cite[2.4.13]{Doc}).
\end{rem}

\begin{dfn}[{A log extension of $\Gamma_{\an,\con}^{\bullet}$ (\cite[6.5]{Ann})}]
We set $\Gamma^{\bullet}_{\log,\an,\con}:=\Gamma^{\bullet}_{\an,\con}[\log{x}]$, where $\log{x}$ is an indeterminate. We can extend $\sigma$ to $\Gamma^{\bullet}_{\log,\an,\con}$ as follows:
\[
\sigma(\log{x}):=q\log{x}+\sum_{i=1}^{\infty}\frac{(-1)^{i-1}}{i}\left(\frac{\sigma(x)}{x^q}-1\right)^i.
\]
Moreover, we extend $d/dx$ to $\Gamma_{\log,\an,\con}=\mathcal{R}[\log{x}]$ by
\[
\frac{d}{dx}(\log{x})=\frac{1}{x}.
\]
We also define the notion of $(\sigma,\nabla)$-modules over $\Gamma_{\log,\an,\con}$ by setting $R=\Gamma_{\log,\an,\con}$ and $\Omega^1_R=\Gamma_{\log,\an,\con}dx$ in \S~\ref{subsec:lgfil}.

For $\rho\in (0,1)$, we put $r:=-\log_p{\rho}$ and extend $\ab{\cdot}{\rho}$ to $\Gamma^{\bullet}_{\an,r}[\log{x}]$ by
\[
\ab{\sum_{i\in\mathbb{N}}a_i(\log{x})^i}{\rho}:=\sup_{i\in\mathbb{N}}\ab{a_i}{\rho}\cdot\Ro{-i}.
\]
\end{dfn}

\begin{lem}
The function $\ab{\cdot}{\rho}$ is a multiplicative non-archimedean norm on the ring $\Gamma^{\bullet}_{\an,r}[\log{x}]$.
\end{lem}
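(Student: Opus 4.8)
The plan is to view $\Gamma^{\bullet}_{\an,r}[\log{x}]$ as the polynomial ring in the single indeterminate $\log{x}$ over $\Gamma^{\bullet}_{\an,r}$, and to recognize the function $\ab{\cdot}{\rho}$ defined on it as the Gauss norm attached to the norm $\ab{\cdot}{\rho}$ on $\Gamma^{\bullet}_{\an,r}$ together with the weight $\Ro{1}$ — more precisely, the weight $\Ro{-1}$ assigned to $\log{x}$. Here $r=-\log_p\rho$, so $\ab{\cdot}{\rho}=\ab{\cdot}{p^{-r}}$ is exactly the norm $|\pi_K|^{w_r(\cdot)}$ recalled above, which is a multiplicative non-archimedean norm on $\Gamma^{\bullet}_r[p^{-1}]$ and, by continuity, on its Fr\'echet completion $\Gamma^{\bullet}_{\an,r}$. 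Since every element $f=\sum_{i\in\mathbb{N}}a_i(\log{x})^i$ of $\Gamma^{\bullet}_{\an,r}[\log{x}]$ is a finite sum, the supremum in the definition is a maximum, and non-negativity, definiteness ($\ab{f}{\rho}=0\iff f=0$), homogeneity under scalars, and the ultrametric inequality $\ab{f+g}{\rho}\le\max\{\ab{f}{\rho},\ab{g}{\rho}\}$ all follow at once by applying the corresponding properties of $\ab{\cdot}{\rho}$ on $\Gamma^{\bullet}_{\an,r}$ coefficient-by-coefficient and passing to the supremum over $i$.

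The only substantive point is multiplicativity, which I would prove by the standard leading-term argument. Write $f=\sum_i a_i(\log{x})^i$, $g=\sum_j b_j(\log{x})^j$, and $fg=\sum_k c_k(\log{x})^k$ with $c_k=\sum_{i+j=k}a_ib_j$. Submultiplicativity $\ab{fg}{\rho}\le\ab{f}{\rho}\ab{g}{\rho}$ is immediate from the ultrametric inequality applied to each $c_k$, the multiplicativity of $\ab{\cdot}{\rho}$ on $\Gamma^{\bullet}_{\an,r}$, and the identity $\Ro{-i}\cdot\Ro{-j}=\Ro{-(i+j)}$. For the reverse inequality, assume $f,g\neq 0$ and let $i_0$ (resp.\ $j_0$) be the least index with $\ab{a_{i_0}}{\rho}\Ro{-i_0}=\ab{f}{\rho}$ (resp.\ $\ab{b_{j_0}}{\rho}\Ro{-j_0}=\ab{g}{\rho}$). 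In $c_{i_0+j_0}=\sum_{i+j=i_0+j_0}a_ib_j$ the term $a_{i_0}b_{j_0}$ has weighted norm $\ab{a_{i_0}b_{j_0}}{\rho}\Ro{-(i_0+j_0)}=\ab{a_{i_0}}{\rho}\ab{b_{j_0}}{\rho}\Ro{-(i_0+j_0)}=\ab{f}{\rho}\ab{g}{\rho}$, whereas any other summand $a_ib_j$ with $i+j=i_0+j_0$ has either $i<i_0$, in which case $\ab{a_i}{\rho}\Ro{-i}<\ab{f}{\rho}$ by minimality of $i_0$ while $\ab{b_j}{\rho}\Ro{-j}\le\ab{g}{\rho}$, or $i>i_0$, hence $j<j_0$ and symmetrically $\ab{b_j}{\rho}\Ro{-j}<\ab{g}{\rho}$; in either case its weighted norm is strictly below $\ab{f}{\rho}\ab{g}{\rho}$. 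By the ultrametric equality (the norm of a sum equals the maximum when the summand norms differ), $\ab{c_{i_0+j_0}}{\rho}\Ro{-(i_0+j_0)}=\ab{f}{\rho}\ab{g}{\rho}$, so $\ab{fg}{\rho}\ge\ab{f}{\rho}\ab{g}{\rho}$, and equality follows.

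I do not expect a genuine obstacle here: once one knows that $\ab{\cdot}{\rho}$ is a multiplicative non-archimedean norm on $\Gamma^{\bullet}_{\an,r}$ — part of Kedlaya's construction recalled above — the argument is purely formal. The only places requiring a little care are the verification that multiplicativity survives the passage to the Fr\'echet completion $\Gamma^{\bullet}_{\an,r}$ (a routine continuity argument using that products and limits commute), and the bookkeeping of the weights $\Ro{-i}$, for which it is cleanest to pass to the additive valuation $-\log\ab{\cdot}{\rho}$ so that the weight of $(\log{x})^i$ becomes a linear function of the $\log{x}$-degree; both are entirely standard.
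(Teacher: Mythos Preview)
Your proposal is correct and follows essentially the same approach as the paper: both reduce to checking multiplicativity, pick the minimal indices $i_0,j_0$ at which the weighted coefficient norms attain $\ab{f}{\rho}$ and $\ab{g}{\rho}$, and show that in the $(i_0+j_0)$-th coefficient of $fg$ the term $a_{i_0}b_{j_0}$ strictly dominates all others, yielding $\ab{fg}{\rho}\ge\ab{f}{\rho}\ab{g}{\rho}$. Your framing as a Gauss norm and the explicit mention of the ultrametric equality are cosmetic additions, not a different argument.
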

\begin{proof}
We have only to check the multiplicativity of $\ab{\cdot}{\rho}$. Let $f=\sum_ia_i(\log{x})^i,g=\sum_jb_j(\log{x})^j\in\Gamma^{\bullet}_{\an,r}[\log{x}]$. We have $\ab{fg}{\rho}\le \ab{f}{\rho}\cdot \ab{g}{\rho}$ by definition. We prove the converse. We may assume $f\neq 0$ and $g\neq 0$. Let $i_0$ (resp. $j_0$) be the minimum $i$ (resp. $j$) such that $\ab{f}{\rho}=\ab{a_i}{\rho}\cdot (\log{1/\rho})^{-i}$ (resp. $\ab{g}{\rho}=\ab{b_j}{\rho}\cdot (\log{1/\rho})^{-j}$). For $i_1<i_0$ and $j_0\le j_1$, we have
\[
\ab{a_{i_0}}{\rho}\cdot\Ro{-i_0}>\ab{a_{i_1}}{\rho}\cdot\Ro{-i_1},\ \ab{b_{j_0}}{\rho}\cdot\Ro{-j_0}\ge\ab{b_{j_1}}{\rho}\cdot\Ro{-j_1},
\]
and hence, $\ab{a_{i_0}b_{j_0}(\log{x})^{i_0+j_0}}{\rho}>\ab{a_{i_1}b_{j_1}(\log{x})^{i_1+j_1}}{\rho}$. Similarly, we have
\[
\ab{a_{i_0}b_{j_0}(\log{x})^{i_0+j_0}}{\rho}>\ab{a_{i_1}b_{j_1}(\log{x})^{i_1+j_1}}{\rho}
\]
for $i_1\ge i_0$ and $j_0>j_1$. Therefore, we have
\[
\ab{fg}{\rho}\ge \ab{\sum_{i+j=i_0+j_0}{a_ib_j}(\log{x})^{i+j}}{\rho}=\ab{f}{\rho}\cdot\ab{g}{\rho}.
\]
\end{proof}

\begin{dfn}\label{dfn:fil2} 
We define a log-growth filtration of $\Gamma^{\bullet}_{\log,\an,\con}$ by
\[
\Fil_{\lambda}\Gamma^{\bullet}_{\log,\an,\con}:=\bigoplus_{i=0}^{\lfloor \lambda\rfloor}\Fil_{\lambda-i}\Gamma^{\bullet}_{\an,\con}\cdot(\log{x})^i
\]
for $\lambda\in\mathbb{R}_{\ge 0}$ and $\Fil_{\lambda}\Gamma^{\bullet}_{\log,\an,\con}:=0$ for $\lambda\in\mathbb{R}_{<0}$. Here, $\lfloor \lambda\rfloor$ denotes the greatest integer less than or equal to $\lambda$. For $\lambda\in\mathbb{R}$, we say that $y\in\Gamma^{\bullet}_{\log,\an,\con}$ has a log-growth $\lambda$ if $y\in \Fil_{\lambda}\Gamma^{\bullet}_{\log,\an,\con}$. Moreover, we say that $f$ is bounded if $f$ has a log-growth $0$. For $\lambda\in\mathbb{R}_{>0}$, we also say that $y$ is exactly of log-growth $\lambda$ if $y\in \Fil_{\lambda}\Gamma^{\bullet}_{\log,\an,\con}$ and $y\notin \Fil_{\delta}\Gamma^{\bullet}_{\log,\an,\con}$ for any $0\le \delta<\lambda$ (\cite[1.1]{CT}).
\end{dfn}

\begin{lem}\label{lem:logan}
\begin{enumerate}
\item For $f\in\Gamma^{\bullet}_{\log,\an,\con}$ and $\lambda\in\mathbb{R}$, we have $f\in \Fil_{\lambda}\Gamma^{\bullet}_{\log,\an,\con}$ if and only if
\[
\ab{f}{\rho}=\RO{-\lambda}\text{ as }\rho\uparrow 1.
\]
\item
\[
\sigma(\Fil_{\lambda}\Gamma^{\bullet}_{\log,\an,\con})\subset \Fil_{\lambda}\Gamma^{\bullet}_{\log,\an,\con}\text{ for }\lambda\in\mathbb{R}.
\]
\item
\[
\Fil_{\lambda_1}\Gamma^{\bullet}_{\log,\an,\con}\cdot \Fil_{\lambda_2}\Gamma^{\bullet}_{\log,\an,\con}\subset \Fil_{\lambda_1+\lambda_2}\Gamma^{\bullet}_{\log,\an,\con}\text{ for }\lambda_1,\lambda_2\in\mathbb{R}.
\]
\end{enumerate}
\end{lem}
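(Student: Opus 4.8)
The plan is to prove (i) first, since (ii) and (iii) will then follow from it by essentially formal manipulations. Fix $f\in\Gamma^{\bullet}_{\log,\an,\con}=\Gamma^{\bullet}_{\an,\con}[\log x]$ and write it uniquely as $f=\sum_{i=0}^{d}a_i(\log x)^i$ with $a_i\in\Gamma^{\bullet}_{\an,\con}$. Everything hinges on the interplay of three things: the norm formula $\ab{f}{\rho}=\sup_i\ab{a_i}{\rho}\Ro{-i}$ valid for $\rho$ close to $1$, the definition $\Fil_{\lambda}\Gamma^{\bullet}_{\log,\an,\con}=\bigoplus_{i=0}^{\lfloor\lambda\rfloor}\Fil_{\lambda-i}\Gamma^{\bullet}_{\an,\con}\cdot(\log x)^i$ of Definition~\ref{dfn:fil2}, and the characterization of $\Fil_{\mu}\Gamma^{\bullet}_{\an,\con}$ as those $g$ with $\ab{g}{\rho}=\RO{-\mu}$.

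For the ``only if'' direction of (i): if $f\in\Fil_{\lambda}\Gamma^{\bullet}_{\log,\an,\con}$, then $d\le\lfloor\lambda\rfloor$ and $\ab{a_i}{\rho}=\RO{i-\lambda}$ for each $i$, so every term $\ab{a_i}{\rho}\Ro{-i}$ is $\RO{-\lambda}$, hence so is the finite supremum $\ab{f}{\rho}$. For the ``if'' direction: assuming $\ab{f}{\rho}=\RO{-\lambda}$, each term obeys $\ab{a_i}{\rho}\le\Ro{i}\ab{f}{\rho}=\RO{i-\lambda}$; when $i>\lambda$ this forces $\ab{a_i}{\rho}\to 0$ as $\rho\uparrow 1$, contradicting Lemma~\ref{lem:an}~(i) unless $a_i=0$, so $d\le\lfloor\lambda\rfloor$, while for $i\le\lfloor\lambda\rfloor$ the bound says exactly $a_i\in\Fil_{\lambda-i}\Gamma^{\bullet}_{\an,\con}$. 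Thus $f\in\Fil_{\lambda}\Gamma^{\bullet}_{\log,\an,\con}$ by Definition~\ref{dfn:fil2}.

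Granting (i), part (iii) is immediate from the multiplicativity of $\ab{\cdot}{\rho}$ on $\Gamma^{\bullet}_{\an,r}[\log x]$ proved just above: for $f\in\Fil_{\lambda_1}\Gamma^{\bullet}_{\log,\an,\con}$ and $g\in\Fil_{\lambda_2}\Gamma^{\bullet}_{\log,\an,\con}$ one gets $\ab{fg}{\rho}=\ab{f}{\rho}\ab{g}{\rho}=\RO{-\lambda_1-\lambda_2}$, so $fg\in\Fil_{\lambda_1+\lambda_2}\Gamma^{\bullet}_{\log,\an,\con}$ by (i) again. For part (ii) the crucial input is that $c:=\sigma(\log x)-q\log x=\sum_{i\ge 1}\frac{(-1)^{i-1}}{i}\bigl(\sigma(x)/x^q-1\bigr)^i$ is bounded, i.e.\ lies in $\Fil_0\Gamma^{\bullet}_{\an,\con}$: since $\sigma(x)\equiv x^q$ modulo $\pi_K\mathcal{O}_K[\![x]\!]$, one checks $\ab{\sigma(x)/x^q-1}{\rho}<1$ for $\rho$ close to $1$, so the series converges in $\Gamma_{\con}[p^{-1}]\subset\Fil_0\Gamma^{\bullet}_{\an,\con}$. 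Then for $f=\sum_{i=0}^{d}a_i(\log x)^i\in\Fil_{\lambda}\Gamma^{\bullet}_{\log,\an,\con}$ (so $a_i\in\Fil_{\lambda-i}\Gamma^{\bullet}_{\an,\con}$ and $d\le\lfloor\lambda\rfloor$), expanding $\sigma(f)=\sum_i\sigma(a_i)(q\log x+c)^i$ in powers of $\log x$ produces, as coefficient of $(\log x)^j$, a sum of terms $\binom{i}{j}q^j\sigma(a_i)c^{i-j}$ with $i\ge j$; by Lemma~\ref{lem:an}~(iv) and (v) each such term lies in $\Fil_{\lambda-i}\Gamma^{\bullet}_{\an,\con}\subset\Fil_{\lambda-j}\Gamma^{\bullet}_{\an,\con}$ (the filtration is increasing), so $\sigma(f)\in\Fil_{\lambda}\Gamma^{\bullet}_{\log,\an,\con}$. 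Alternatively, (ii) can be deduced purely from (i) via the estimates $\ab{q\log x+c}{\rho}=\RO{-1}$ and $\ab{\sigma(a_i)}{\rho}=\ab{a_i}{\rho^q}=\RO{i-\lambda}$.

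I expect no serious obstacle: the only mildly technical point is the convergence/boundedness estimate for the series defining $\sigma(\log x)$ used in (ii) (which is standard, being implicit already in the construction of $\Gamma^{\bullet}_{\log,\an,\con}$), and the routine bookkeeping in passing between the two descriptions of $\Fil$.
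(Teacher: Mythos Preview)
Your proof is correct and follows essentially the same approach as the paper, with considerably more detail supplied. The paper dismisses (i) as ``follows from the definition,'' proves (ii) by citing that $\sum_{i\ge 1}\frac{(-1)^{i-1}}{i}(\sigma(x)/x^q-1)^i\in\Gamma_{\con}[p^{-1}]=\Fil_0\Gamma_{\an,\con}$ (referring to \cite[6.5]{Ann}), and derives (iii) from Lemma~\ref{lem:an}~(v); your arguments make explicit exactly the steps these one-line proofs leave implicit, and the only minor variation is that for (iii) you route through (i) and the multiplicativity of the extended norm rather than applying Lemma~\ref{lem:an}~(v) coefficientwise on the $(\log x)^i$-decomposition, which amounts to the same thing.
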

\begin{proof}
\begin{enumerate}
\item The assertion  follows from the definition.
\item The assertion follows from $\sum_{i=1}^{\infty}\frac{(-1)^{i-1}}{i}\left(\frac{\sigma(x)}{x^q}-1\right)^i\in\Gamma_{\con}[p^{-1}]=\Fil_0\Gamma_{\an,\con}$ (\cite[6.5]{Ann}).
\item The assertion follows from Lemma~\ref{lem:an}~(v).
\end{enumerate}
\end{proof}

\begin{dfn}[Log-growth filtration]\label{dfn:genfil}
Let $M$ be a $(\sigma,\nabla)$-module of rank~$n$ over $\Gamma_{\con}[p^{-1}]$. We set
\[
\mathfrak{V}(M):=(\Gamma_{\log,\an,\con}\otimes_{\Gamma_{\con}[p^{-1}]}M)^{\nabla=0},
\]
\[
\mathfrak{Sol}(M):=\Hom_{\Gamma_{\con}[p^{-1}]}(M,\Gamma_{\log,\an,\con})^{\nabla=0}\cong \mathfrak{V}(M\spcheck).
\]
We say that $M$ is solvable in $\Gamma_{\log,\an,\con}$ if $\dim_K\mathfrak{V}(M)=n$. In this case, we define
\[
\mathfrak{Sol}_{\lambda}(M):=\Hom_{\Gamma_{\con}[p^{-1}]}(M,\Fil_{\lambda}\Gamma_{\log,\an,\con})\cap\mathfrak{Sol}(M)
\]
and
\[
\mathfrak{V}(M)^{\lambda}:=\mathfrak{Sol}_{\lambda}(M)^{\perp}
\]
where $(\cdot)^{\perp}$ denotes the orthogonal space with respect to the canonical pairing $\mathfrak{V}(M)\otimes_K\mathfrak{Sol}(M)\to K$. We call the decreasing filtration $\{\mathfrak{V}(M)^{\lambda}\}_{\lambda}$ the special log-growth filtration of $M$.

\end{dfn}

Note that if $M$ is a $(\sigma,\nabla)$-module over $\Gamma_{\con}[p^{-1}]$ solvable in $\Gamma_{\log,\an,\con}$, then $\mathfrak{V}(M)$ is a $\sigma$-module over $K$ by the injectivity of $\varphi:\mathfrak{V}(M)\to\mathfrak{V}(M)$. By Lemma~\ref{lem:logan}~(ii), $\mathfrak{V}(M)^{\lambda}$ (resp. $\mathfrak{Sol}_{\lambda}(M)$) is a $\sigma$-submodule of $\mathfrak{V}(M)$ (resp. $\mathfrak{Sol}(M)$).

\begin{example}
We give an example of a $\nabla$-module defined over $\Gamma_{\con}[p^{-1}]$, which is not necessary defined over $K[\![x]\!]_0$. Let $a=\sum_{n\in\mathbb{Z}}a_nx^n\in\Gamma_{\con}[p^{-1}]$ with $a_n\in K$ and we assume that there exists $\delta>0$ such that
\[
O(|a_{-n}|)=O(p^{-n\delta})\text{ as }n\to\infty.
\]
For example, assume $a_{-n}=0$ for all $n\gg 0$. Let $M=\Gamma_{\con}[p^{-1}]e_1\oplus\Gamma_{\con}[p^{-1}]e_2$ be the $\nabla$-module of rank $2$ over $\Gamma_{\con}[p^{-1}]$ defined by
\[
\nabla(e_1,e_2)=(0,ae_1dx).
\]
We set
\[
f=\sum_{n\neq -1}\frac{1}{n+1}a_nx^{n+1}+a_{-1}\log{x},
\]
which belongs to $\Gamma_{\log,\an,\con}$ as follows. For $n\ge 0$ and $\rho\in [0,1)$,
\[
\left|\frac{1}{n+1}a_nx^{n+1}\right|_{\rho}\le\frac{1}{n+1}\cdot\sup_{n\ge 0}|a_n|\cdot\rho^{n+1}\to 0\text{ as }n\to\infty.
\]
We fix $\rho_0\in (p^{-\delta},1)$. For $\rho\in [\rho_0,1)$ and $n\ge 2$,
\[
\left|\frac{1}{-n+1}a_{-n}x^{-n+1}\right|_{\rho}\le\frac{1}{n-1}\cdot\frac{|a_{-n}|}{p^{-n\delta}}\cdot\left(\frac{1}{p^{\delta}\rho}\right)^n\cdot\rho\le\frac{1}{n-1}\cdot\frac{|a_{-n}|}{p^{-n\delta}}\cdot\left(\frac{1}{p^{\delta}\rho_0}\right)^n\to 0\text{ as }n\to\infty.
\]
Since the convergence is uniform with respect to $\rho$ in the latter inequality, we have $f\in\Fil_{\xi}\Gamma_{\log \an,\con}$, where we define $\xi$ as the log-growth of the power series $\sum_{n\ge 0}\frac{1}{n+1}a_nx^n$ if $a_{-1}=0$, and $\xi=1$ if $a_{-1}\neq 0$. Then, $\mathfrak{V}(M):=(\Gamma_{\log,\an,\con}\otimes_{\Gamma_{\con}[p^{-1}]}M)^{\nabla=0}$ has a basis $\{e_1,fe_1-e_2\}$ and we have
\[
\mathfrak{V}(M)^{\lambda}=
\begin{cases}
\mathfrak{V}(M)&\text{if }\lambda<0\\
Ke_1&\text{if }0\le\lambda<\xi\\
0&\text{if }\xi\le\lambda.
\end{cases}
\]
\end{example}

\begin{rem}
We can define a special log-growth filtration for an arbitrary $(\sigma,\nabla)$-module over $\Gamma_{\con}[p^{-1}]$ possibly after tensoring a suitable extension of $\Gamma_{\con}[p^{-1}]$ as follows. Let $M$ be a $(\sigma,\nabla)$-module over $\Gamma_{\con}[p^{-1}]$. Then, there exists a finite \'etale extension $\Gamma^l/\Gamma$, corresponding to a certain finite separable extension $l/k_K(\!(x)\!)$, such that $M':=\Gamma_{\con}^l[p^{-1}]\otimes_{\Gamma_{\con}[p^{-1}]}M$ is solvable in $\Gamma^l_{\log,\an,\con}$ by the log version of the $p$-adic local monodromy theorem (\cite[6.13]{Ann}). Similarly as above, we may define a special log-growth filtration of $M'$.
\end{rem}

The log-growth filtrations are compatible with the base change $K[\![x]\!]_0\to\mathcal{E}^{\dagger}=\Gamma_{\con}[p^{-1}]$:

\begin{lem}\label{lem:bc}
Let $M$ be a $(\sigma,\nabla)$-module over $K[\![x]\!]_0$. Then, the $(\sigma,\nabla)$-module $\Gamma_{\con}[p^{-1}]\otimes_{K[\![x]\!]_0}M$ over $\Gamma_{\con}[p^{-1}]$ is solvable in $\Gamma_{\log,\an,\con}$. Moreover, the canonical map
\[
\iota:V(M)\to\mathfrak{V}(\Gamma_{\con}[p^{-1}]\otimes_{K[\![x]\!]_0}M)
\]
is an isomorphism, and preserves the Frobenius filtrations and the log-growth filtrations.
\end{lem}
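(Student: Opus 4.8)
The plan is to reduce everything to the analogous statements over the analytic ring, using the already-established comparison between $K\{x\}$-growth and $\Gamma_{\an,\con}$-growth. First I would show that $N:=\Gamma_{\con}[p^{-1}]\otimes_{K[\![x]\!]_0}M$ is solvable in $\Gamma_{\log,\an,\con}$. Since $M$ is a $(\sigma,\nabla)$-module over $K[\![x]\!]_0$, Dwork's transfer theorem (or the classical bound, cited in the introduction) gives that $M\otimes_{K[\![x]\!]_0}K\{x\}$ has a full set of horizontal sections in $K\{x\}$; more precisely every horizontal section lies in $K[\![x]\!]_{n-1}$, hence in $\Fil_{n-1}\Gamma_{\an,\con}$ by Lemma~\ref{lem:an}~(iii). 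Because $V(M)$ is an $n$-dimensional $K$-vector space of horizontal sections of $M$ inside $K\{x\}\subset\Gamma_{\an,\con}\subset\Gamma_{\log,\an,\con}$, the composite $V(M)\to\mathfrak{V}(N)$ is well-defined and injective (a nonzero horizontal section in $K\{x\}$ stays nonzero), and since $\dim_K\mathfrak{V}(N)\le n=\operatorname{rank}N$ always, the map $\iota$ is forced to be an isomorphism and $N$ is solvable in $\Gamma_{\log,\an,\con}$.

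Next I would identify the two log-growth filtrations under $\iota$. By definition $\mathfrak{Sol}_\lambda(N)=\Hom_{\Gamma_{\con}[p^{-1}]}(N,\Fil_\lambda\Gamma_{\log,\an,\con})\cap\mathfrak{Sol}(N)$ and $\Sol_\lambda(M)=\Hom_{K[\![x]\!]_0}(M,K[\![x]\!]_\lambda)\cap\Sol(M)$. A solution $f\in\Sol(M)$ takes values in $K\{x\}$, so by Lemma~\ref{lem:an}~(iii) one has $f\in\Sol_\lambda(M)$ iff $f(m)\in K\{x\}\cap\Fil_\lambda\Gamma_{\an,\con}$ for all $m$, iff the induced $\Gamma_{\con}[p^{-1}]$-linear map $N\to\Gamma_{\log,\an,\con}$ lands in $\Fil_\lambda\Gamma_{\log,\an,\con}$ (it actually lands in $\Fil_\lambda\Gamma_{\an,\con}$, the $\log x$-free part, which is consistent). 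Conversely a horizontal $\Gamma_{\con}[p^{-1}]$-linear map $N\to\Fil_\lambda\Gamma_{\log,\an,\con}$ restricts, via the $K\{x\}$-structure, to an element of $\Sol(M)$ whose values lie in $K\{x\}\cap\Fil_\lambda\Gamma_{\an,\con}=K[\![x]\!]_\lambda$; here one needs that a horizontal section defined over $\Gamma_{\log,\an,\con}$ which, tensored back to the $K\{x\}$-span, gives a genuine analytic solution, automatically has no $\log x$-component, because $K\{x\}$-valued solutions already span $\mathfrak{Sol}(N)$ by the solvability just proved. Thus $\iota$ carries $\Sol_\lambda(M)$ onto $\mathfrak{Sol}_\lambda(N)$, and dually (using compatibility of the perfect pairings $V(M)\otimes\Sol(M)\to K$ and $\mathfrak{V}(N)\otimes\mathfrak{Sol}(N)\to K$, both induced from the canonical pairing $M\otimes M\spcheck\to K[\![x]\!]_0\hookrightarrow\Gamma_{\con}[p^{-1}]$) it carries $V(M)^\lambda=\Sol_\lambda(M)^\perp$ onto $\mathfrak{V}(N)^\lambda=\mathfrak{Sol}_\lambda(N)^\perp$.

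Finally, compatibility with the Frobenius filtrations is the easiest point: $\iota$ is by construction $\varphi$-equivariant (the Frobenius on horizontal sections is induced by $\varphi$ on $M$ in both cases, and $\sigma$ on $K\{x\}$ agrees with $\sigma$ on $\Gamma_{\an,\con}$), so it is an isomorphism of $\sigma$-modules over $K$, hence automatically preserves the (unique) slope filtrations $S_\bullet$. I expect the main obstacle to be the $\log x$-freeness point in the second paragraph: one must rule out that a solution over $\Gamma_{\log,\an,\con}$ of the base-changed module acquires spurious logarithmic terms that are invisible after intersecting with $K\{x\}$. The clean way around this is to argue purely on dimensions — $\dim_K\mathfrak{Sol}(N)\le n$ and the $K\{x\}$-valued solutions already give $n$ linearly independent elements — so that $\mathfrak{Sol}(N)$ is exactly the image of $\Sol(M)$ and no analysis of individual $\log x$-coefficients is needed; the filtration statements then follow from Lemma~\ref{lem:an}~(iii) applied coefficientwise.
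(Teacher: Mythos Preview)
Your proposal is correct and follows essentially the same approach as the paper: use the compatibility of $K\{x\}\hookrightarrow\Gamma_{\log,\an,\con}$ with $\sigma$ and $d/dx$ together with a dimension count to see that $\iota$ is an isomorphism of $\sigma$-modules, and then invoke Lemma~\ref{lem:an}~(iii) (in the form $\Fil_\lambda\Gamma_{\log,\an,\con}\cap K\{x\}=\Fil_\lambda\Gamma_{\an,\con}\cap K\{x\}=K[\![x]\!]_\lambda$) to match the log-growth filtrations. Your discussion of the ``$\log x$-freeness'' issue is exactly what the paper handles in one line via that intersection identity, and your resolution by dimension counting is the same as the paper's implicit argument.
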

\begin{proof}
Since the natural inclusion $K\{x\}\to\Gamma_{\log,\an,\con}$ is compatible with Frobenius and differentials, $\Gamma_{\con}[p^{-1}]\otimes_{K[\![x]\!]_0}M$ is solvable in $\Gamma_{\log,\an,\con}$, and $\iota$ is an isomorphism of $\sigma$-modules over $K$. The rest of the assertion follows from $\Fil_{\lambda}\Gamma_{\log,\an,\con}\cap K\{x\}=\Fil_{\lambda}\Gamma_{\an,\con}\cap K\{x\}=K[\![x]\!]_{\lambda}$ (Lemma~\ref{lem:an}~(iii)).
\end{proof}

\subsection{Chiarellotto-Tsuzuki's conjecture over $\Gamma_{\con}[p^{-1}]$}

We formulate an analogue of Theorem~\ref{thm:main} for $(\sigma,\nabla)$-modules over $\Gamma_{\con}[p^{-1}]$.

\begin{assumption}\label{ass:alg}
In this section, we assume that $k_K$ is algebraically closed for simplicity.
\end{assumption}

\begin{dfn}
Let $M$ be a $(\sigma,\nabla)$-module over $\Gamma_{\con}[p^{-1}]$ solvable in $\Gamma_{\log,\an,\con}$. 
\begin{enumerate}
\item We call a Frobenius slope of $\mathfrak{V}(M)$ a special Frobenius slope of $M$.
\item We set $M_{\mathcal{E}}:=\mathcal{E}\otimes_{\Gamma_{\con}[p^{-1}]}M$, which is a $(\sigma,\nabla)$-module over $\mathcal{E}$. We call a Frobenius slope of $M_{\mathcal{E}}$ a generic Frobenius slope of $M$.
\end{enumerate}
\end{dfn}

\begin{prop}\label{prop:basic}
Let $M$ be a $(\sigma,\nabla)$-module over $\Gamma_{\con}[p^{-1}]$ solvable in $\Gamma_{\log,\an,\con}$. Let $\lambda_{\max}$ be the highest Frobenius slope of $M_{\mathcal{E}}$.
\begin{enumerate}
\item (Analogue of \cite[Theorem 6.17]{CT}) We have
\[
\mathfrak{V}(M)^{\lambda}\subset (S_{\lambda-\lambda_{\max}}(\mathfrak{V}(M\spcheck)))^{\perp}
\]
for all $\lambda\in\mathbb{R}$. Here, $(\cdot)^{\perp}$ denotes the orthogonal space with respect to the canonical pairing $\mathfrak{V}(M)\otimes_K\mathfrak{V}(M\spcheck)\to K$.
\item If $M_{\mathcal{E}}$ is PBQ, then
\[
\mathfrak{V}(M)^0=(S_{-\lambda_{\max}}(\mathfrak{V}(M\spcheck)))^{\perp}.
\]
\end{enumerate}
\end{prop}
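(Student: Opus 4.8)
The plan is to reduce the two statements to their known counterparts for $(\sigma,\nabla)$-modules over $K[\![x]\!]_0$, namely Theorem~\ref{thm:CT}~(iii) for part~(i) and Conjecture~\ref{conj:sp}~(ii) in the HPBQ/known cases for part~(ii), by transporting along a base change. The key observation is Lemma~\ref{lem:bc}: for a $(\sigma,\nabla)$-module defined over $K[\![x]\!]_0$, tensoring up to $\Gamma_{\con}[p^{-1}]=\mathcal{E}^{\dagger}$ produces a module solvable in $\Gamma_{\log,\an,\con}$, and the canonical map $\iota$ on horizontal sections is an isomorphism of $\sigma$-modules over $K$ that preserves both the Frobenius filtration and the log-growth filtration. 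So the first step is to arrange that $M$ descends to $K[\![x]\!]_0$, or, failing that, to a finite \'etale extension of it. Concretely, given $M$ over $\Gamma_{\con}[p^{-1}]$ one argues that $M$ is the base change of a $(\sigma,\nabla)$-module over $K[\![x]\!]_0$ up to replacing $K[\![x]\!]_0$ by an analogous ring after a finite separable residue extension $l/k_K(\!(x)\!)$; since $k_K$ is algebraically closed (Assumption~\ref{ass:alg}), Frobenius slopes, the PBQ condition, and all the objects in sight are insensitive to this harmless extension, and the formation of $\mathfrak{V}$, $\mathfrak{Sol}$, $M_{\mathcal{E}}$, and the pairings commutes with it.

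Granting the descent, part~(i) follows immediately: under $\iota$, $\mathfrak{V}(M)^{\lambda}$ corresponds to $V(M_0)^{\lambda}$ and $S_{\lambda-\lambda_{\max}}(\mathfrak{V}(M\spcheck))$ corresponds to $S_{\lambda-\lambda_{\max}}(V(M_0\spcheck))$ (using that $\iota$ respects the pairing $\mathfrak{V}(M)\otimes_K\mathfrak{V}(M\spcheck)\to K$, which is the image of the canonical pairing $M\otimes M\spcheck\to\Gamma_{\con}[p^{-1}]$, itself the base change of $M_0\otimes M_0\spcheck\to K[\![x]\!]_0$), so the desired inclusion is exactly Theorem~\ref{thm:CT}~(iii). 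Note $\lambda_{\max}$ is the same computed for $M_{\mathcal{E}}$ or $(M_0)_{\mathcal{E}}$ since $M_{\mathcal{E}}$ is literally $\mathcal{E}\otimes_{K[\![x]\!]_0}M_0$ after identifying $\mathcal{E}$ as the completion of $\mathcal{E}^{\dagger}$.

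For part~(ii), the PBQ hypothesis on $M_{\mathcal{E}}$ transports to PBQ for $(M_0)_{\mathcal{E}}$ (they are the same module over $\mathcal{E}$), so $M_0$ is PBQ in the sense of the definition for $K[\![x]\!]_0$-modules. One then wants to invoke Conjecture~\ref{conj:sp}~(ii) at $\lambda=0$; the subtlety is that the full conjecture is not yet available at this point in the paper (it is Theorem~\ref{thm:main}~(ii), proved later under a hypothesis). The resolution is that only the $\lambda=0$ case is needed here, and for $\lambda=0$ the equality $V(M_0)^0=(S_{-\lambda_{\max}}(V(M_0\spcheck)))^{\perp}$ holds for PBQ modules by the results of Chiarellotto--Tsuzuki: combining the inclusion of Theorem~\ref{thm:CT}~(iii) with the reverse inclusion, which for the single break $\lambda=0$ is precisely the content that $M_0/M_0^0$ being bounded (a consequence of PBQ) forces $V(M_0)^0$ to contain all of $(S_{-\lambda_{\max}}(V(M_0\spcheck)))^{\perp}$ — this is extracted from \cite[\S~6]{CT} and the HBQ case \cite[Theorem~6.5]{CT2} applied to the bounded part. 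So the proof of~(ii) is: descend to $M_0$, observe $M_0$ is PBQ, apply the $\lambda=0$ instance of the comparison valid for PBQ modules, and transport back via $\iota$.

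The main obstacle I anticipate is the descent step in the right generality: showing that an arbitrary $(\sigma,\nabla)$-module over $\Gamma_{\con}[p^{-1}]$ solvable in $\Gamma_{\log,\an,\con}$ genuinely arises, after a finite \'etale residue extension, from $K[\![x]\!]_0$ (or an \'etale analogue), and checking that every piece of structure — connection, Frobenius, the canonical pairing, the log-growth filtration of the coefficient ring (Lemma~\ref{lem:an}~(iii)), and the identification $\mathcal{E}\otimes M=\mathcal{E}\otimes M_0$ — is compatible. If a clean descent is unavailable, the fallback is to run the arguments of \cite{CT} and \cite{CT2} directly over $\Gamma_{\con}[p^{-1}]$, using Lemma~\ref{lem:an} and Lemma~\ref{lem:logan} in place of the corresponding properties of $K[\![x]\!]_{\lambda}$; the structure of those arguments (the estimate relating log-growth of solutions to Frobenius slopes via the companion Frobenius equation) carries over verbatim once one has the multiplicativity and $\sigma$-compatibility of $\ab{\cdot}{\rho}$ on the log-analytic ring, which the preceding lemmas supply.
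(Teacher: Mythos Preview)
Your descent strategy does not work, and this is not a minor technicality: a $(\sigma,\nabla)$-module over $\Gamma_{\con}[p^{-1}]=\mathcal{E}^{\dagger}$ need \emph{not} come from one over $K[\![x]\!]_0$, even after a finite \'etale residue extension. The ring $\mathcal{E}^{\dagger}$ allows Laurent series with overconvergent negative tails, while $K[\![x]\!]_0$ only contains power series; passing from the latter to the former is like passing from a disc to a punctured disc, and modules on a punctured disc need not extend across the puncture. The paper gives an explicit example right after Definition~\ref{dfn:genfil}. Lemma~\ref{lem:bc} is a one-way street: it lets one deduce results over $K[\![x]\!]_0$ from results over $\Gamma_{\con}[p^{-1}]$ (this is how Theorem~\ref{thm:main} is obtained from Theorem~\ref{thm:main2}), not the reverse. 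So your part~(i) argument has no foundation, and your ``fallback'' of rerunning \cite{CT} directly over $\Gamma_{\con}[p^{-1}]$ is in fact what has to be done --- but it is not a trivial transcription, and the paper's new ingredients (Theorem~\ref{thm:cyc} on generic cyclic vectors over $\Gamma_{\con}^{\alg}[p^{-1}]$ and Theorem~\ref{thm:Frob} on the Frobenius equation) are exactly what is needed to make it go through.

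The paper's actual proof is direct. After passing to a power of $\varphi$ so that $q^{s}\in\mathbb{Q}$ for all relevant slopes, it applies Construction~\ref{const:CT}: choose a generic cyclic vector of $\Gamma^{\alg}_{\con}[p^{-1}]\otimes M\spcheck$, express a Frobenius eigenvector $v\in\mathfrak{Sol}(M)$ in that basis, and obtain a Frobenius equation for the last coordinate $y$. Lemma~\ref{lem:CT}~(ii), which rests on Theorem~\ref{thm:Frob}, then says $v\in\mathfrak{Sol}_{s+\s{k}}(M)$ whenever $v$ has slope $s$; together with Grothendieck--Katz specialization ($s\ge -\s{k}=-\lambda_{\max}$) this yields $S_{\lambda-\lambda_{\max}}(\mathfrak{Sol}(M))\subset\mathfrak{Sol}_{\lambda}(M)$, hence~(i). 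For~(ii), the paper does \emph{not} invoke the HPBQ case from \cite{CT2}; instead it observes that $\mathfrak{Sol}_0(M)=(M\spcheck)^{\nabla=0}$ (since $\Fil_0\Gamma_{\log,\an,\con}=\Gamma_{\con}[p^{-1}]$ by Lemma~\ref{lem:an}~(ii)) sits inside $(M_{\mathcal{E}}\spcheck)_0$, and then uses the self-contained Lemma~\ref{lem:PBQ} to see that PBQ forces $(M_{\mathcal{E}}\spcheck)_0$ to be pure of slope $-\lambda_{\max}$, whence $\mathfrak{Sol}_0(M)\subset S_{-\lambda_{\max}}(\mathfrak{Sol}(M))$.
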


\begin{thm}[{Generalization of Theorem~\ref{thm:main}}]\label{thm:main2}
Let $M$ be a $(\sigma,\nabla)$-module over $\Gamma_{\con}[p^{-1}]$ solvable in $\Gamma_{\log,\an,\con}$.
\begin{enumerate}
\item The special log-growth filtration of $M$ is rational and right continuous .
\item Let $\lambda_{\max}$ be the highest Frobenius slope of $M_{\mathcal{E}}$. Assume that $M_{\mathcal{E}}$ is PBQ and the number of the Frobenius slopes of $M_{\mathcal{E}}$ is less than or equal to $2$. Then,
\[
\mathfrak{V}(M)^{\lambda}=(S_{\lambda-\lambda_{\max}}(\mathfrak{V}(M\spcheck)))^{\perp}
\]
for all $\lambda\in\mathbb{R}$.
\end{enumerate}
\end{thm}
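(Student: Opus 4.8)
Since $\mathfrak{V}(M)^{\lambda}=\mathfrak{Sol}_{\lambda}(M)^{\perp}$ and $\mathfrak{Sol}(M)\cong\mathfrak{V}(M\spcheck)$, the rationality and right continuity of the decreasing filtration $\mathfrak{V}(M)^{\bullet}$ are equivalent to those of the increasing filtration $\mathfrak{Sol}_{\bullet}(M)$ of $\mathfrak{Sol}(M)$. By Assumption~\ref{ass:alg} and the structure theory of \S\ref{subsec:phi}, after replacing $\varphi$ by a suitable power $\varphi^{m}$, which changes neither $\nabla$ nor the slope filtration, we may assume $q^{\nu}\in\mathbb{Q}$ for every Frobenius slope $\nu$ of $\mathfrak{V}(M)$ and of $\mathfrak{V}(M\spcheck)$; then both admit bases of Frobenius eigenvectors, and a $\sigma$-submodule is the span of the eigenvectors it contains. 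Since $\mathfrak{Sol}_{\lambda}(M)$ is a $\sigma$-submodule (Lemma~\ref{lem:logan}~(ii)), statement~(i) reduces to the following claim about an arbitrary $(\sigma,\nabla)$-module $N$ over $\Gamma_{\con}[p^{-1}]$ solvable in $\Gamma_{\log,\an,\con}$ (to be applied to $N=M$ and $N=M\spcheck$): every Frobenius eigenvector $v\in\mathfrak{V}(N)$ has a well-defined \emph{exact} log-growth $\lambda(v)\in\mathbb{Q}$, so that $\{\mu\in\mathbb{R}:v\in\Fil_{\mu}\Gamma_{\log,\an,\con}\}=[\lambda(v),\infty)$.

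\textbf{A good cyclic vector and the Frobenius equation.} Fix a cyclic vector $e$ for the underlying $\sigma$-module of $N$, giving a relation $b_{n}\varphi^{n}(e)+b_{n-1}\varphi^{n-1}(e)+\dots+b_{0}e=0$ with $b_{i}\in\Gamma^{\bullet}_{\con}[p^{-1}]$. Expanding an eigenvector $v$, $\varphi(v)=\alpha v$, in the basis $e,\varphi(e),\dots,\varphi^{n-1}(e)$ and comparing coefficients, the coefficient $f$ of $\varphi^{n-1}(e)$ satisfies
\[
b_{n}f^{\sigma^{n}}+b_{n-1}f^{\sigma^{n-1}}+\dots+b_{0}f=0,
\]
and, since the remaining coefficients are obtained from $f$ by applying $\sigma$-powers and multiplying by bounded elements, the exact log-growth of $v$ equals that of $f$. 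The essential point, carried out in \S\ref{sec:gen} after a finite \'etale base change $\Gamma^{l}/\Gamma$ and passage to Kedlaya's analytic rings of \S\ref{sec:lg}, is to choose $e$ so that the Newton polygon of $b_{n}X^{n}+\dots+b_{0}$ (Definition~\ref{dfn:NP}) is \emph{pure}: every plotted point attached to a nonzero $b_{i}$ lies on the polygon, not strictly above it. Purity removes exactly the cancellations that make a naive extension of Nakagawa's computation --- and of his hypothesis that this Newton polygon has $n$ breaks --- break down.

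\textbf{Log-growth of $f$ and part~(i).} With a pure Newton polygon I would generalise Nakagawa's calculation in \S\ref{sec:Frob}: apply $\ab{\cdot}{\rho}$ to the Frobenius equation, use $\ab{\sigma^{i}(\cdot)}{\rho}=\ab{\cdot}{\rho^{q^{i}}}$ and Lemma~\ref{lem:lin} (so $\ab{b_{i}}{\rho}=\rho^{a_{i}}\ab{b_{i}}{1}$ with $a_{i}\in\mathbb{Q}$ for $\rho$ near $1$), and track the dominant balance among the terms $\ab{b_{i}}{\rho}\cdot\ab{f}{\rho^{q^{i}}}$ as $\rho\uparrow 1$. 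Purity pins down which coefficients contribute to the leading term and forces their cancellation, which determines $\ab{f}{\rho}$ up to $\RO{-\lambda}$-equivalence for an explicit $\lambda\in\mathbb{Q}$ depending only on the Newton polygon and on $\alpha$; by Lemma~\ref{lem:logan}~(i) this $\lambda$ is the exact log-growth of $f$, and it is rational and attained. Together with the reduction above this proves Theorem~\ref{thm:main2}~(i); Theorem~\ref{thm:main}~(i) then follows by base change along $K[\![x]\!]_{0}\to\Gamma_{\con}[p^{-1}]$ (Lemma~\ref{lem:bc}).

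\textbf{Part~(ii) and the main obstacle.} Proposition~\ref{prop:basic}~(i) gives $\mathfrak{V}(M)^{\lambda}\subset(S_{\lambda-\lambda_{\max}}(\mathfrak{V}(M\spcheck)))^{\perp}$ for all $\lambda$, so only the reverse inclusion is at issue; if $M_{\mathcal{E}}$ is pure it is essentially Proposition~\ref{prop:basic}. If $M_{\mathcal{E}}$ has two Frobenius slopes $\mu_{1}<\mu_{2}=\lambda_{\max}$, the PBQ hypothesis makes $M_{\mathcal{E}}/M_{\mathcal{E}}^{0}$ pure and lets one realise the bounded part and the pure quotient of $M_{\mathcal{E}}$ explicitly; combining this with part~(i) (now available), the compatibility of $\mathfrak{V}(-)$, $\mathfrak{Sol}(-)$ and the canonical pairings with subquotients, and a rank-one twist normalising the slopes, a d\'evissage along the extension of the pure quotient by the bounded part reduces the reverse inclusion at a general $\lambda$ to the equality at $\lambda=0$ furnished by Proposition~\ref{prop:basic}~(ii); Theorem~\ref{thm:main}~(ii) follows via Lemma~\ref{lem:bc}. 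The main obstacle is entirely in~(i): constructing a cyclic vector whose Frobenius polynomial has a \emph{pure} Newton polygon --- this is precisely why one must leave $K[\![x]\!]_{0}$ for Kedlaya's analytic rings and permit a finite \'etale base change --- and then organising the generalised Nakagawa estimate so that all segments of the polygon and the eigenvalue $\alpha$ are controlled simultaneously; in~(ii) the two-slope hypothesis is exactly what keeps the comparison d\'evissage elementary and independent of the full HPBQ machinery.
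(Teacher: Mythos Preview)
Your outline for part~(i) follows the paper's strategy closely: reduce to Frobenius eigenvectors, choose a generic cyclic vector (over $\Gamma^{\alg}_{\con}[p^{-1}]$, not via a finite \'etale extension $\Gamma^l/\Gamma$ as you write), and analyse the resulting Frobenius equation. However, you overstate what that analysis yields. Theorem~\ref{thm:Frob} does \emph{not} pin down a unique $\lambda\in\mathbb{Q}$ ``depending only on the Newton polygon and on $\alpha$''; it says only that $y$ is either bounded or exactly of log-growth $s+s_j$ for \emph{some} $j$ with $s+s_j>0$, where $s=-\log_q|\alpha|$ and $s_1<\dots<s_k$ are the generic Frobenius slopes. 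Which $j$ occurs depends on the particular solution $y$, not just on the polygon (see Remark~\ref{rem:PBQ}). For part~(i) this ambiguity is harmless: the exact log-growth of any eigenvector lies in the finite rational set $\{0\}\cup\{s+s_j\}$, and rationality and right continuity follow from this alone, as in the paper's proof.

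For part~(ii) the ambiguity is exactly the point, and your d\'evissage sketch diverges from the paper and is where your argument has a gap. The paper does not d\'evisser along the bounded/pure-quotient extension. Instead it argues directly: for a Frobenius eigenvector $v\in\mathfrak{Sol}_\lambda(M)$ of slope $s$, Grothendieck--Katz specialization forces $-s_2\le s\le -s_1$ when $k=2$, so $s+s_1\le 0$. Hence in Lemma~\ref{lem:CT}~(ii) the option $j=1$ is excluded, and either $v\in\mathfrak{Sol}_0(M)$ (handled by Proposition~\ref{prop:basic}~(ii)) or the exact log-growth of $v$ is $s+s_2$, giving $s\le\lambda-s_2$ and thus $v\in S_{\lambda-\lambda_{\max}}(\mathfrak{Sol}(M))$. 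This is precisely why the two-slope hypothesis suffices: it eliminates the ``wrong'' $j$. Your proposed d\'evissage would instead require controlling how the special log-growth filtration behaves in short exact sequences over $\Gamma_{\con}[p^{-1}]$, which is not established in the paper and is not obviously available; the paper's route avoids this entirely.
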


Recall that we may assume Assumption~\ref{ass:alg} to prove Conjecture~\ref{conj:sp} (\S~\ref{subsec:CT}). Hence, Theorem~\ref{thm:main} follows from Theorem~\ref{thm:main2} by Lemma~\ref{lem:bc}. The proofs of Proposition~\ref{prop:basic} and Theorem~\ref{thm:main2} will be given in \S~\ref{sec:pf}.

\begin{rem}
Obviously, one can formulate an analogue of Conjecture~\ref{conj:sp}~(ii) for a $(\sigma,\nabla)$-module over $\Gamma_{\con}[p^{-1}]$ such that $M_{\mathcal{E}}$ is PBQ.
\end{rem}

\subsection{Example: $p$-adic differential equations with nilpotent singularities}\label{subsec:nil}

In \cite[\S~18]{pde}, Kedlaya studies effective bounds on the solutions of $p$-adic differential equations with nilpotent singularities. As an application, he proves a nilpotent singular analogue of Theorem~\ref{thm:CT}~(iii) (\cite[Remark~18.4.4, Theorem~18.4.5]{pde}). In this subsection, we explain that a nilpotent singular analogue of Theorem~\ref{thm:main} follows from Theorem~\ref{thm:main2}.

In the following, we assume $\sigma(x)=x^q$. We define $\Omega^1_{K[\![x]\!]_0}(\log)$ as a $\sigma$-module of rank~$1$ over $K[\![x]\!]_0$ with basis $dx/x$ such that $\sigma^*(1\otimes dx/x):=qdx/x$. Let
\[
d:K[\![x]\!]_0\to \Omega^1_{K[\![x]\!]_0}(\log)=K[\![x]\!]_0dx/x;f\mapsto xdf/dx\cdot dx/x
\]
be the canonical derivation on $K[\![x]\!]_0$. We define a log $(\sigma,\nabla)$-module over $K[\![x]\!]_0$ similarly to \S~\ref{subsec:lgfil} by setting $R=K[\![x]\!]_0$ and $\Omega^1_R=\Omega^1_{K[\![x]\!]_0}(\log)$. 

As in Definition~\ref{dfn:fil2}, we define a log-growth filtration of $K[\![x]\!][\log{x}]$ as
\[
K[\![x]\!][\log{x}]_{\lambda}:=\bigoplus_{i=0}^{\lfloor\lambda\rfloor}K[\![x]\!]_{\lambda-i}(\log{x})^i
\]
for $\lambda\in\mathbb{R}_{\ge 0}$ and $K[\![x]\!][\log{x}]_{\lambda}:=0$ for $\lambda\in\mathbb{R}_{<0}$. For a log $(\sigma,\nabla)$-module $M$ over $K[\![x]\!]_0$, we define
\[
V(M):=(K\{x\}[\log{x}]\otimes_{K[\![x]\!]_0}M)^{\nabla=0}.
\]
By Dwork's trick, $V(M)$ is of dimension $n$ (\cite[Corollary~17.2.4]{pde}). We define a special log-growth filtration $V(M)^{\bullet}$ of $M$ as in \S~\ref{subsec:fil} by replacing $K[\![x]\!]_{\lambda}$ by $K[\![x]\!][\log{x}]_{\lambda}$.

\begin{example}
\begin{enumerate}
\item A $(\sigma,\nabla)$-module over $K[\![x]\!]_0$ can be regarded as a log $(\sigma,\nabla)$-module over $K[\![x]\!]_0$ by identifying $dx$ as $x\cdot dx/x$. The special log-growth filtration of $M$ as a non-log or log $(\sigma,\nabla)$-module coincides with each other.
\item Let $M:=K[\![x]\!]_0e_1\oplus K[\![x]\!]_0e_2$ be the log $(\sigma,\nabla)$-module of rank~$2$ over $K[\![x]\!]_0$ defined by
\[
\nabla(e_1,e_2)=(0,e_1dx/x),\ \varphi(e_1,e_2)=(e_1,qe_2).
\]
Then, $V(M)$ has a basis $\{e_1,-\log{x}\cdot e_1+e_2\}$. Moreover, the Frobenius slopes of $V(M)$ are $0,1$, and we have
\[
V(M)^{\lambda}=
\begin{cases}
V(M)&\text{if }\lambda<0\\
Ke_1&\text{if }0\le\lambda<1\\
0&\text{otherwise.}
\end{cases}
\]
\end{enumerate}
\end{example}

Our main result in this subsection is
\begin{thm}
An analogue of Theorem~\ref{thm:main} for log $(\sigma,\nabla)$-modules over $K[\![x]\!]_0$ holds.
\end{thm}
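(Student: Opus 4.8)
The plan is to reduce the log version of Theorem~\ref{thm:main} to the already-established non-log case, Theorem~\ref{thm:main2}, by the standard device of twisting away the nilpotent (logarithmic) part of the connection. Concretely, given a log $(\sigma,\nabla)$-module $M$ over $K[\![x]\!]_0$ of rank $n$, one first localizes: since $d:f\mapsto x(df/dx)\,dx/x$ has a genuine pole at $x=0$ only through the factor $1/x$, passing to $K[\![x]\!]_0[x^{-1}]$ and then to its $p$-adic completion $\mathcal{E}$ (or, better, to $\mathcal{E}^\dagger=\Gamma_{\con}[p^{-1}]$) turns $M$ into an ordinary $(\sigma,\nabla)$-module over $\mathcal{E}^\dagger$, because over the Robba-type rings the log-connection becomes an honest connection $\nabla=x\,d/dx\otimes dx/x$ with no singularity. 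The key geometric input is that the horizontal sections $V(M)$ computed inside $K\{x\}[\log x]$ (Dwork's trick, \cite[Corollary~17.2.4]{pde}) match the horizontal sections $\mathfrak{V}$ of the localized module computed inside $\Gamma_{\log,\an,\con}=\mathcal{R}[\log x]$, and that the log-growth filtration $K[\![x]\!][\log x]_\lambda$ of Definition in \S\ref{subsec:nil} is precisely $\Fil_\lambda\Gamma_{\log,\an,\con}\cap K\{x\}[\log x]$ — this is the log-enhanced analogue of Lemma~\ref{lem:an}~(iii) and Lemma~\ref{lem:bc}, and it is what makes the reduction faithful at the level of filtrations.

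First I would set up the localization functor carefully: for a log $(\sigma,\nabla)$-module $M$ over $K[\![x]\!]_0$, form $\tilde M:=\Gamma_{\con}[p^{-1}]\otimes_{K[\![x]\!]_0}M$, equipped with the connection induced by $\nabla$ — one checks that $\nabla(\tilde m)=x(d/dx)$-type formula makes $\tilde M$ a bona fide $(\sigma,\nabla)$-module over $\Gamma_{\con}[p^{-1}]$ in the sense of \S\ref{subsec:lgfil} (the factor $1/x$ is a unit in $\mathcal{E}^\dagger$). Second, I would prove the log analogue of Lemma~\ref{lem:bc}: the natural map $V(M)\to\mathfrak{V}(\tilde M)$ is an isomorphism of $\sigma$-modules over $K$ preserving the Frobenius filtrations, which follows because $K\{x\}[\log x]\hookrightarrow\Gamma_{\log,\an,\con}$ is $\sigma$- and $d$-equivariant and both spaces have dimension $n$ by Dwork's trick; and it preserves the log-growth filtrations because $\Fil_\lambda\Gamma_{\log,\an,\con}\cap K\{x\}[\log x]=K[\![x]\!][\log x]_\lambda$, which reduces termwise in the $\log x$-expansion to Lemma~\ref{lem:an}~(iii). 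Third, I would invoke Theorem~\ref{thm:main2}~(i) applied to $\tilde M$ (and Theorem~\ref{thm:main2}~(ii) under the slope hypothesis, noting $\tilde M_{\mathcal{E}}=M_{\mathcal{E}}$ so PBQ and the bound on the number of Frobenius slopes transfer verbatim) to conclude that $\mathfrak{V}(\tilde M)^\bullet=V(M)^\bullet$ is rational and right continuous, and satisfies the comparison $V(M)^\lambda=(S_{\lambda-\lambda_{\max}}(V(M\spcheck)))^\perp$.

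I would handle the two parts of the Example after the Theorem statement as sanity checks rather than as part of the proof, but part~(i) of that Example — the compatibility of the log and non-log special log-growth filtrations for an ordinary $M$ — does need a line: under $dx\leftrightarrow x\cdot dx/x$ the connections and all solution spaces literally coincide, and $K[\![x]\!][\log x]_\lambda$ restricted to log-free series is $K[\![x]\!]_\lambda$, so the two definitions agree and one could alternatively have deduced this case directly from Theorem~\ref{thm:main} without localizing.

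The main obstacle I anticipate is the faithfulness of the log-growth filtration under localization, i.e.\ the identity $\Fil_\lambda\Gamma_{\log,\an,\con}\cap K\{x\}[\log x]=K[\![x]\!][\log x]_\lambda$ together with the surjectivity of $V(M)\to\mathfrak{V}(\tilde M)$. Surjectivity amounts to showing that a horizontal section of $\tilde M$ a priori living in $\mathcal{R}[\log x]$ actually has no polar part, i.e.\ lies in $K\{x\}[\log x]$; this is exactly the content of Dwork's trick / the effective convergence bounds of \cite[\S17]{pde} for nilpotent singularities, so it should be citable, but one must verify that the Frobenius structure used there is compatible with our $\sigma$ (the twist $\sigma(\log x)=q\log x+\sum_i(-1)^{i-1}i^{-1}(\sigma(x)/x^q-1)^i$ enters here, and by Lemma~\ref{lem:logan}~(ii) the correction term is bounded, so it does not affect log-growth). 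The filtration identity itself is the delicate point: one expands $f=\sum_i a_i(\log x)^i$ with $a_i\in K\{x\}$, uses that $\ab{f}{\rho}=\sup_i\ab{a_i}{\rho}(\log(1/\rho))^{-i}$ for $\rho$ near $1$, and must argue that the supremum-over-$i$ structure does not create cancellation between different powers of $\log x$ — precisely the multiplicativity lemma for $\ab{\cdot}{\rho}$ on $\Gamma^\bullet_{\an,r}[\log x]$ already proved in the excerpt gives the needed separation, so this reduces cleanly to the log-free case $\Fil_\lambda\Gamma_{\an,\con}\cap K\{x\}=K[\![x]\!]_\lambda$ of Lemma~\ref{lem:an}~(iii).
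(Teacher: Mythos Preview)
Your approach is essentially identical to the paper's: the paper deduces the theorem from Theorem~\ref{thm:main2} via Lemma~\ref{lem:bc2}, which is exactly the log analogue of Lemma~\ref{lem:bc} you propose, and whose proof the paper declares ``similar to the proof of Lemma~\ref{lem:bc}''. Your discussion of the filtration identity $\Fil_{\lambda}\Gamma_{\log,\an,\con}\cap K\{x\}[\log{x}]=K[\![x]\!][\log{x}]_{\lambda}$ via termwise reduction to Lemma~\ref{lem:an}~(iii), and of surjectivity via the dimension count from Dwork's trick, fills in precisely the details the paper leaves implicit.
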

\begin{proof}
It follows from Theorem~\ref{thm:main2} thanks to Lemma~\ref{lem:bc2} below.
\end{proof}

\begin{lem}\label{lem:bc2}
Let $M$ be a log $(\sigma,\nabla)$-module over $K[\![x]\!]_0$. Then, the $(\sigma,\nabla)$-module $\Gamma_{\con}[p^{-1}]\otimes_{K[\![x]\!]_0}M$ over $\Gamma_{\con}[p^{-1}]$ is solvable in $\Gamma_{\log,\an,\con}$. Moreover, the canonical map
\[
\iota:V(M)\to\mathfrak{V}(\Gamma_{\con}[p^{-1}]\otimes_{K[\![x]\!]_0}M)
\]
is an isomorphism, and preserves the Frobenius filtrations and log-growth filtrations.
\end{lem}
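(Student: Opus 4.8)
The plan is to follow the proof of Lemma~\ref{lem:bc} almost verbatim; the one genuinely new point is that the logarithmic pole at $x=0$ disappears after the base change, since $x$ becomes a unit in $\Gamma_{\con}[p^{-1}]=\mathcal{E}^{\dagger}$. Concretely, because $x\in\Gamma_{\con}[p^{-1}]^{\times}$, the rule $dx/x\mapsto x^{-1}dx$ defines an isomorphism $\Omega^1_{K[\![x]\!]_0}(\log)\otimes_{K[\![x]\!]_0}\Gamma_{\con}[p^{-1}]\cong\Omega^1_{\Gamma_{\con}[p^{-1}]}$, and under the standing hypothesis $\sigma(x)=x^{q}$ of \S~\ref{subsec:nil} it is $\sigma$-equivariant: $\sigma(dx/x)=d\sigma(x)/\sigma(x)=q\,dx/x$ matches the defining relation $\sigma^{*}(1\otimes dx/x)=q\,dx/x$. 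Thus $M':=\Gamma_{\con}[p^{-1}]\otimes_{K[\![x]\!]_0}M$ is an ordinary $(\sigma,\nabla)$-module over $\Gamma_{\con}[p^{-1}]$ in the sense of \S~\ref{subsec:lgfil}, so that $\mathfrak{V}(M')$, $\mathfrak{Sol}(M')$, and $\mathfrak{V}(M')^{\bullet}$ make sense once solvability is established.

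First I would record that the natural inclusion $K\{x\}[\log x]\hookrightarrow\Gamma_{\log,\an,\con}=\mathcal{R}[\log x]$ is a ring homomorphism compatible with $d/dx$, and --- since $\sigma(x)=x^{q}$ forces $\sigma(\log x)=q\log x$ in \emph{both} rings --- also with $\sigma$; on $K\{x\}\subset\mathcal{R}$ this was already noted in the proof of Lemma~\ref{lem:bc}. Tensoring with $M$ and taking horizontal sections gives the canonical, $\varphi$-compatible $K$-linear map $\iota\colon V(M)\to\mathfrak{V}(M')$, which is injective because the ring inclusion is. Now $\dim_{K}V(M)=n$ by Dwork's trick (\cite[Corollary~17.2.4]{pde}), whereas $\dim_{K}\mathfrak{V}(M')\le n$ for the usual reason: $\Gamma_{\log,\an,\con}$ is a differential domain whose ring of $d/dx$-constants is exactly $K$ (since $1/x$ has no primitive in $\mathcal{R}$, a $d/dx$-horizontal element of $\mathcal{R}[\log x]$ has degree $0$ in $\log x$, hence lies in $\mathcal{R}^{d/dx=0}=K$), so horizontal sections of the rank-$n$ module $M'$ span a $K$-space of dimension at most $n$. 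Hence $\iota$ is an isomorphism and $M'$ is solvable in $\Gamma_{\log,\an,\con}$. Being an isomorphism of $\sigma$-modules over $K$, $\iota$ then carries the special Frobenius slope filtration of $V(M)$ onto that of $\mathfrak{V}(M')$; the corresponding statement for the generic Frobenius filtration is the tautology $\mathcal{E}\otimes_{K[\![x]\!]_0}M=\mathcal{E}\otimes_{\Gamma_{\con}[p^{-1}]}M'$.

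It remains to check that $\iota$ preserves the log-growth filtrations, and exactly as in Lemma~\ref{lem:bc} this reduces to the identity
\[
K\{x\}[\log x]\cap\Fil_{\lambda}\Gamma_{\log,\an,\con}=K[\![x]\!][\log x]_{\lambda}\qquad(\lambda\in\mathbb{R}).
\]
Here $K\{x\}[\log x]=\bigoplus_{i}K\{x\}\,(\log x)^{i}$ sits inside $\Gamma_{\log,\an,\con}=\bigoplus_{i}\Gamma_{\an,\con}\,(\log x)^{i}$ compatibly with the grading by powers of $\log x$, and both $\Fil_{\lambda}\Gamma_{\log,\an,\con}=\bigoplus_{i=0}^{\lfloor\lambda\rfloor}\Fil_{\lambda-i}\Gamma_{\an,\con}\,(\log x)^{i}$ and $K[\![x]\!][\log x]_{\lambda}=\bigoplus_{i=0}^{\lfloor\lambda\rfloor}K[\![x]\!]_{\lambda-i}\,(\log x)^{i}$ are defined degreewise, so the identity follows by comparing coefficients from $K\{x\}\cap\Fil_{\mu}\Gamma_{\an,\con}=K[\![x]\!]_{\mu}$, which is Lemma~\ref{lem:an}~(iii) (both sides $=0$ for $\mu<0$). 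The same ring inclusion induces a compatible isomorphism on solution spaces $\Sol(M)\cong\mathfrak{Sol}(M')$, under which $\Sol_{\lambda}(M)$ corresponds to $\mathfrak{Sol}_{\lambda}(M')$ by the displayed identity; taking orthogonal complements with respect to the perfect pairings $V(M)\otimes_{K}\Sol(M)\to K$ and $\mathfrak{V}(M')\otimes_{K}\mathfrak{Sol}(M')\to K$ then identifies $V(M)^{\lambda}$ with $\mathfrak{V}(M')^{\lambda}$, completing the proof.

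I do not anticipate a genuine obstacle here; the hard part, such as it is, is purely a matter of care: verifying the $\sigma$-equivariance of the identification $dx/x\leftrightarrow x^{-1}dx$ (this is exactly where $\sigma(x)=x^{q}$ is used) and tracking how $\Fil_{\lambda}\Gamma_{\log,\an,\con}$ restricts to $K\{x\}[\log x]$, which thanks to the $(\log x)$-grading reduces cleanly to Lemma~\ref{lem:an}~(iii).
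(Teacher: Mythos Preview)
Your proof is correct and follows exactly the approach the paper has in mind: its own proof reads in full ``Similar to the proof of Lemma~\ref{lem:bc}'', and you have carried out that similarity in detail, including the two points specific to the log setting (that the logarithmic pole becomes harmless over $\Gamma_{\con}[p^{-1}]$ since $x$ is a unit there, and that the filtration comparison reduces degreewise in $\log x$ to Lemma~\ref{lem:an}~(iii)).
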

\begin{proof}
Similar to the proof of Lemma~\ref{lem:bc}.
\end{proof}

\section{Generic cyclic vector}\label{sec:gen}

In this section, we prove a key technical result in this paper concerned with a $\sigma$-module over $\Gamma_{\con}^{\alg}[p^{-1}]$.

\begin{dfn}\label{dfn:cyc}
Let $R$ be either $\Gamma_{\con}^{\alg}[p^{-1}]$ or $\Gamma^{\alg}[p^{-1}]$.
\begin{enumerate}
\item For $f(\sigma)=a_0+a_1\sigma+\dots+a_n\sigma^n\in R\{\sigma\}$ a twisted polynomial, we define the Newton polygon $\NP(f(\sigma))$ of $f(\sigma)$ as the boundary of the lower convex hull of the set of points
\[
\{(i,-\log_q{\ab{a_i}{1}});0\le i\le n\}.
\]
A slope of $\NP(f(\sigma))$ is called a slope of $f(\sigma)$ (cf. \cite[2.1.3]{pde}; note that the Newton polygon in \cite{pde} is defined as the boundary of the lower convex hull of $\{(-i,-\log_q{\ab{a_i}{1}});0\le i\le n\}$. Consequently, our slopes coincide with $-1$ times the slopes in \cite{pde}). We consider the following condition $(*)$ on $f(\sigma)$:
\[
(*):\text{ each point }(i,-\log_q{\ab{a_i}{1}})\text{ belongs to }\NP(f(\sigma)).
\]
\item Let $M$ be a $\sigma$-module of rank $n$ over $R$. When $R=\Gamma_{\con}^{\alg}[p^{-1}]$, we call a Frobenius slope (resp. the Newton polygon) of $\Gamma_{\con}^{\alg}[p^{-1}]\otimes_RM$ a generic Frobenius slope (resp. the generic Newton polygon) of $M$. We say that an element $e\in M$ is a cyclic vector if $e,\varphi(e),\dots,\varphi^{n-1}(e)$ is a basis of $M$ over $R$. For a cyclic vector $e$, we have a unique relation
\[
\varphi^n(e)=-(a_{n-1}\varphi^{n-1}(e)+\dots+a_0 e)
\]
with $a_i\in R$. We set $f_e(\sigma):=a_0+a_{1}\sigma+\dots+\sigma^n\in R\{\sigma\}$. Note that $\NP(f_e(\sigma))$ coincides with the (generic) Frobenius Newton polygon of $M\spcheck$ (\cite[14.5.7]{pde}).

We say that a cyclic vector $e\in M$ is generic if $f_e(\sigma)$ satisfies the condition (*).
\end{enumerate}
\end{dfn}

\begin{thm}\label{thm:cyc}
Let $M$ be a $\sigma$-module over $\Gamma^{\alg}[p^{-1}]$ (resp. $\Gamma_{\con}^{\alg}[p^{-1}]$). Assume $q^{s}\in \mathbb{Q}$ for any (resp. generic) Frobenius slope $s$ of $M$. Then, there exists a generic cyclic vector of $M$.
\end{thm}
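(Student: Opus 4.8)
\textbf{Proof plan for Theorem~\ref{thm:cyc}.}

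The plan is to reduce to the case $R=\Gamma^{\alg}[p^{-1}]$ first, then produce the generic cyclic vector explicitly as a suitable $\Gamma^{\alg}$-linear combination of Frobenius eigenvectors. Since $k_K$ is algebraically closed (Assumption~\ref{ass:alg}) and the relevant Frobenius slopes lie in $\frac{1}{\log_p q}\Z$, by the structure theory recalled in \S~\ref{subsec:phi} the $\sigma$-module $M\otimes_R\Gamma^{\alg}[p^{-1}]$ admits a basis $v_1,\dots,v_n$ of Frobenius eigenvectors, say $\varphi(v_j)=q^{s_j}v_j$ with $s_1\le\dots\le s_n$ (so the $s_j$, with multiplicity, are the generic Frobenius slopes of $M$, equivalently the slopes of $\NP(f_e(\sigma))$ for the dual after the usual reversal). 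For a general $e=\sum_j c_j v_j$ with $c_j\in\Gamma^{\alg}[p^{-1}]^{\times}$, one computes $\varphi^i(e)=\sum_j c_j^{\sigma^i} q^{i s_j} v_j$, and the relation $\varphi^n(e)=-\sum_{i<n}a_i\varphi^i(e)$ amounts to inverting the ``twisted Vandermonde'' matrix $(c_j^{\sigma^i}q^{i s_j})_{0\le i\le n-1,\,1\le j\le n}$. The coefficients $a_i$ are then explicit rational expressions in the $c_j$, their $\sigma$-iterates, and the $q^{s_j}$; in particular $-\log_q\ab{a_i}{1}$ is controlled by $\ab{c_j}{1}$ (which is $\sigma$-invariant up to the $q$-power scaling that is already built into Lemma~\ref{lem:lin}) and by the $s_j$.

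The key step is to choose the $c_j$ so that condition $(*)$ holds, i.e.\ every point $(i,-\log_q\ab{a_i}{1})$ lies on $\NP(f_e(\sigma))$. Following the idea flagged in the introduction (and the analogue for differential operators), the natural choice is $c_j=u^{m_j}$ for a fixed $u\in\Gamma^{\alg}$ with $\ab{u}{1}$ a suitably small positive power of $|\pi_K|$ (e.g.\ a root of $x$ in $\Gamma^{\alg}$, using that the value group is divisible here, or more simply a power of $\pi_K$ if working $q$-adically suffices) and carefully spread exponents $m_1,\dots,m_n$. With this choice the entries of the twisted Vandermonde matrix have valuations in ``general position,'' so its inverse has entries whose valuations are exactly the ones predicted by the dominant-term/Cramer's-rule estimate, with no cancellation; that forces the plotted points $(i,-\log_q\ab{a_i}{1})$ onto the lower convex hull. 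Concretely, one shows by a direct valuation computation that $-\log_q\ab{a_i}{1}$ equals the value of the piecewise-linear function with slopes $-s_1,\dots,-s_n$ (each repeated according to multiplicity) evaluated at $i$, which is precisely the statement that the point lies on $\NP(f_e(\sigma))$ and that this Newton polygon has the prescribed slopes (matching the reversed dual, as required by Definition~\ref{dfn:cyc}).

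The main obstacle is verifying the no-cancellation claim uniformly: a priori the determinant expansions defining the $a_i$ involve many terms, and one must guarantee that for the chosen $c_j$ a single term strictly dominates each relevant sub-determinant under $\ab{\cdot}{1}$, so that the ultrametric inequality is an equality. This is where the freedom to choose the exponents $m_j$ (and, if needed, to pass to a finite extension to extract roots so the $m_j$ can be taken as a rapidly growing or generic sequence) is used; a clean way to organize it is to treat $u$ as a formal variable and observe that the determinant identities are polynomial, so genericity of the ``weights'' $m_j s_\ell$ rules out coincidental cancellations, and then specialize. Finally, for the overconvergent case $R=\Gamma_{\con}^{\alg}[p^{-1}]$, I would note that a generic cyclic vector of $\Gamma^{\alg}[p^{-1}]\otimes_R M$ can be approximated by an element of $M$ (the cyclic vector condition and condition $(*)$ are open conditions with respect to $\ab{\cdot}{1}$, since both amount to non-vanishing and to strict inequalities among valuations, and $\Gamma_{\con}^{\alg}[p^{-1}]$ is dense in $\Gamma^{\alg}[p^{-1}]$ for $\ab{\cdot}{1}$), and that by Lemma~\ref{lem:lin} the values $\ab{a_i}{\rho}$ for $\rho$ near $1$ are governed by $\ab{a_i}{1}$, so the generic Newton polygon is unchanged; hence a sufficiently good approximation is still a generic cyclic vector.
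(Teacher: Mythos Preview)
Your reduction from $\Gamma^{\alg}_{\con}[p^{-1}]$ to $\Gamma^{\alg}[p^{-1}]$ via density and openness is correct and is exactly what the paper does (the paper makes the openness precise via Construction~\ref{const:Ked} and Lemma~\ref{lem:conti}, expressing the coefficients $a_i$ as continuous functions of the coordinates of $e$, but the underlying idea is the same as yours).

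For the complete case $R=\Gamma^{\alg}[p^{-1}]$, however, your approach has a genuine gap: you explicitly flag the ``no-cancellation'' verification for the inverse of the twisted Vandermonde matrix as the ``main obstacle,'' and your sketch (treat $u$ as a formal variable, invoke genericity of the weights $m_j$) does not settle it. The cancellation you must rule out is among monomials in the $\sigma$-iterates $c_j^{\sigma^i}$ whose exponents interact through both the $m_j$ and the $q$-power twisting, and the Cramer's-rule expressions for the $a_i$ involve ratios of such determinants; this needs an actual argument. The paper bypasses this entirely with a much cleaner trick: rather than searching for a good cyclic vector inside $M$ via an eigenbasis, it first writes down the twisted polynomial
\[
f(\sigma):=(\sigma-q^{s_1}x)(\sigma-q^{s_2}x)\cdots(\sigma-q^{s_n}x)\in R\{\sigma\}
\]
and verifies condition~$(*)$ for $f(\sigma)$ by direct expansion (Lemma~\ref{lem:calc}): the coefficient $a_{n-i}$ is a sum over $i$-element subsets $\{j(1)<\dots<j(i)\}\subset\{1,\dots,n\}$ of terms $\pm q^{s_{j(1)}+\dots+s_{j(i)}}x^{q^{j(1)-1}+\dots+q^{j(i)-1}}$, and the exponents of $x$ appearing are pairwise distinct by uniqueness of $q$-ary representation, so there is literally no cancellation and $\ab{a_{n-i}}{1}=q^{-(s_1+\dots+s_i)}$ on the nose. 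Then $M':=R\{\sigma\}/R\{\sigma\}f(\sigma)$ has $\bar 1$ as a generic cyclic vector tautologically, and Dieudonn\'e--Manin gives $M\cong M'$. In effect the paper chooses the annihilator polynomial first and then recognizes the associated module, rather than choosing a vector and computing its annihilator; this is what makes the non-cancellation trivial.
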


In the next subsection, we see that there exists a non-empty open subset $U$ of $M$ such that $v\in U$ is a generic cyclic vector. In this sense, there exist a number of cyclic vectors satisfying the condition (*). Therefore, the condition (*) is referred to as being generic.

\subsection{Proof of Theorem~\ref{thm:cyc}}

To prove Theorem~\ref{thm:cyc}, we first construct a generic cyclic vector over $\Gamma^{\alg}[p^{-1}]$. Then, we deform it to obtain a generic cyclic vector over $\Gamma^{\alg}_{\con}[p^{-1}]$. We first recall Kedlaya's algorithm to compute an annihilator of an element of a $\sigma$-module over $\Gamma^{\alg}[p^{-1}]$ (\cite[5.2.4]{Doc}).

\begin{construction}\label{const:Ked}
Let $R:=\Gamma^{\alg}[p^{-1}]$. Let $M$ be a $\sigma$-module of rank $n$ over $R$ with Frobenius slopes $\s{1}\le \dots\le \s{n}$ with multiplicities. Assume $q^{\s{i}}\in\mathbb{Q}$ for all $i$ (\S~\ref{subsec:phi}). Then, we can choose an $R$-basis $e_1,\dots,e_n$ of $M$ such that $\varphi(e_i)=q^{\s{i}} e_i$ for all $i$. Fix $x_1\dots,x_n\in R$ and set $v:=x_1e_1+\dots+x_ne_n$. We define $v_l\in M$ for $1\le l\le n$ by induction on $l$. Set $v_1:=v$. Given $v_l$, write $v_l=x_{l,1}e_1+\dots+x_{l,n}e_n$ with $x_{l,i}\in R$ and define
\[
b_l:=
\begin{cases}
q^{\s{l}}\cdot \sigma(x_{l,l})/x_{l,l}&\text{if }x_{l,l}\neq 0\\
0&\text{otherwise}
\end{cases}
\]
and $v_{l+1}:=(\varphi-b_l)v_l$. Then, we have $v_l\in Re_{l}+\dots+Re_n$ and $(\varphi-b_{n})\dots (\varphi-b_1)v=0$. We write
\[
(\sigma-b_{n})\dots (\sigma-b_1)=\sigma^n+c_{n-1}\sigma^{n-1}+\dots+c_0,\ c_i\in R
\]
in $R\{\sigma\}$. By construction, we may regard $x_{l,i}=x_{l,i}(\bvec{x})$, $b_l=b_l(\bvec{x})$, and $c_i=c_i(\bvec{x})$ as functions of $\bvec{x}=(x_1,\dots,x_n)\in R^n$ with values in $R$. We also regard $v=v(\bvec{x})$ as a function of $\bvec{x}$ with values in $M$.
\end{construction}

\begin{lem}\label{lem:conti}
We retain the notation in Construction~\ref{const:Ked}.
\begin{enumerate}
\item For $\bvec{x}\in R^n$, $v(\bvec{x})$ is a cyclic vector of $M$ if and only if $x_{1,1}(\bvec{x})x_{2,2}(\bvec{x})\dots x_{n,n}(\bvec{x})\neq 0$.
\item For $\bvec{x}\in R^n$, $v(\bvec{x})$ is a generic cyclic vector of $M$ if and only if $x_{1,1}(\bvec{x})x_{2,2}(\bvec{x})\dots x_{n,n}(\bvec{x})\neq 0$ and $-\log_q{\ab{c_i(\bvec{x})}{1}}=\s{1}+\dots+\s{n-i}$ for all $i$.
\item Let $\bvec{x}^{(0)}\in R^n$. Assume that $b_1(\bvec{x}^{(0)}),\dots,b_{n}(\bvec{x}^{(0)})$ are all non-zero. Then, there exists an open neighborhood $U\subset R^n$ of $\bvec{x}^{(0)}$ (with respect to the topology induced by $\ab{\cdot}{1}$) such that all $b_l$ and $x_{l,i}$ are continuous on $U$. In particular, all $c_i$ are also continuous on $U$.
\end{enumerate}
\end{lem}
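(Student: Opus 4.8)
\emph{The plan.} All three assertions should fall out of Construction~\ref{const:Ked} by bookkeeping, using that $R$ (being $\Gamma^{\alg}_{\con}[p^{-1}]$ or $\Gamma^{\alg}[p^{-1}]$) is a field and that $\sigma$ is an isometry for $\ab{\cdot}{1}$ (which follows from $\ab{\sigma(\cdot)}{\rho}=\ab{\cdot}{\rho^q}$ by letting $\rho\uparrow 1$). For (i) the plan is pure linear algebra: I would first note that $v_l=(\varphi-b_{l-1})\cdots(\varphi-b_1)v$ equals $\varphi^{l-1}(v)$ plus an $R$-linear combination of $v,\varphi(v),\dots,\varphi^{l-2}(v)$, so the families $(v,\varphi(v),\dots,\varphi^{n-1}(v))$ and $(v_1,\dots,v_n)$ are related by a unitriangular matrix over $R$ and span the same $R$-submodule of $M$; hence $v=v(\bvec{x})$ is a cyclic vector iff $(v_1,\dots,v_n)$ is an $R$-basis of $M$. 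Since $v_l\in Re_l+\dots+Re_n$ with $e_l$-coordinate $x_{l,l}(\bvec{x})$, the matrix of $(v_1,\dots,v_n)$ in the basis $(e_1,\dots,e_n)$ is triangular with diagonal $(x_{1,1}(\bvec{x}),\dots,x_{n,n}(\bvec{x}))$, which over the field $R$ is invertible precisely when $x_{1,1}(\bvec{x})\cdots x_{n,n}(\bvec{x})\neq 0$.

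\emph{Part (ii).} Granting (i), I would assume $v(\bvec{x})$ is a cyclic vector, so every $x_{l,l}(\bvec{x})$ is non-zero and $b_l=q^{\s{l}}\sigma(x_{l,l})/x_{l,l}$; the isometry property $\ab{\sigma(\cdot)}{1}=\ab{\cdot}{1}$ then forces $\ab{b_l}{1}=q^{-\s{l}}$, i.e.\ the factor $(\sigma-b_l)$ of $f_e(\sigma)$ has Newton polygon the segment from $(0,\s{l})$ to $(1,0)$. Expanding $f_e(\sigma)=(\sigma-b_n)\cdots(\sigma-b_1)$ and invoking multiplicativity of Newton polygons of twisted polynomials over a valued field (equivalently, using that $\NP(f_e(\sigma))$ is the Frobenius Newton polygon of $M\spcheck$, \cite[14.5.7]{pde}), I get that $\NP(f_e(\sigma))$ runs from $(0,\s{1}+\dots+\s{n})$ to $(n,0)$ with slopes $-\s{n}\le\dots\le-\s{1}$, so its height over the abscissa $i$ equals $\s{1}+\dots+\s{n-i}$. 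Since the point $(i,-\log_q\ab{c_i(\bvec{x})}{1})$ always lies on or above $\NP(f_e(\sigma))$, the condition $(*)$ — that every such point lie on $\NP(f_e(\sigma))$ — is equivalent to $-\log_q\ab{c_i(\bvec{x})}{1}=\s{1}+\dots+\s{n-i}$ for all $i$; combined with (i) this gives the stated equivalence.

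\emph{Part (iii).} Here I would argue by induction on $l$, showing that $x_{l,1}(\bvec{x}),\dots,x_{l,n}(\bvec{x})$ and $b_1(\bvec{x}),\dots,b_{l-1}(\bvec{x})$ are continuous on a shrinking open neighborhood of $\bvec{x}^{(0)}$ for the topology induced by $\ab{\cdot}{1}$. The base case is clear because $x_{1,i}(\bvec{x})=x_i$ is a coordinate projection. For the inductive step, the hypothesis $b_l(\bvec{x}^{(0)})\neq 0$ means $x_{l,l}(\bvec{x}^{(0)})\neq 0$, so after intersecting with the open set $\{x_{l,l}\neq 0\}$ the expression $b_l=q^{\s{l}}\sigma(x_{l,l})/x_{l,l}$ is continuous (continuity of $\sigma$ and of inversion on $R^{\times}$), and then $x_{l+1,i}=q^{\s{i}}\sigma(x_{l,i})-b_lx_{l,i}$ is continuous. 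After $n$ steps one obtains the desired $U$, and the $c_i$ are then continuous on $U$ because each $c_i$ is a fixed non-commutative polynomial expression in $b_1,\dots,b_n$ and their $\sigma$-iterates.

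\emph{Expected difficulty.} None of these steps presents a genuine obstacle; the one place that needs care is the Newton-polygon bookkeeping in (ii), namely observing that the isometry property $\ab{\sigma(\cdot)}{1}=\ab{\cdot}{1}$ already pins down $\ab{b_l}{1}=q^{-\s{l}}$, hence the exact shape of $\NP(f_e(\sigma))$, \emph{before} any genericity hypothesis is imposed, so that $(*)$ collapses to the displayed numerical identity.
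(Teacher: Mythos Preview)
Your proof is correct and follows essentially the same route as the paper: the unitriangular change of basis for (i), the identification of $\NP(f_e(\sigma))$ via its known slopes $-\s{n}\le\dots\le-\s{1}$ for (ii), and the same inductive construction of shrinking neighborhoods for (iii). Your treatment of (ii) is in fact more explicit than the paper's one-line justification, spelling out why $\ab{b_l}{1}=q^{-\s{l}}$ and hence why the height of $\NP(f_e(\sigma))$ over abscissa $i$ is exactly $\s{1}+\dots+\s{n-i}$; one harmless slip is that in Construction~\ref{const:Ked} the ring $R$ is specifically $\Gamma^{\alg}[p^{-1}]$, not $\Gamma^{\alg}_{\con}[p^{-1}]$.
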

\begin{proof}
\begin{enumerate}
\item By construction, there exists an upper triangular matrix $T$ whose diagonals are $(1,\dots,1)$ such that $(v_1,v_2,\dots,v_{n})=(v,\varphi(v),\dots,\varphi^{n-1}(v))T$. Since $\{x_{l,i}\}_{l,i}$ is an upper triangular matrix, we obtain the assertion.
\item It follows from (i) and the fact that the slopes of $\sigma^n+c_{n-1}\sigma^{n-1}+\dots+c_0$ are $-\s{n}\le\dots\le -\s{1}$ with multiplicities.
\item By induction on $l\in\{1,\dots,n\}$, we construct an open neighborhood $U_l\subset R^n$ of $\bvec{x}^{(0)}$ such that $x_{l,1},\dots,x_{l,n}$ and $b_l$ are continuous on $U_l$, and $x_{l,l}$ is non-zero on $U_l$. Once we construct the $U_l$'s, $U:=U_1\cap\dots\cap U_n$ satisfies the desired condition. First, note that $x_{l,l}(\bvec{x}^{(0)})\neq 0$ for all $l$ by assumption. The assertion is trivial for $l=1$ by setting $U_1:=\{\bvec{x}\in R^n;x_1(\bvec{x})\neq 0\}$. Given $U_{l-1}$, let $U'_l:=U_{l-1}\cap\{\bvec{x}\in R^n;x_{l-1,l-1}(\bvec{x})\neq 0\}$, which is an open neighborhood of $\bvec{x}^{(0)}$. By the induction hypothesis, $x_{l,i}=\sigma(x_{l-1,i})q^{\s{i}}-b_{l-1} x_{l-1,i}$ is continuous on $U'_l$. We set $U_l:=U'_l\cap\{\bvec{x}\in R^n;x_{l,l}(\bvec{x})\neq 0\}$. Then, $U_l\subset R^n$ is an open neighborhood of $\bvec{x}^{(0)}$ on which $b_l$ is continuous on $U_l$ as desired.
\end{enumerate}
\end{proof}

\begin{lem}\label{lem:calc}
Let $\s{1}\le \s{2}\le \dots \le \s{n}$ be rational numbers such that $q^{\s{i}}\in\mathbb{Q}$. Then, the slopes of $f(\sigma):=(\sigma-q^{\s{1}}x)\dots (\sigma-q^{\s{n}}x)\in \Gamma[p^{-1}]\{\sigma\}$ are $-\s{n}\le\dots\le -\s{1}$ with multiplicities. Moreover, $f(\sigma)$ satisfies the condition~(*).
\end{lem}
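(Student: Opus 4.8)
\emph{Strategy.} The plan is to expand $f(\sigma)=\sigma^n+a_{n-1}\sigma^{n-1}+\dots+a_0$ explicitly enough to compute every partial valuation $-\log_q\ab{a_i}{1}$, and then to observe that the vertices $(i,-\log_q\ab{a_i}{1})$ already form a convex polygonal path whose slopes are the $-s_j$. Write $c_j:=q^{s_j}x\in\Gamma[p^{-1}]$, so $\ab{c_j}{1}=q^{-s_j}$. Multiplying $(\sigma-c_1)\cdots(\sigma-c_{n-1})$ on the right by $\sigma-c_n$ and using the commutation rule $\sigma c=c^{\sigma}\sigma$ in $\Gamma[p^{-1}]\{\sigma\}$, a straightforward induction on $n$ yields
\[
a_i=(-1)^{n-i}\!\!\sum_{\substack{T\subseteq\{1,\dots,n\}\\ |T|=n-i}}\ \prod_{j\in T}\sigma^{m_j(T)}(c_j),\qquad m_j(T):=\#\{\,j'<j:\ j'\notin T\,\}.
\]
As $\sigma$ fixes $q^{s_j}\in\mathbb{Q}_p$, this reads $a_i=(-1)^{n-i}\sum_T q^{\Sigma_T}x_T$ with $\Sigma_T:=\sum_{j\in T}s_j$ and $x_T:=\prod_{j\in T}\sigma^{m_j(T)}(x)$. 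Since $\sigma(x)\equiv x^q\bmod\pi_K$, each $\sigma^m(x)\in\Gamma$ reduces to $x^{q^m}\neq 0$ in $k_K(\!(x)\!)$, hence is a unit, so $\ab{x_T}{1}=1$ and $\ab{a_i}{1}\le q^{-S_{n-i}}$, where $S_{n-i}:=s_1+\dots+s_{n-i}$ is the minimum of $\Sigma_T$ over all $(n-i)$-subsets $T$.

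\emph{The equality $\ab{a_i}{1}=q^{-S_{n-i}}$.} This is the heart of the matter: we must rule out cancellation among the top-norm terms. Since $q^{s}\in\mathbb{Q}$ means $q^{s}$ is an integral power of $p$, write $q^{S_{n-i}}=p^{N_0}$ and $q^{\Sigma_T}=p^{N_T}$ with $N_T\ge N_0$ in $\mathbb{Z}$; then $a_i/p^{N_0}=(-1)^{n-i}\sum_T p^{N_T-N_0}x_T\in\Gamma$, and reduction modulo $\pi_K\Gamma$ kills every term with $\Sigma_T>S_{n-i}$, so that, writing $d_T:=\sum_{j\in T}q^{m_j(T)}$,
\[
\overline{a_i/p^{N_0}}=(-1)^{n-i}\!\!\sum_{\Sigma_T=S_{n-i}}x^{d_T}\in k_K(\!(x)\!)
\]
(the index set being non-empty, since it contains $T_0:=\{1,\dots,n-i\}$). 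Now $d_T\ge|T|=n-i$, with equality iff every $m_j(T)=0$, i.e.\ iff $T$ is downward closed, i.e.\ iff $T=T_0$; hence the right-hand side has a unique monomial of least $x$-degree, namely $(-1)^{n-i}x^{n-i}$, whose coefficient $(-1)^{n-i}=\pm1$ is a unit of $k_K$. Therefore $\overline{a_i/p^{N_0}}\neq0$, so $a_i/p^{N_0}\in\Gamma^{\times}$ and $\ab{a_i}{1}=\ab{p^{N_0}}{1}=q^{-S_{n-i}}$, that is $-\log_q\ab{a_i}{1}=S_{n-i}$ for all $0\le i\le n$.

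\emph{Assembling the Newton polygon.} The vertices $(i,S_{n-i})$, $i=0,\dots,n$, satisfy $S_{n-(i+1)}-S_{n-i}=-s_{n-i}$, so the successive slopes of the polygonal path joining them are $-s_n,-s_{n-1},\dots,-s_1$, which is nondecreasing because $s_1\le\dots\le s_n$. Hence that path is already convex and therefore coincides with $\NP(f(\sigma))$; in particular every point $(i,-\log_q\ab{a_i}{1})$ lies on $\NP(f(\sigma))$, which is precisely condition~$(*)$, and the slopes of $\NP(f(\sigma))$ counted with multiplicity (a segment of slope $-s$ having horizontal length $\#\{j:s_j=s\}$) are exactly $-s_n\le\dots\le -s_1$. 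The only genuinely delicate step is the non-cancellation argument in the middle paragraph; the rest is bookkeeping. An inductive variant---peel off the rightmost factor $\sigma-q^{s_n}x$ and propagate $\overline{a_i/p^{N_0}}$ along the way---works equally well and comes to the same computation.
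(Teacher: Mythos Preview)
Your proof is correct and follows essentially the same approach as the paper: expand $f(\sigma)$ in the twisted polynomial ring, compute each $-\log_q\ab{a_i}{1}$ as the partial sum $S_{n-i}=s_1+\dots+s_{n-i}$, and observe that the resulting points already trace out a convex polygonal path. The paper's proof is terser---it writes down an explicit monomial expansion (implicitly taking $\sigma(x)=x^q$) and asserts the norm identity directly---whereas you supply the non-cancellation argument (unique lowest-degree monomial after reduction modulo $\pi_K$) that the paper leaves to the reader; this extra care also makes your argument valid for an arbitrary Frobenius lift $\sigma$ with $\sigma(x)\equiv x^q\bmod\pi_K$, not just the standard one.
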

\begin{proof}
We write
\[
f(\sigma)=\sigma^n+a_{n-1}\sigma^{n-1}+\dots+a_0,\ a_i\in \Gamma[p^{-1}].
\]
Then, we have
\[
a_{n-i}=\sum_{1\le j(1)<\dots<j(i)\le n}(-1)^iq^{\s{j(1)}+\dots+\s{j(i)}}x^{q^{j(1)-1}+\dots+q^{j(i)-1}}.
\]
Hence, $-\log_q{\ab{a_{n-i}}{1}}=\s{1}+\dots+\s{i}$, which implies the assertion.
\end{proof}

\begin{proof}[{Proof of Theorem~\ref{thm:cyc}}]

We first consider the case where $M$ is a $\sigma$-module over $R:=\Gamma^{\alg}[p^{-1}]$. Let $\s{1}\le \s{2}\le \dots\le \s{n}$ be the Frobenius slopes of $M$ with multiplicities. By Lemma~\ref{lem:calc}, the slopes of the $\sigma$-module $M':=R\{\sigma\}/R\{\sigma\}(\sigma-q^{\s{1}}x)(\sigma-q^{\s{2}}x)\dots (\sigma-q^{\s{n}}x)$ are $\s{1},\dots,\s{n}$ with multiplicities. Recall that $\sigma$-modules over $R$ are classified by its slopes with multiplicities by Dieudonn\'e-Manin theorem (\cite[14.6.3]{pde}). Hence, there exists an isomorphism of $\sigma$-modules $M\cong M'$. By Lemma~\ref{lem:calc}, $\bar{1}\in M'$ is a generic cyclic vector.

We consider the case where $M$ is a $\sigma$-module over $\Gamma^{\alg}_{\con}[p^{-1}]$. We have only to prove that there exists $f\in M$ which is a generic cyclic vector of $M^{\alg}:=\Gamma^{\alg}[p^{-1}]\otimes_{\Gamma^{\alg}_{\con}[p^{-1}]}M$. We apply Construction~\ref{const:Ked} to $M^{\alg}$. We choose a generic cyclic vector $e$ of $M^{\alg}$ and write $e=v(\bvec{x}^{(0)})$ with $\bvec{x}^{(0)}\in R^n$. By Lemma~\ref{lem:conti}~(ii) and (iii), there exists an open neighborhood $U\subset R^n$ of $\bvec{x}^{(0)}$ such that $v(\bvec{x})$ is a generic cyclic vector of $M^{\alg}$ for all $\bvec{x}\in U$. We choose a $\Gamma^{\alg}_{\con}[p^{-1}]$-basis $f_1,\dots,f_n$ of $M$. For $\bvec{y}=(y_1,\dots,y_n)\in R^n$, we define $w(\bvec{y}):=y_1f_1+\dots+y_nf_n$. For $\bvec{x}\in R^n$, there exists a unique $\bvec{y}=\bvec{y}(\bvec{x})\in R^n$ such that $v(\bvec{x})=w(\bvec{y})$, and the map $\bvec{x}\mapsto \bvec{y}(\bvec{x})$ is a homeomorphism (\cite[1.3.3]{pde}). Hence, there exists an open neighborhood $V\subset R^n$ of $\bvec{y}(\bvec{x}^{(0)})$ such that $w(\bvec{y})$ is a generic cyclic vector of $M^{\alg}$ for all $\bvec{y}\in V$. Since $\Gamma^{\alg}_{\con}$ is dense in $\Gamma^{\alg}$, $w(\bvec{y})\in M$ for $\bvec{y}\in V\cap (\Gamma^{\alg}_{\con}[p^{-1}])^n\neq \phi$ is a generic cyclic vector of $M^{\alg}$.
\end{proof}

\section{Frobenius equation and log-growth}\label{sec:Frob}

In \cite[\S~7.2]{CT}, Chiarellotto and Tsuzuki compute the log-growth of a solution $y$ of a Frobenius equation
\[
ay+by^{\sigma}+cy^{\sigma^2}=0,\ a,b,c\in K[\![x]\!]_0.
\]
In \cite{Nak}, Nakagawa proves a generalization of Chiarellotto-Tsuzuki's result for a Frobenius equation
\[
a_0y+a_1y^{\sigma}+\dots+a_ny^{\sigma^n}=0,\ a_i\in\mathcal{E}^{\dagger}
\]
under the assumption that the number of breaks of the Newton polygon of $a_0+a_1\sigma+\dots+a_n\sigma^n$ is equal to $n$. We generalize Nakagawa's result without any assumption on the Newton polygon:

\begin{thm}[{A generalization of Nakagawa's theorem (\cite[1.1]{Nak})}]\label{thm:Frob}
Let
\[
f(\sigma)=a_0+a_1\sigma+\dots+a_n\sigma^n\in \Gamma_{\con}^{\alg}[p^{-1}]\{\sigma\},\ a_0\neq 0,\ a_n\neq 0,\ n\ge 1
\]
be a twisted polynomial satisfying the condition (*) in Definition~\ref{dfn:cyc}~(ii) with slopes $-\s{1}<\dots<-\s{k}$. If $y\in\Gamma_{\log,\an,\con}^{\alg}$ is a solution of the Frobenius equation
\begin{equation}\label{eq:Frob}
f(\sigma)y=a_0y+a_1y^{\sigma}+\dots+a_ny^{\sigma^n}=0,
\end{equation}
then $y$ is either bounded or exactly of log-growth $\s{j}$ for some $j$ such that $\s{j}>0$.
\end{thm}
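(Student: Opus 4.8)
The plan is to induct on $n$, reducing at each step to a Frobenius equation of strictly smaller order, using the hypothesis $(*)$ to control how the Newton polygon (equivalently the norms $\ab{a_i}{1}$) behaves under the reduction. First I would factor the twisted polynomial. By Lemma~\ref{lem:lin}, each coefficient $a_i$ satisfies $\ab{a_i}{\rho}=\rho^{\alpha_i}\ab{a_i}{1}$ for $\rho$ close to $1$, where $\alpha_i\in\mathbb{Q}$; combining this with the hypothesis $(*)$ one checks that the ``log-growth Newton polygon'' obtained by plotting $(i,-\log_q\ab{a_i}{\rho})$ has, for $\rho$ near $1$, the same set of vertices as $\NP(f(\sigma))$, merely tilted. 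This is the analogue of the input that drives Nakagawa's computation in \cite{Nak}, and it is exactly the place where the genericity condition $(*)$ (rather than Nakagawa's stronger ``$n$ breaks'' assumption) does the work: $(*)$ guarantees no coefficient is ``too small'', so there is no hidden cancellation that would spoil the slope bookkeeping after a factorization step.

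Next I would perform one step of slope factorization of $f(\sigma)$ over $\Gamma^{\alg}_{\con}[p^{-1}]$ (or over a suitable finite extension where the relevant $q^{\s{j}}\in\mathbb{Q}$): split off the factor corresponding to the smallest slope $-\s{1}$, writing $f(\sigma)=g(\sigma)(\sigma-u)$ with $u\in\Gamma^{\alg}_{\con}[p^{-1}]$, $\ab{u}{1}=q^{-\s{1}}$ up to the appropriate normalization, and $g(\sigma)$ of order $n-1$ with slopes $-\s{1}\le -\s{2}<\dots<-\s{k}$ still satisfying $(*)$ (this last point needs to be checked, and is the technical heart). Given a solution $y$ of $f(\sigma)y=0$, set $z:=(\sigma-u)y=y^\sigma-uy$, so that $g(\sigma)z=0$. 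By the inductive hypothesis applied to $g(\sigma)$, $z$ is either bounded or exactly of log-growth $\s{j}$ for some $j\ge 2$ (or $j=1$, the reduced slope) with $\s{j}>0$. It then remains to solve the order-one equation $y^\sigma=uy+z$ for $y$ and read off its log-growth from that of $z$ and from $\s{1}$: writing $y=\sum y_m$ and iterating $\sigma$, one gets $y$ as a $\sigma$-power series in $z$ with coefficients built from $u$, and the growth of each term is governed by $\ab{u}{\rho}^{(\text{number of }\sigma\text{'s})}$, which by $\ab{\sigma^m(\cdot)}{\rho}=\ab{\cdot}{\rho^{q^m}}$ and $\ab{u}{\rho}\approx q^{-\s{1}}$ translates into a $\log(1/\rho)$-power of the predicted order. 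The base case $n=1$ is $y^\sigma=u y$ with $\ab{u}{1}=q^{-\s{1}}$: by $\ab{\cdot}{\rho}$-estimates, $\ab{y}{\rho^{q}}=\ab{u}{\rho}\ab{y}{\rho}$, and iterating shows $\ab{y}{\rho}$ grows like $\Ro{-\s{1}}$ if $\s{1}>0$ and is bounded if $\s{1}\le 0$; here one must also note that $\s{1}>0$ precisely when the slope is a genuine ``slope of $f(\sigma)$'' in the sense of Definition~\ref{dfn:cyc} (slopes $\le 0$ being truncated), so the dichotomy ``bounded or exactly $\s{j}$ with $\s{j}>0$'' is the right one.

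The main obstacle I expect is twofold. First, ensuring that the quotient $g(\sigma)$ after splitting off one slope still satisfies $(*)$: a priori slope factorization over $\Gamma^{\alg}_{\con}[p^{-1}]$ controls only the Newton polygon of $g(\sigma)$, i.e.\ its vertices, not the sizes of all intermediate coefficients, so one needs an argument — presumably using the explicit ``division algorithm'' form of the factorization together with the fact that $(*)$ holds for $f(\sigma)$ and for the linear factor — to propagate $(*)$ downward. Second, the log-growth estimate for the inhomogeneous order-one equation $y^\sigma=uy+z$ must be \emph{exact}, not just an upper bound: one needs both that $y$ lies in $\Fil_\mu$ for the predicted $\mu=\max(\s{1},\text{(log-growth of }z))$ and that it is not in any smaller $\Fil$, which requires tracking a genuine lower bound on $\ab{y}{\rho}$ as $\rho\uparrow 1$ — this is where Lemma~\ref{lem:an}~(i) and the multiplicativity of $\ab{\cdot}{\rho}$ enter, and where one must be careful that the $(\log x)$-terms allowed in $\Gamma_{\log,\an,\con}^{\alg}$ contribute correctly (the integration step $y^\sigma - uy = z$ is the source of the possible extra $\log x$ factor, exactly as in Dwork's original setting and as in the Example with the $a_{-1}\log x$ term).
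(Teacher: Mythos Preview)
Your inductive factorization approach is quite different from the paper's, which never factors $f(\sigma)$. Instead the paper proves two direct estimates on $\ab{y}{\rho}$. For the upper bound (Proposition~\ref{prop:upper}), one identifies an index $j$ such that $\sup_{\bk{j-1}\le i\le \bk{j}}\ab{a_iy^{\sigma^i}}{\rho}\le \sup_{\bk{j}+1\le i\le n}\ab{a_iy^{\sigma^i}}{\rho}$ for all $\rho$ near $1$; an inductive bookkeeping with integers $\varepsilon_{iu}\in\{0,1\}$ then bounds $\log\ab{y}{\rho^{q^{-m}}}$ by a telescoping sum of terms $\log\ab{a_i/a_{\bk{j}}}{\rho^{q^u}}$, and convexity of $\NP(f(\sigma))$ gives $y\in\Fil_{\s{j+1}}$. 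For the lower bound (Proposition~\ref{prop:lower}), condition $(*)$ is first converted into a strict inequality of norms (Lemma~\ref{lem:seg}), which drives a dual construction producing a sequence $\m{l}\to-\infty$ with $\ab{y}{\rho_1^{q^{\m{l}}}}$ bounded \emph{below} by a constant times $(\log(1/\rho_1^{q^{\m{l}}}))^{-\s{j+1}}$. The correct $j$ is selected in \S\ref{subsec:concl} as the least index at which a certain condition $(C_j)$ fails.

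The obstacles you flag are genuine and, I believe, not easily overcome. First, splitting off a \emph{linear} factor $(\sigma-u)$ with $u\in\Gamma^{\alg}_{\con}[p^{-1}]$ is itself problematic: Dieudonn\'e--Manin over the completion $\Gamma^{\alg}[p^{-1}]$ yields pure pieces, and a pure piece splits into rank-one summands only when $q^{\s{j}}\in\mathbb{Q}$, which Theorem~\ref{thm:Frob} does not assume; even granting that, descending a non-canonical rank-one summand to the Henselian but non-complete ring $\Gamma^{\alg}_{\con}[p^{-1}]$ needs an argument---note that the paper already requires a density/perturbation trick (Theorem~\ref{thm:cyc}) merely to produce a generic cyclic vector there. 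Second, I see no mechanism to propagate $(*)$ to $g(\sigma)$: condition $(*)$ constrains \emph{every} coefficient, whereas slope factorization constrains only the polygon, and in the twisted setting there is no evident product formula for the intermediate norms $\ab{\cdot}{1}$. Third, your passage from the log-growth of $z$ to that of $y$ is underspecified in both directions: since $z=y^{\sigma}-uy$, cancellation can make $z$ strictly smaller than $y$, so the log-growth of $z$ gives only a lower bound on that of $y$; conversely, your $\sigma$-power-series produces one particular solution of the inhomogeneous equation, not the given $y$, and the difference is a homogeneous solution whose exact log-growth must still be determined. The paper's direct-estimate route sidesteps all three issues.
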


\begin{rem}\label{rem:PBQ}
For a $(\sigma,\nabla)$-module $M$ of rank $n$ over $\Gamma_{\con}[p^{-1}]$, we construct a Frobenius equation $f(\sigma)y=0$ satisfying the assumption of Theorem~\ref{thm:Frob} (see Construction~\ref{const:CT}). The ambiguity of the log-growth of $y$ in Theorem~\ref{thm:Frob} is owing to the fact that $M_{\mathcal{E}}/M_{\mathcal{E}}^0$ may not be pure as a $\sigma$-module. One can expect that if $M$ is PBQ, then $y$ is exactly of log-growth $s_1$, as is the case for $n=2$ (\cite[7.3]{CT}).
\end{rem}

We divide the proof into two parts: the first part is an estimation of an upper bound of the log-growth of $y$ (easier), and the second part is an estimation of a lower bound of the log-growth of $y$ (harder). The condition (*) will be used only in the second part. The integer $j$ in Theorem~\ref{thm:Frob} will be determined in \S~\ref{subsec:concl}.

\begin{notation}\label{not:rho}
In this section, we keep the notation in Theorem~\ref{thm:Frob}. Let $0=\bk{0}< \bk{1}< \dots< \bk{k}=n$ be the $x$-coordinates of the vertices of $\NP(f(\sigma))$. By Lemma~\ref{lem:lin}, there exists a real number $\rho_0$ sufficiently close to $1$ from the left such that for all $i\in \{0,1,\dots,n\}$, we have
\[
a_i\in\Gamma^{\alg}_{r},\ y\in\Gamma^{\alg}_{\log,\an,r},
\]
where $r=-\log_p{\rho_0^{q^n}}$ and
\[
\ab{a_i}{\rho}=\rho^{\alpha(i)}\ab{a_i}{1}\ \forall\rho\in [\rho_0^{q^n},1)
\]
for some $\alpha(i)\in\mathbb{Q}$; we fix such a $\rho_0$.
\end{notation}

\subsection{Estimation of upper bound}

\begin{prop}[{A refinement of \cite[6.12]{CT}}]\label{prop:upper}
Let $j\in \{0,\dots,k-1\}$. We assume
\begin{equation}\label{eq:ass}
\sup_{\bk{j-1}\le i\le \bk{j}}\ab{a_iy^{\sigma^i}}{\rho}\le \sup_{\bk{j}+1\le i\le n}\ab{a_iy^{\sigma^i}}{\rho}\ \forall\rho\in [\rho_0,1);
\end{equation}
when $j=0$, we set $\sup_{\bk{j-1}\le i\le \bk{j}}\ab{a_iy^{\sigma^i}}{\rho}=\ab{a_0y}{\rho}$. Then, we have
\begin{enumerate}
\item For any $\rho\in [\rho_0,1)$ and $m\in\mathbb{N}$, there exist an integer $N\in\{0,\dots,n-1\}$, which depends only on $m$, and a sequence $\varepsilon_{iu}$ of integers, which depends on $\rho$ and $m$, defined for
\[
I_{m}:=\{(i,u)\in \mathbb{Z}^2;\bk{j}+1\le i\le n,\ -m-\bk{j}\le u\le 0\}
\]
satisfying the following conditions:
\begin{enumerate}
\item[(a)]
\[
\log{\ab{y}{\rho^{q^{-m}}}}-\log{\ab{y}{\rho^{q^{-N}}}}\le \sum_{(i,u)\in I_{m}}\varepsilon_{iu}\log{\ab{a_i/a_{\bk{j}}}{\rho^{q^u}}};
\]
\item[(b)] $\varepsilon_{iu}\in\{0,1\}$ and
\[
\sum_{(i,u)\in I_{m}}(i-\bk{j})\varepsilon_{iu}=m-N.
\]
\end{enumerate}
\item $y$ has log-growth $\s{j+1}$.
\end{enumerate}
\end{prop}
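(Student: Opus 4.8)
The plan is to prove (i) and (ii) together by a careful bookkeeping argument on the partial norms, following the strategy of \cite[6.12]{CT} but tracking the dependence on $\rho$ more precisely. The starting point is the Frobenius equation \eqref{eq:Frob}, which we rewrite as
\[
a_{\bk{j}}y^{\sigma^{\bk{j}}}=-\sum_{i\neq \bk{j}}a_iy^{\sigma^i}.
\]
Applying $\ab{\cdot}{\rho}$ and using the ultrametric inequality together with hypothesis \eqref{eq:ass} (which guarantees that, after renormalizing by $a_{\bk{j}}$, the ``low'' terms $i\le \bk{j}$ are dominated by the ``high'' terms $i>\bk{j}$), one obtains an inequality expressing $\ab{y^{\sigma^{\bk{j}}}}{\rho}$, hence $\ab{y}{\rho^{q^{\bk{j}}}}$, in terms of $\ab{y^{\sigma^i}}{\rho}=\ab{y}{\rho^{q^i}}$ for $i$ in the range $\bk{j}+1\le i\le n$, with coefficients that are products of the ratios $\ab{a_i/a_{\bk{j}}}{\rho}$. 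The key point is that $\rho^{q^i}$ with $i>\bk{j}$ is \emph{closer to $1$} than $\rho^{q^{\bk{j}}}$, so one is expressing the value of $y$ at a point nearer the boundary in terms of its values at points slightly further from the boundary; iterating this relation lets one march from $\rho^{q^{-m}}$ back towards $\rho^{q^{-N}}$ for some bounded $N$, which is exactly the shape of the inequality in (a).

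The core of part (i) is therefore an induction on $m$ producing the multi-index $\{\varepsilon_{iu}\}$ on the index set $I_m$. I would set up the induction so that each step peels off one application of the displayed relation: if the current ``frontier'' value is $\ab{y}{\rho^{q^{u}}}$ with $u$ still $\le -N$, rewrite it via the relation at level $u$, producing new frontier values $\ab{y}{\rho^{q^{u+(i-\bk{j})}}}$ for $\bk{j}+1\le i\le n$ and incrementing the corresponding $\varepsilon_{iu}$ (staying in $\{0,1\}$ because each such value is visited at most once), while accumulating the logarithmic contribution $\varepsilon_{iu}\log\ab{a_i/a_{\bk{j}}}{\rho^{q^u}}$. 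The bookkeeping identity $\sum (i-\bk{j})\varepsilon_{iu}=m-N$ in (b) is just conservation of the ``total displacement'' of the frontier, and $N\in\{0,\dots,n-1\}$ comes from the fact that each elementary step moves the frontier by at least $1$ and at most $n-\bk{j}\le n$, so only the residue of $m$ modulo these jump sizes matters; that $N$ depends only on $m$ (not on $\rho$) is because the combinatorial structure of which relations get applied is fixed once $m$ is, only the numerical coefficients $\varepsilon_{iu}$ and the values $\ab{a_i/a_{\bk{j}}}{\rho^{q^u}}$ depend on $\rho$.

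For part (ii), I would feed the estimate of (a) into the asymptotic analysis as $\rho\uparrow 1$. Using Lemma~\ref{lem:lin} (or rather Notation~\ref{not:rho}), the ratio $\ab{a_i/a_{\bk{j}}}{\rho^{q^u}}$ equals $\rho^{q^u(\alpha(i)-\alpha(\bk{j}))}\ab{a_i/a_{\bk{j}}}{1}$ on the relevant interval, and since $i>\bk{j}$ lies strictly above or on the segment of $\NP(f(\sigma))$ through $\bk{j}$, the quantity $-\log_q\ab{a_i/a_{\bk{j}}}{1}$ is at least the slope times $(i-\bk{j})$, i.e. $\ge -\s{j+1}(i-\bk{j})$. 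Summing over $I_m$ with the constraint $\sum(i-\bk{j})\varepsilon_{iu}=m-N$ and letting $\rho\uparrow 1$ (so $\rho^{q^u}\to 1$ and the $\rho$-powers contribute nothing to the growth order), one gets
\[
\log\ab{y}{\rho^{q^{-m}}}\le \log\ab{y}{\rho^{q^{-N}}}+(m-N)\s{j+1}\log q + O(1),
\]
and reindexing $\rho^{q^{-m}}\mapsto\rho$ turns this into $\ab{y}{\rho}=\RO{-\s{j+1}}$, i.e. $y\in\Fil_{\s{j+1}}\Gamma^{\alg}_{\log,\an,\con}$, which by Lemma~\ref{lem:logan}~(i) is the assertion that $y$ has log-growth $\s{j+1}$. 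The main obstacle I anticipate is the careful handling of part (a)'s induction: making sure the frontier never revisits an index $u$ so that the $\varepsilon_{iu}$ genuinely stay in $\{0,1\}$, and that the choice of $N$ can be made to depend only on $m$; one must be slightly careful because several $i$'s can jump the frontier by the same amount, but choosing a canonical (e.g.\ greedy, largest-jump-first) rewriting rule makes the dependence deterministic in $m$ alone. The passage to the limit in (ii) is then essentially routine given the $\NP$ inequality supplied by hypothesis \eqref{eq:ass} together with the convexity of the Newton polygon.
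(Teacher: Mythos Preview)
Your proposal is essentially correct and follows the same strategy as the paper: induction on $m$ for~(i), then feeding the inequality of~(a) together with the Newton polygon convexity \eqref{eq:2.2} into the asymptotic analysis for~(ii). A few points deserve sharpening.

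First, the Frobenius equation~\eqref{eq:Frob} is not needed for part~(i); hypothesis~\eqref{eq:ass} alone already gives $\ab{a_{\bk{j}}y^{\sigma^{\bk{j}}}}{\rho}\le\sup_{\bk{j}+1\le i\le n}\ab{a_iy^{\sigma^i}}{\rho}$, since $\bk{j}$ lies in the range $[\bk{j-1},\bk{j}]$. Your reading of~\eqref{eq:ass} as bounding \emph{all} low terms $i\le\bk{j}$ is not what it says (only $\bk{j-1}\le i\le\bk{j}$), but fortunately you only need the single term $i=\bk{j}$.

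Second, the ``frontier'' picture with multiple new values is not how the induction runs. The paper, at each step, picks \emph{one} index $i'$ realizing the supremum on the right of~\eqref{eq:1}, obtains the single inequality~\eqref{eq:3}, and reduces to the induction hypothesis at the strictly smaller integer $m+\bk{j}-i'$. This single-choice descent is what keeps each $u$-level visited at most once and forces $\varepsilon_{iu}\in\{0,1\}$.

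Third, your argument that $N$ depends only on $m$ via a greedy rule does not work: a $\rho$-independent choice of $i'$ will in general not achieve the supremum, so inequality~\eqref{eq:3} would fail. In fact the $i'$'s, and hence $N$, do depend on $\rho$ in the paper's induction as well; what matters for~(ii) is only that $N\in\{0,\dots,n-1\}$, so that $\rho^{q^{m-N}}$ stays in a fixed compact interval once one normalizes $\rho^{q^m}\in[\rho_0,\rho_0^{q^{-1}})$. Your claim that ``the $\rho$-powers contribute nothing to the growth order'' is then justified by an explicit constant bound as in~\eqref{eq:2.5}--\eqref{eq:2.6}, not merely by letting $\rho\uparrow 1$.
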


\begin{proof}
\begin{enumerate}
\item We fix $\rho$ and proceed by induction on $m$. When $m\le n-1$, we set $N=m$ and $\varepsilon_{iu}\equiv 0$ for all $(i,u)\in I_m$. Then, we have nothing to prove. Assume that the assertion is true for the integers less than or equal to $m-1$ with $m\ge n$. By (\ref{eq:ass}) for $\rho=\rho^{q^{-m-\bk{j}}}$, we have
\begin{equation}\label{eq:1}
\ab{a_{\bk{j}}y^{\sigma^{\bk{j}}}}{\rho^{q^{-m-\bk{j}}}}\le \sup_{\bk{j}+1\le i\le n}\ab{a_iy^{\sigma^i}}{\rho^{q^{-m-\bk{j}}}}.
\end{equation}
We choose $i'\in\{\bk{j}+1,\dots,n\}$ such that
\begin{equation}\label{eq:2}
\sup_{\bk{j}+1\le i\le n}\ab{a_iy^{\sigma^i}}{\rho^{q^{-m-\bk{j}}}}=\ab{a_{i'}y^{\sigma^{i'}}}{\rho^{q^{-m-\bk{j}}}}.
\end{equation}
Recall $\ab{\sigma(\cdot)}{\eta}=\ab{\cdot}{\eta^q}$ for all $\eta\in (0,1)$ (\S~\ref{subsec:oc}). Then, by (\ref{eq:1}) and (\ref{eq:2}),
\begin{equation}\label{eq:3}
\log{\ab{y}{\rho^{q^{-m}}}}-\log{\ab{y}{\rho^{q^{-m-\bk{j}+i'}}}}\le \log{\ab{a_{i'}/a_{\bk{j}}}{\rho^{q^{-m-\bk{j}}}}}.
\end{equation}
By the induction hypothesis for $m+\bk{j}-i'$, there exist an integer $N\in\{0,\dots,n-1\}$ and a sequence $\varepsilon'_{iu}$ of $0$ or $1$ defined for $I_{m+\bk{j}-i'}$ such that
\begin{equation}\label{eq:4}
\log{\ab{y}{\rho^{q^{-m-\bk{j}+i'}}}}-\log{\ab{y}{\rho^{q^{-N}}}}\le \sum_{(i,u)\in I_{m+\bk{j}-i'}}\varepsilon'_{iu}\log{\ab{a_i/a_{\bk{j}}}{\rho^{q^{u}}}},
\end{equation}
\begin{equation}\label{eq:5}
\sum_{(i,u)\in I_{m+\bk{j}-i'}}(i-\bk{j})\varepsilon'_{iu}=m+\bk{j}-i'-N.
\end{equation}
For $(i,u)\in I_{m}$, we define
\[
\varepsilon_{iu}:=
\begin{cases}
\varepsilon'_{iu}&\text{if }(i,u)\in I_{m+\bk{j}-i'}\\
1&\text{if }(i,u)=(i',-m-\bk{j})\\
0&\text{otherwise}.
\end{cases}
\]
Then, by adding (\ref{eq:3}) to (\ref{eq:4}), the inequality in (a) follows. The condition $\varepsilon_{iu}\in \{0,1\}$ follows by construction, and the equality in (b) follows from (\ref{eq:5}).
\item We fix $\rho\in [\rho_0,1)$ for a while. Let $m$ be a natural number such that $\rho^{q^m}\in [\rho_0,\rho_0^{q^{-1}})$. By applying (i) to $(\rho,m)=(\rho^{q^m},m)$, there exist an integer $\NN{m}\in\{0,\dots,n-1\}$ and a sequence $\varepsilon_{iu}^{(m)}$ of $0$ or $1$ defined for $(i,u)\in I_{m}$ such that
\begin{equation}\label{eq:2.3}
\log{\ab{y}{\rho}}-\log{\ab{y}{\rho^{q^{m-\NN{m}}}}}\le \sum_{(i,u)\in I_{m}}\varepsilon^{(m)}_{iu}\log{\ab{a_i/a_{\bk{j}}}{\rho^{q^{u+m}}}}
\end{equation}
\begin{equation}\label{eq:2.4}
\sum_{(i,u)\in I_{m}}(i-\bk{j})\varepsilon^{(m)}_{iu}=m-\NN{m}.
\end{equation}
For $\bk{j}+1\le i\le n$, there exists $\x{i}\in\mathbb{Q}$ such that
\begin{equation}\label{eq:2.1}
\ab{a_i/a_{\bk{j}}}{\eta}=\eta^{\x{i}}\ab{a_i/a_{\bk{j}}}{1}\ \forall\eta\in [\rho_0,1)
\end{equation}
by Notation~\ref{not:rho}. Moreover,
\begin{equation}\label{eq:2.2}
-\frac{1}{i-\bk{j}}\log_q{\ab{a_i/a_{\bk{j}}}{1}}\ge -\s{j+1}
\end{equation}
by the convexity of the Newton polygon of $f(\sigma)$. By (\ref{eq:2.1}) and (\ref{eq:2.2}),
\begin{equation}\label{eq:2.5}
\text{RHS of }(\ref{eq:2.3})\le \sum_{(i,u)\in I_{m}}\varepsilon^{(m)}_{iu} q^{u+m} \x{i}\log{\rho}+\sum_{(i,u)\in I_{m}}(i-\bk{j})\varepsilon^{(m)}_{iu}\s{j+1}\log{q}.
\end{equation}

Let $\xx:=\max\{\pm \x{i};\bk{j}+1\le i\le n\}$. Then, the first sum in RHS of (\ref{eq:2.5}) is bounded above by
\begin{align*}
&\sum_{(i,u)\in I_{m}}\varepsilon^{(m)}_{iu}q^{u+m}\xx \log{(1/\rho)}\le\sum_{(i,u)\in I_{m}}q^{u+m} \xx \log{(1/\rho)}=(n-\bk{j})\frac{q^m-q^{-\bk{j}-1}}{1-q^{-1}}\xx \log{(1/\rho)}\\
\le& (n-\bk{j})\frac{q^{m+1}}{q-1}\xx \log{(1/\rho)}=(n-\bk{j})\frac{q}{q-1}\xx \log{(1/\rho^{q^m})} \le n \frac{q}{q-1}\xx\log{(1/\rho_0)}.
\end{align*}

By (\ref{eq:2.4}), the second sum in RHS of (\ref{eq:2.5}) is equal to
\[
(m-\NN{m})\s{j+1}\log{q}.
\]
Thus, (\ref{eq:2.3}) leads to
\begin{align}\label{eq:2.6}
\ab{y}{\rho}&\le C \ab{y}{\rho^{q^{m-\NN{m}}}}\cdot q^{(m-\NN{m}) \s{j+1}}\notag\\&=C \ab{y}{\rho^{q^{m-\NN{m}}}}\cdot (\log{(1/\rho^{q^{m-\NN{m}}})})^{\s{j+1}}\cdot\Ro{-\s{j+1}},
\end{align}
where $C:=\exp\{n q(q-1)^{-1} \xx\log{(1/\rho_0)}\}$ is a constant independent of $\rho$. Since $\rho^{q^{m-\NN{m}}}\in [\rho_0,\rho_0^{q^{-n}})$, the functions $\ab{y}{\rho^{q^{m-\NN{m}}}}$ and $(\log{(1/\rho^{q^{m-\NN{m}}})})^{\s{j+1}}$ are bounded when $\rho$ runs over $[\rho_0,1)$: note that the function $[\rho_0,1)\to\mathbb{R};\rho\mapsto \ab{y}{\rho}$ is continuous. Thus, (\ref{eq:2.6}) implies the desired estimation
\[
\ab{y}{\rho}=\RO{-\s{j+1}}\text{ as }\rho\uparrow 1.
\]
\end{enumerate}
\end{proof}

\subsection{Estimation of lower bound}

We start with converting the condition (*) into the lemma:

\begin{lem}\label{lem:seg}
For any $j\in\{0,\dots,k-1\}$, $i\in \{\bk{j+1}+1,\dots,n\}$, and $i'\in\{0,\dots,\bk{j+1}-1\}$, we have
\[
\log{\ab{a_{i'-\bk{j+1}+i}}{1}}-\log{\ab{a_{i'}}{1}}>\log{\ab{a_i}{1}}-\log{\ab{a_{\bk{j+1}}}{1}}.
\]
\end{lem}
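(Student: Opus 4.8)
The idea is to read the claimed inequality purely as a statement about the Newton polygon $\NP(f(\sigma))$, using the fact that condition (*) forces every plotted point to lie on the polygon. Write $N_i := -\log_q \ab{a_i}{1}$ for $0 \le i \le n$, so that the points $(i, N_i)$ all lie on $\NP(f(\sigma))$ by (*). The inequality to prove, after dividing by $-\log q$ (which reverses it) and regrouping, reads
\[
N_{i'-\bk{j+1}+i} - N_{i'} < N_i - N_{\bk{j+1}}.
\]
Now $\bk{j}$ and $\bk{j+1}$ are consecutive vertices of $\NP(f(\sigma))$, and $-\s{j+1}$ is the slope of the segment joining them; by convexity $-\s{j+1}$ is strictly larger than the slope of any segment to its left and strictly smaller than the slope of any segment to its right. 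The plan is to express both sides as sums of slopes of unit-length sub-segments of the polygon: $N_i - N_{\bk{j+1}} = \sum_{\ell=\bk{j+1}}^{i-1}(N_{\ell+1}-N_\ell)$ runs over segments all of slope strictly greater than $-\s{j+1}$ (since $i > \bk{j+1}$ puts us strictly to the right of the vertex $\bk{j+1}$), while $N_{i'-\bk{j+1}+i} - N_{i'} = \sum_{\ell=i'}^{i'-\bk{j+1}+i-1}(N_{\ell+1}-N_\ell)$ is a sum of $i - \bk{j+1}$ consecutive unit-slope increments starting at abscissa $i' \le \bk{j+1}-1 < \bk{j+1}$.

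The key point is then a counting/convexity comparison: the left-hand sum has the same number of terms ($i-\bk{j+1}$) as the right-hand one, but its terms are unit increments of $\NP$ over the abscissa range $[i', i'-\bk{j+1}+i]$, which begins strictly to the left of $\bk{j+1}$; since the slopes of $\NP$ are non-decreasing, each of these increments is $\le$ the slope of $\NP$ just to the right of where it sits, and — crucially — because the range starts at abscissa $\le \bk{j+1}-1$, at least one of these unit increments lies on a segment of slope $\le -\s{j+1}$ (namely any increment with left endpoint $< \bk{j+1}$ has slope $\le -\s{j+1}$, with the increment ending exactly at $\bk{j+1}$ having slope $< -\s{j+1}$ whenever $\bk{j} < \bk{j+1}-1$, or in general strictly less because $\bk{j+1}$ is a genuine vertex). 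Meanwhile every one of the $i - \bk{j+1}$ increments on the right-hand side has slope $> -\s{j+1}$. Matching the two sums term-by-term against the common comparison value $-\s{j+1}$ gives that the left sum is $\le (i-\bk{j+1})(-\s{j+1})$ while the right sum is $> (i-\bk{j+1})(-\s{j+1})$ — wait, I must be careful about the direction: I want the left sum strictly smaller than the right sum, and indeed the left-hand increments average $\le -\s{j+1}$ (strictly, because of the vertex at $\bk{j+1}$), whereas the right-hand increments each exceed $-\s{j+1}$, so the strict inequality follows.

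I expect the main obstacle to be bookkeeping the edge cases where $i' = \bk{j+1}-1$ (so the left-hand abscissa range is as far right as allowed) and where $\bk{j+1} = \bk{j}+1$ (so there is only one unit increment of slope exactly $-\s{j+1}$): one must check that strictness is never lost, which is exactly where condition (*) is essential — without (*) a plotted point could lie strictly above the polygon and the increment $N_{\ell+1}-N_\ell$ would no longer be a polygon slope. Concretely I would phrase the comparison as: for $i' \le \ell \le i' - \bk{j+1} + i - 1$ one has $N_{\ell+1} - N_\ell \le$ (slope of $\NP$ on $[\ell,\ell+1]$) $\le -\s{j+1}$ when $\ell+1 \le \bk{j+1}$ and $< $ (something $\le -\s{j+1}$ is not available once $\ell \ge \bk{j+1}$) — so instead I pair up the $i-\bk{j+1}$ left increments with the $i-\bk{j+1}$ right increments via the shift by $\bk{j+1} - i'$, note each right increment sits strictly to the right of its partner, invoke monotonicity of slopes of a convex polygon for the non-strict comparison of partners, and use that the pair straddling the vertex $\bk{j+1}$ gives a strict gain. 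Summing the strict-plus-non-strict comparisons yields the claim.
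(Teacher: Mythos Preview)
Your final approach in the last paragraph --- decomposing both differences into sums of $i-\bk{j+1}$ unit increments $N_{\ell+1}-N_\ell$ (which by condition~(*) are genuine polygon slopes), pairing each left increment with the right increment shifted by $\bk{j+1}-i'>0$, invoking monotonicity of polygon slopes for the non-strict term-by-term comparison, and extracting strictness from the leftmost pair $[i',i'+1]$ versus $[\bk{j+1},\bk{j+1}+1]$ --- is correct and clean. The leftmost left interval has slope $\le -\s{j+1}$ (since $i'+1\le\bk{j+1}$) while its partner has slope $-\s{j+2}>-\s{j+1}$, so the strict gain is always there; your worries about edge cases are unfounded once you phrase it this way.

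Your intermediate attempt --- bounding the left sum above and the right sum below by $(i-\bk{j+1})(-\s{j+1})$ --- does \emph{not} work when $i'-\bk{j+1}+i>\bk{j+1}$, since then some left increments lie to the right of $\bk{j+1}$ and have slope $>-\s{j+1}$; you correctly abandon this, but it would be cleaner to go straight to the pairing argument.

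The paper's proof takes a different, more geometric route: it compares the chord slopes $a=\mathrm{slope}(P_{i'}P_{i'-\bk{j+1}+i})$ and $b=\mathrm{slope}(P_{\bk{j+1}}P_i)$ directly, splitting into two cases. If $i'-\bk{j+1}+i\le\bk{j+1}$, then $a\le-\s{j+1}<b$ by convexity. If $i'-\bk{j+1}+i>\bk{j+1}$, the two chords overlap in $x$-range; since $P_{\bk{j+1}}$ is a genuine vertex lying strictly below the chord $L_1$, while $L_2$ (a chord of the convex polygon) lies on or above $P_{i'-\bk{j+1}+i}$ at that abscissa, the chords must cross with $b>a$. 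Your pairing argument handles both cases uniformly and avoids the geometric intersection reasoning, at the cost of being slightly more combinatorial; the paper's version is shorter once the picture is clear.
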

\begin{proof}
For $0\le i\le n$, we denote by $P_i$ the point $(i,-\log_q{\ab{a_i}{1}})$. We also denote by $L_1$ and $L_2$ the segments $P_{i'}P_{i'-\bk{j+1}+i}$ and $P_{\bk{j+1}}P_i$, respectively. Let $a$ and $b$ be the slopes of $L_1$ and $L_2$, respectively. We have only to prove $a<b$. Let us consider separately the cases where $i'-\bk{j+1}+i\le \bk{j+1}$ or $i'-\bk{j+1}+i>\bk{j+1}$. In the first case, we have $a\le-\s{j+1}$ by the condition~(*). By the convexity of the Newton polygon of $f(\sigma)$, we have $-\s{j+1}< b$. Hence, $a<b$. In the latter case, the segment $L_1$ intersects with $L_2$. Since $P_{\bk{j+1}}$ is under $L_1$, we have $a<b$.
\end{proof}

\begin{notation}\label{not:rho2}
By Lemmas~\ref{lem:lin} and \ref{lem:seg}, after choosing $\rho_0$ sufficiently large if necessary, we may assume the following condition: for any $j\in\{0,\dots,k-1\}$, $i\in \{\bk{j+1}+1,\dots,n\}$, and $i'\in\{0,\dots,\bk{j+1}-1\}$, we have
\begin{equation}\label{eq:key}
\log{\ab{a_{i'-\bk{j+1}+i}}{\rho_2}}-\log{\ab{a_{i'}}{\rho_2}}>\log{\ab{a_i}{\rho_3}}-\log{\ab{a_{\bk{j+1}}}{\rho_3}}\ \forall\rho_2,\rho_3\in [\rho_0,1);
\end{equation}
indeed, both sides of the inequality are continuous with respect to $\rho_2$ and $\rho_3$, respectively, and converge to $\log{\ab{a_{i'-\bk{j+1}+i}}{1}}-\log{\ab{a_{i'}}{1}}$ and $\log{\ab{a_i}{1}}-\log{\ab{a_{\bk{j+1}}}{1}}$ as $\rho_2,\rho_3\uparrow 1$, respectively.
\end{notation}

To estimate the function $\ab{y}{\rho}$ of $\rho$ from below, we need to combine several inequalities which are similar to inequality (\ref{eq:ass}).

\begin{assumption}\label{ass:ineq}
Let $j\in \{0,\dots,k-1\}$. In the rest of this subsection, we assume the following:
\[
\sup_{\bk{0}\le i\le \bk{1}}{\ab{a_iy^{\sigma^i}}{\rho}}\le \sup_{\bk{1}+1\le i\le n}{\ab{a_iy^{\sigma^i}}{\rho}}\ \forall\rho\in [\rho_0^{q^{-n}},1),
\]
\[
\sup_{\bk{1}\le i\le \bk{2}}{\ab{a_iy^{\sigma^i}}{\rho}}\le \sup_{\bk{2}+1\le i\le n}{\ab{a_iy^{\sigma^i}}{\rho}}\ \forall\rho\in [\rho_0^{q^{-n}},1),
\]
\[
\vdots
\]
\[
\sup_{\bk{j-1}\le i\le \bk{j}}{\ab{a_iy^{\sigma^i}}{\rho}}\le \sup_{\bk{j}+1\le i\le n}{\ab{a_iy^{\sigma^i}}{\rho}}\ \forall\rho\in [\rho_0^{q^{-n}},1);
\]
when $j=0$, we set $\sup_{\bk{j-1}\le i\le \bk{j}}{\ab{a_iy^{\sigma^i}}{\rho}}:=\ab{a_0y}{\rho}$. We also assume
\[
\sup_{\bk{j}\le i\le \bk{j+1}}{\ab{a_iy^{\sigma^i}}{\rho}}>\sup_{\bk{j+1}+1\le i\le n}{\ab{a_iy^{\sigma^i}}{\rho}}\ \exists \rho\in [\rho_0^{q^{-n}},1);
\]
when $j=k-1$, we set $\sup_{\bk{j+1}+1\le i\le n}{\ab{a_iy^{\sigma^i}}{\rho}}:=0$.
\end{assumption}

\begin{lem}\label{lem:lower}
Assume that $\rho_1\in [\rho_0^{q^{-n}},1)$ satisfies
\[
\sup_{\bk{j}\le i\le \bk{j+1}}{\ab{a_iy^{\sigma^i}}{\rho_1}}>\sup_{\bk{j+1}+1\le i\le n}{\ab{a_iy^{\sigma^i}}{\rho_1}}.
\]
\begin{enumerate}
\item We have
\[
\sup_{0\le i\le \bk{j+1}-1}\ab{a_iy^{\sigma^i}}{\rho_1}\ge \ab{a_{\bk{j+1}}y^{\sigma^{\bk{j+1}}}}{\rho_1}.
\]
\item Let $i'\in\{0,\dots,\bk{j+1}-1\}$ be an integer such that
\[
\ab{a_{i'}y^{\sigma^{i'}}}{\rho_1}=\sup_{0\le i\le \bk{j+1}-1}{\ab{a_iy^{\sigma^i}}{\rho_1}}.
\]
Then, we have
\[
\sup_{\bk{j}\le i\le \bk{j+1}}{\ab{a_iy^{\sigma^{i}}}{\rho_1^{q^{i'-\bk{j+1}}}}}>\sup_{\bk{j+1}+1\le i\le n}{\ab{a_iy^{\sigma^i}}{\rho_1^{q^{i'-\bk{j+1}}}}}.
\]
\end{enumerate}
\end{lem}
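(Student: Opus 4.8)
The plan is to deduce both assertions from the ultrametric inequality applied to the Frobenius equation~\eqref{eq:Frob}, together with the coefficient comparison~\eqref{eq:key}. I shall use throughout that $\ab{a_iy^{\sigma^i}}{\rho}=\ab{a_i}{\rho}\,\ab{y}{\rho^{q^i}}$ (since $\ab{\sigma(\cdot)}{\eta}=\ab{\cdot}{\eta^q}$), and that condition~$(*)$ forces $a_i\neq 0$ for every $0\le i\le n$ — the plotted point of a vanishing $a_i$ would lie ``at infinity'' and hence could not belong to the finite polygon $\NP(f(\sigma))$. In particular, once $y\neq 0$ (which the hypothesis guarantees, as it makes the left-hand supremum positive), every term $a_iy^{\sigma^i}$ has strictly positive norm at every admissible radius. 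For part~(i): the relation $\sum_{i=0}^n a_iy^{\sigma^i}=0$ holds in the overconvergent ring $\Gamma^{\alg}_{\log,\an,r}$, on which $\ab{\cdot}{\rho_1}$ is a multiplicative non-archimedean norm, so the maximum $M:=\max_{0\le i\le n}\ab{a_iy^{\sigma^i}}{\rho_1}$, which is positive, must be attained at two distinct indices; the assumed strict inequality shows $M$ is not attained on $\{\bk{j+1}+1,\dots,n\}$, so it is attained at two indices of $\{0,\dots,\bk{j+1}\}$, at least one of which is $\le\bk{j+1}-1$. Thus $\sup_{0\le i\le\bk{j+1}-1}\ab{a_iy^{\sigma^i}}{\rho_1}=M\ge\ab{a_{\bk{j+1}}y^{\sigma^{\bk{j+1}}}}{\rho_1}$, which is part~(i).

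For part~(ii) I put $\rho':=\rho_1^{q^{i'-\bk{j+1}}}$. Since $i'-\bk{j+1}\in\{-\bk{j+1},\dots,-1\}$ and $\rho_1\in[\rho_0^{q^{-n}},1)$, a short computation gives $\rho'\in(\rho_0,1)$ — so that~\eqref{eq:key} is applicable with $\rho_2,\rho_3$ taken from $\{\rho_1,\rho'\}$ — and every radius ${\rho'}^{q^l}$ and $\rho_1^{q^l}$ occurring below stays in the admissible interval $[\rho_0^{q^n},1)$. It suffices to prove $\ab{a_iy^{\sigma^i}}{\rho'}<\ab{a_{\bk{j+1}}y^{\sigma^{\bk{j+1}}}}{\rho'}$ for every $i\in\{\bk{j+1}+1,\dots,n\}$, because $\bk{j}\le\bk{j+1}$ means the term $a_{\bk{j+1}}y^{\sigma^{\bk{j+1}}}$ already contributes to the left-hand supremum in the conclusion. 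Fix such an $i$ and set $i'':=i'-\bk{j+1}+i\in\{1,\dots,n-1\}$. Using ${\rho'}^{q^i}=\rho_1^{q^{i''}}$ and ${\rho'}^{q^{\bk{j+1}}}=\rho_1^{q^{i'}}$, the quotient $\ab{a_iy^{\sigma^i}}{\rho'}/\ab{a_{\bk{j+1}}y^{\sigma^{\bk{j+1}}}}{\rho'}$ equals $(\ab{a_i}{\rho'}/\ab{a_{\bk{j+1}}}{\rho'})\cdot(\ab{y}{\rho_1^{q^{i''}}}/\ab{y}{\rho_1^{q^{i'}}})$; applying~\eqref{eq:key} with $\rho_2=\rho_1$ and $\rho_3=\rho'$ bounds the first factor strictly above by $\ab{a_{i''}}{\rho_1}/\ab{a_{i'}}{\rho_1}$, so the quotient is strictly smaller than $\ab{a_{i''}y^{\sigma^{i''}}}{\rho_1}/\ab{a_{i'}y^{\sigma^{i'}}}{\rho_1}$, and it remains to see that the latter is $\le 1$.

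The inequality $\ab{a_{i''}y^{\sigma^{i''}}}{\rho_1}\le\ab{a_{i'}y^{\sigma^{i'}}}{\rho_1}$ is the technical core; I would prove it by splitting on the position of $i''$ relative to $\bk{j+1}$. If $i''\le\bk{j+1}-1$, it is immediate from the maximality defining $i'$. If $i''=\bk{j+1}$, it is part~(i) together with that maximality (which give $\ab{a_{i'}y^{\sigma^{i'}}}{\rho_1}\ge\ab{a_{\bk{j+1}}y^{\sigma^{\bk{j+1}}}}{\rho_1}$). If $i''\ge\bk{j+1}+1$ (and then $i''\le n-1$), then $\ab{a_{i''}y^{\sigma^{i''}}}{\rho_1}\le\sup_{\bk{j+1}+1\le l\le n}\ab{a_ly^{\sigma^l}}{\rho_1}<\sup_{\bk{j}\le l\le\bk{j+1}}\ab{a_ly^{\sigma^l}}{\rho_1}$ by the hypothesis of the lemma, and the last supremum is $\le\ab{a_{i'}y^{\sigma^{i'}}}{\rho_1}$ because its terms with $l\le\bk{j+1}-1$ are dominated by maximality of $i'$ and its term with $l=\bk{j+1}$ by part~(i). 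I expect the main obstacle to be purely the bookkeeping — keeping the index translation $i\mapsto i''=i'-\bk{j+1}+i$ synchronized with the change of radius $\rho_1\leftrightarrow\rho'=\rho_1^{q^{i'-\bk{j+1}}}$, and verifying at each substitution that the radii stay in $[\rho_0^{q^n},1)$, where the norms, their $\sigma$-equivariance, and~\eqref{eq:key} are valid. Beyond this, the only inputs are the ultrametric inequality, part~(i), and~\eqref{eq:key}.
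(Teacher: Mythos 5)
Your proposal is correct and follows essentially the same route as the paper's proof. Part~(i) is the same ultrametric argument: the paper phrases it as a proof by contradiction (assume $\ab{a_{\bk{j+1}}y^{\sigma^{\bk{j+1}}}}{\rho_1}$ strictly dominates all other terms and read off a contradiction from $\sum_i a_iy^{\sigma^i}=0$), whereas you phrase it as "the positive maximum must be attained at two indices" — these are interchangeable. For part~(ii), the paper first records, from part~(i) together with the lemma's hypothesis, the single identity $\ab{a_{i'}y^{\sigma^{i'}}}{\rho_1}=\sup_{0\le i\le n}\ab{a_iy^{\sigma^i}}{\rho_1}$ and then runs exactly the log computation you give, using~\eqref{eq:key}. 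Your three-way case split on the location of $i''$ relative to $\bk{j+1}$ reconstitutes this same maximality statement, so the two presentations are equivalent; the paper's is just slightly more compact. Your preliminary observation that condition~$(*)$ forces $a_i\neq0$ for all $i$ is a small point the paper leaves implicit, and it is indeed needed (for $\eqref{eq:key}$ and for taking logs), so it is good that you made it explicit.
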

\begin{proof}
\begin{enumerate}
\item Suppose the contrary. Then, we have $\ab{a_{\bk{j+1}}y^{\sigma^{\bk{j+1}}}}{\rho_1}>\sup_{i\neq \bk{j+1}}{\ab{a_iy^{\sigma^i}}{\rho_1}}\ge 0$ by the assumption of the lemma. By (\ref{eq:Frob}), we have $a_{\bk{j+1}}y^{\sigma^{\bk{j+1}}}=-\sum_{i\neq\bk{j+1}}a_iy^{\sigma^i}$. By taking $\ab{\cdot}{\rho_1}$, we have $\ab{a_{\bk{j+1}}y^{\sigma^{\bk{j+1}}}}{\rho_1}\le\sup_{i\neq\bk{j+1}}\ab{a_iy^{\sigma^i}}{\rho_1}$, which is a contradiction.
\item By (i) and the assumption of the lemma, we have
\begin{equation}\label{eq:3.1}
\ab{a_{i'}y^{\sigma^{i'}}}{\rho_1}=\sup_{0\le i\le n}\ab{a_{i}y^{\sigma^{i}}}{\rho_1}.
\end{equation}
For $i\in\{\bk{j+1}+1,\dots,n\}$, we have
\begin{align*}
&\log{\ab{y^{\sigma^{\bk{j+1}}}}{\rho_1^{q^{i'-\bk{j+1}}}}}-\log{\ab{y^{\sigma^i}}{\rho_1^{q^{i'-\bk{j+1}}}}}=\log{\ab{y}{\rho_1^{q^{i'}}}}-\log{\ab{y}{\rho_1^{q^{i'-\bk{j+1}+i}}}}\\
=&\log{\ab{a_{i'}y^{\sigma^{i'}}}{\rho_1}}-\log{\ab{a_{i'-\bk{j+1}+i}y^{\sigma^{i'-\bk{j+1}+i}}}{\rho_1}}-\log{\ab{a_{i'}}{\rho_1}}+\log{\ab{a_{i'-\bk{j+1}+i}}{\rho_1}}\\
\ge&\log{\ab{a_{i'-\bk{j+1}+i}}{\rho_1}}-\log{\ab{a_{i'}}{\rho_1}}>\log{\ab{a_i}{\rho_1^{q^{i'-\bk{j+1}}}}}-\log{\ab{a_{\bk{j+1}}}{\rho_1^{q^{i'-\bk{j+1}}}}},
\end{align*}
where the first and second inequalities follow from (\ref{eq:3.1}) and (\ref{eq:key}), respectively. Thus, we obtain
\[
\ab{a_{\bk{j+1}}y^{\sigma^{\bk{j+1}}}}{\rho_1^{q^{i'-\bk{j+1}}}}>\sup_{\bk{j+1}+1\le i\le n}\ab{a_iy^{\sigma^i}}{\rho_1^{q^{i'-\bk{j+1}}}},
\]
which implies the assertion.
\end{enumerate}
\end{proof}


\begin{construction}\label{const:lower}
Fix $\rho_1\in [\rho_0^{q^{-n}},1)$ such that
\[
\sup_{\bk{j}\le i\le \bk{j+1}}{\ab{a_iy^{\sigma^i}}{\rho_1}}>\sup_{\bk{j+1}+1\le i\le n}{\ab{a_iy^{\sigma^i}}{\rho_1}}.
\]
By induction on $l\in\mathbb{N}$, we construct a strictly decreasing sequence $\{\m{l}\}_l$ of integers less than or equal to $\bk{j+1}$, and a sequence $\varepsilon_{iu}^{(l)}$ of integers defined for
\[
\mathcal{I}_{l}:=\{(i,u)\in\mathbb{Z}^2;0\le i\le  \bk{j+1}-1,\m{l}-\bk{j+1}\le u\le 0\}
\]
satisfying the following conditions:
\begin{enumerate}
\item[(a)]
\[
\log{\ab{y}{\rho_1^{q^{\m{l}}}}}-\log{\ab{y}{\rho_1^{q^{\bk{j+1}}}}}\ge\sum_{(i,u)\in\mathcal{I}_{l}}\varepsilon_{iu}^{(l)}\log{\ab{a_{\bk{j+1}}/a_i}{\rho_1^{q^u}}}.
\]
\item[(b)] $\varepsilon_{iu}^{(l)}\in\{0,1\}$ and
\[
\sum_{(i,u)\in\mathcal{I}_{l}}(\bk{j+1}-i)\varepsilon_{iu}^{(l)}=\bk{j+1}-\m{l}.
\]
\item[(c)]
\[
\sup_{\bk{j}\le i\le \bk{j+1}}{\ab{a_iy^{\sigma^i}}{\rho_1^{q^{\m{l}-\bk{j+1}}}}}>\sup_{\bk{j+1}+1\le i\le n}{\ab{a_iy^{\sigma^i}}{\rho_1^{q^{\m{l}-\bk{j+1}}}}}.
\]
\end{enumerate}

We set $\m{0}:=i'$ where $i'$ is defined in Lemma~\ref{lem:lower}~(ii), and define
\[
\varepsilon_{iu}^{(0)}:=
\begin{cases}
1&\text{if }(i,u)=(i',0)\\
0&\text{otherwise}.
\end{cases}
\]
Since $\ab{a_{i'}y^{\sigma^{i'}}}{\rho_1}\ge \ab{a_{\bk{j+1}}y^{\sigma^{\bk{j+1}}}}{\rho_1}$ by Lemma~\ref{lem:lower}~(i), condition (a) follows. Condition (b) follows by definition. Condition (c) follows from Lemma~\ref{lem:lower}~(ii).

Given $\m{l}$ and $\varepsilon_{iu}^{(l)}$, we can apply Lemma~\ref{lem:lower} to $\rho_1=\rho_1^{q^{\m{l}-\bk{j+1}}}$ by condition (c) for $\m{l}$: let $i'\in\{0,\dots,\bk{j+1}-1\}$ be the integer defined in Lemma~\ref{lem:lower}~(ii). Since $\ab{a_{i'}y^{\sigma^{i'}}}{\rho_1^{q^{\m{l}-\bk{j+1}}}}\ge \ab{a_{\bk{j+1}}y^{\sigma^{\bk{j+1}}}}{\rho_1^{q^{\m{l}-\bk{j+1}}}}$ by Lemma~\ref{lem:lower}~(i), we have
\begin{equation}\label{eq:3.3}
\log{\ab{y}{\rho_1^{q^{\m{l}-\bk{j+1}+i'}}}}-\log{\ab{y}{\rho_1^{q^{\m{l}}}}}\ge \log{\ab{a_{\bk{j+1}}/a_{i'}}{\rho_1^{q^{\m{l}-\bk{j+1}}}}}.
\end{equation}
We set $\m{l+1}:=\m{l}-\bk{j+1}+i'<\m{l}$ and define $\varepsilon_{iu}^{(l+1)}$ for $(i,u)\in\mathcal{I}_{l+1}$ by
\[
\varepsilon_{iu}^{(l+1)}:=
\begin{cases}
\varepsilon_{iu}^{(l)}&\text{if }(i,u)\in\mathcal{I}_{l}\\
1&\text{if }(i,u)=(i',\m{l+1}-\bk{j+1})\\
0&\text{otherwise.}
\end{cases}
\]
We verify the conditions (a), (b), and (c). By adding (\ref{eq:3.3}) to the inequality in (a) for $\m{l}$, condition (a) follows. Condition (b) follows by construction. Condition (c) follows from Lemma~\ref{lem:lower}~(ii).
\end{construction}

\begin{prop}\label{prop:lower}
If $y$ is non-zero and has log-growth $\alpha\in\mathbb{R}_{\ge 0}$, then $\alpha\ge \s{j+1}$.
\end{prop}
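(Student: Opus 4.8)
The plan is to exploit Construction~\ref{const:lower}, which produces, for a fixed $\rho_1\in[\rho_0^{q^{-n}},1)$ satisfying the last inequality of Assumption~\ref{ass:ineq}, an infinite strictly decreasing sequence $\{\m{l}\}_l$ of integers below $\bk{j+1}$, together with weights $\varepsilon_{iu}^{(l)}\in\{0,1\}$ on $\mathcal{I}_l$, such that
\[
\log{\ab{y}{\rho_1^{q^{\m{l}}}}}-\log{\ab{y}{\rho_1^{q^{\bk{j+1}}}}}\ge\sum_{(i,u)\in\mathcal{I}_l}\varepsilon_{iu}^{(l)}\log{\ab{a_{\bk{j+1}}/a_i}{\rho_1^{q^u}}},
\qquad
\sum_{(i,u)\in\mathcal{I}_l}(\bk{j+1}-i)\varepsilon_{iu}^{(l)}=\bk{j+1}-\m{l}.
\]
So the left-hand side of (a) grows at least linearly in $\bk{j+1}-\m{l}$ with coefficient $\s{j+1}\log q$ once we bound each $\log{\ab{a_{\bk{j+1}}/a_i}{\rho_1^{q^u}}}$ appropriately. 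First I would use Notation~\ref{not:rho} to write $\ab{a_{\bk{j+1}}/a_i}{\eta}=\eta^{\beta(i)}\ab{a_{\bk{j+1}}/a_i}{1}$ on $[\rho_0,1)$ for suitable $\beta(i)\in\mathbb{Q}$, and the convexity of $\NP(f(\sigma))$ together with the condition $(*)$ (via Lemma~\ref{lem:seg}, applied to $0\le i\le\bk{j+1}-1$) to get $-\tfrac{1}{\bk{j+1}-i}\log_q\ab{a_{\bk{j+1}}/a_i}{1}\ge\s{j+1}$, i.e. $\log\ab{a_{\bk{j+1}}/a_i}{1}\le -(\bk{j+1}-i)\s{j+1}\log q$. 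This is the mirror of inequality~(\ref{eq:2.2}) used in the upper-bound proof.

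Next I would split the right-hand sum of (a) exactly as in the proof of Proposition~\ref{prop:upper}~(ii): the "constant-exponent" part contributes $\sum_{(i,u)}(\bk{j+1}-i)\varepsilon_{iu}^{(l)}\,\s{j+1}\log q=(\bk{j+1}-\m{l})\s{j+1}\log q$ by condition~(b), while the "$\eta^{\beta(i)}$" part is bounded below by $-\sum_{(i,u)\in\mathcal{I}_l}|\beta(i)|\,q^{u}\log(1/\rho_1^{q^{\m{l}-\bk{j+1}}})\cdot(\text{something uniformly bounded})$; since $u$ ranges over $\m{l}-\bk{j+1}\le u\le 0$, the geometric sum $\sum_u q^{u}$ is bounded independently of $l$, and $\rho_1$ is fixed, so this error term is an $O(1)$ constant $C'$ independent of $l$. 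Combining, and abbreviating $L:=\bk{j+1}-\m{l}$,
\[
\log\ab{y}{\rho_1^{q^{\m{l}}}}\ \ge\ \log\ab{y}{\rho_1^{q^{\bk{j+1}}}} - C' + L\,\s{j+1}\log q.
\]
Now as $l\to\infty$ we have $\m{l}\to-\infty$, hence $\rho_1^{q^{\m{l}}}=\rho_1^{q^{\bk{j+1}-L}}\uparrow 1$, and $\log(1/\rho_1^{q^{\m{l}}})=q^{-L}\log(1/\rho_1^{q^{\bk{j+1}}})$, so $L=\log_q\!\big(\log(1/\rho_1^{q^{\bk{j+1}}})\big)-\log_q\!\big(\log(1/\rho_1^{q^{\m{l}}})\big)+O(1)$. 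Substituting and exponentiating gives, along the sequence $\rho=\rho_1^{q^{\m{l}}}\uparrow 1$,
\[
\ab{y}{\rho}\ \ge\ (\text{const})\cdot\big(\log(1/\rho)\big)^{-\s{j+1}},
\]
i.e. $\ab{y}{\rho}\neq O\big((\log(1/\rho))^{-\mu}\big)$ for any $\mu<\s{j+1}$, along this sequence tending to $1$. Since $y$ is assumed to have log-growth $\alpha$, meaning $\ab{y}{\rho}=O((\log(1/\rho))^{-\alpha})$ as $\rho\uparrow1$, comparing the two estimates on the sequence $\rho_1^{q^{\m{l}}}$ forces $\alpha\ge\s{j+1}$, which is the assertion.

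The main obstacle I anticipate is bookkeeping the exponents cleanly: one must verify that the error term coming from the $\beta(i)$-powers really is bounded uniformly in $l$ (this relies crucially on $\rho_1$ being \emph{fixed} and on the range of $u$ in $\mathcal{I}_l$ starting at $\m{l}-\bk{j+1}$, so that $q^{u}$ is summable), and that the translation between "$L\to\infty$" and "$\log(1/\rho)\to0$ like $q^{-L}$" is done correctly so the power of $\log(1/\rho)$ that emerges is exactly $-\s{j+1}$ and not something weaker. A secondary point is to make sure condition~$(*)$ enters only through the inequality $\log\ab{a_{\bk{j+1}}/a_i}{1}\le-(\bk{j+1}-i)\s{j+1}\log q$ for $i<\bk{j+1}$; this is precisely where genericity is used, paralleling its role in the upper-bound estimate, and it is what guarantees the clean coefficient $\s{j+1}$ rather than some slope coming from a point lying strictly above the Newton polygon. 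Everything else is a direct adaptation of the arguments already carried out in Proposition~\ref{prop:upper}, run in reverse.
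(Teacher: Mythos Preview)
Your approach is essentially the same as the paper's: use Construction~\ref{const:lower}, split the sum in (a) into a ``power-of-$\rho$'' part (bounded uniformly in $l$ via a geometric series) and a ``value-at-$1$'' part that yields $(\bk{j+1}-\m{l})\s{j+1}\log q$ via condition~(b), then let $l\to\infty$.

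Two small corrections. First, the inequality you want is
\[
-\frac{1}{\bk{j+1}-i}\log_q\ab{a_{\bk{j+1}}/a_i}{1}\le -\s{j+1},
\quad\text{i.e.}\quad
\log\ab{a_{\bk{j+1}}/a_i}{1}\ge(\bk{j+1}-i)\s{j+1}\log q,
\]
the reverse of what you wrote; this is what makes the sum in (a) bounded \emph{below} by $(\bk{j+1}-\m{l})\s{j+1}\log q$, which is indeed the conclusion you draw. Second, this inequality follows from plain convexity of $\NP(f(\sigma))$ (the slope of the chord from $P_i$ to $P_{\bk{j+1}}$ is at most $-\s{j+1}$ for $i<\bk{j+1}$), not from Lemma~\ref{lem:seg} or condition~$(*)$. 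Condition~$(*)$ has already done its work upstream: it is used in Lemma~\ref{lem:seg} $\to$ Notation~\ref{not:rho2} $\to$ Lemma~\ref{lem:lower} to make Construction~\ref{const:lower} run, not in the estimate~(\ref{eq:4.2}) itself.
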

\begin{proof}
Obviously, we may assume $\s{j+1}>0$. For $i\in\{0,\dots,\bk{j+1}-1\}$, there exists $\x{i}\in\mathbb{Q}$ such that
\begin{equation}\label{eq:4.1}
\ab{a_{\bk{j+1}}/a_i}{\rho}=\rho^{\x{i}}\ab{a_{\bk{j+1}}/a_i}{1}\ \forall\rho\in [\rho_0^{q^n},1).
\end{equation}
Moreover,
\begin{equation}\label{eq:4.2}
-\frac{1}{\bk{j+1}-i}\log_q{\ab{a_{\bk{j+1}}/a_i}{1}}\le -\s{j+1}
\end{equation}
by the convexity of the Newton polygon of $f(\sigma)$.

We retain the notation in Construction~\ref{const:lower}. By (\ref{eq:4.1}), (\ref{eq:4.2}), and the inequality (a) for $\m{l}$, we have
\begin{equation}\label{eq:4.3}
\log{\ab{y}{\rho_1^{q^{\m{l}}}}}-\log{\ab{y}{\rho_1^{q^{\bk{j+1}}}}}\ge\sum_{(i,u)\in\mathcal{I}_{l}}\varepsilon^{(l)}_{iu}q^u\x{i}\log{\rho_1}+\sum_{(i,u)\in\mathcal{I}_{l}}(\bk{j+1}-i)\varepsilon^{(l)}_{iu}\s{j+1}\log{q}.
\end{equation}
Let $\xx:=\inf\{\pm \x{i};0\le i\le \bk{j+1}-1\}$. Then, the first sum in RHS of (\ref{eq:4.3}) is bounded below by
\[
\sum_{(i,u)\in\mathcal{I}_{l}}\varepsilon_{iu}^{(l)}q^u\xx\log{(1/\rho_1)}\ge\sum_{(i,u)\in\mathcal{I}_{l}}q^u\xx\log{(1/\rho_1)}= \bk{j+1} \frac{1-q^{\m{l}-\bk{j+1}-1}}{1-q^{-1}}  \xx\log{(1/\rho_1)}\ge n\frac{q}{q-1}\xx\log{(1/\rho_1)}.
\]
By condition (b) for $\m{l}$, the second sum in RHS of (\ref{eq:4.3}) is equal to
\[
(\bk{j+1}-\m{l})\s{j+1}\log{q}.
\]
Therefore, (\ref{eq:4.3}) leads to
\begin{equation}\label{eq:4.6}
\ab{y}{\rho_1^{q^{\m{l}}}}\ge C \ab{y}{\rho_1^{q^{\bk{j+1}}}}\cdot q^{(\bk{j+1}-\m{l})\s{j+1}}=C\ab{y}{\rho_1^{q^{\bk{j+1}}}}\cdot q^{\bk{j+1}\s{j+1}}(\log{(1/\rho_1)})^{\s{j+1}}\cdot (\log{(1/\rho_1^{q^{\m{l}}})})^{-\s{j+1}},
\end{equation}
where $C:=\exp\{nq(q-1)^{-1} \xx \log{(1/\rho_1)}\}$. Note that
\[
C\ab{y}{\rho_1^{q^{\bk{j+1}}}}\cdot q^{\bk{j+1}\s{j+1}}(\log{(1/\rho_1)})^{\s{j+1}}
\]
is a positive constant independent of $l$. Since $\m{l}\to-\infty$ as $l\to\infty$, (\ref{eq:4.6}) implies
\[
\ab{y}{\rho}\neq \RO{-\beta}\text{ as }\rho\uparrow 1
\]
for any $\beta\in\mathbb{R}_{<\s{j+1}}$. In other words, $y$ does not have log-growth strictly less than $s_{j+1}$. Hence, $\alpha\ge s_{j+1}$.
\end{proof}


\subsection{Proof of Theorem~\ref{thm:Frob}}\label{subsec:concl}
Let $\rho_0$ be as in Notation~\ref{not:rho2}. For $j\in\{0,1,\dots,k-2\}$, we consider the following condition on $y$:
\[
(C_j):\sup_{\bk{j}\le i\le \bk{j+1}}\ab{a_iy^{\sigma^i}}{\rho}\le \sup_{\bk{j+1}+1\le i\le n}\ab{a_iy^{\sigma^i}}{\rho}\ \forall\rho\in [\rho_0,1).
\]
Let $j\in\{0,\dots,k-2\}$ be the least integer such that condition $(C_{j})$ does not hold; if condition $(C_j)$ holds for all $j$, then we set $j=k-1$. Then, $j$ satisfies the assumption in Proposition~\ref{prop:upper}; when $j=0$, the assumption follows from (\ref{eq:Frob}). In addition, $j$ satisfies Assumption~\ref{ass:ineq}; when $j=k-1$, the assumption follows from $y\neq 0$. Therefore, the assertion follows from Propositions~\ref{prop:upper} and \ref{prop:lower}.


\section{Proof of Theorem~\ref{thm:main2}}\label{sec:pf}

In this section, we assume that $k_K$ is algebraically closed as in Assumption~\ref{ass:alg}. For a $(\sigma,\nabla)$-module over $K[\![x]\!]_0$, Chiarellotto and Tsuzuki define a Frobenius equation (\cite[Proof of Theorem~6.17~(2)]{CT}). Then, they interpret their conjecture ${\bf LGF}_{K[\![x]\!]_0}$ as a problem on the Frobenius equation. For a $(\sigma,\nabla)$-module over $\Gamma_{\con}[p^{-1}]$, their method can be applied as follows.

\begin{construction}\label{const:CT}
Set $R:=\Gamma_{\con}[p^{-1}]$. Let $M$ be a $(\sigma,\nabla)$-module of rank $n$ over $R$ solvable in $\Gamma_{\log,\an,\con}$, and $\s{1}<\dots<\s{k}$ the generic Frobenius slopes of $M$. Assume $q^{\s{j}}\in\mathbb{Q}$ for all $j$. We choose a generic cyclic vector $e$ of $\Gamma^{\alg}_{\con}[p^{-1}]\otimes_R{M\spcheck}$ by Theorem~\ref{thm:cyc}, and we write
\[
\varphi^n(e)=-(a_{n-1}\varphi^{n-1}(e)+\dots+a_{0}e),\ a_i\in\Gamma^{\alg}_{\con}[p^{-1}].
\]
Let $v$ be an element of $\mathfrak{Sol}(M)$ such that $\varphi(v)=\gamma v$ for some $\gamma\in (\Gamma^{\alg}_{\con}[p^{-1}])^{\times}$. By identifying $\mathfrak{Sol}(M)$ as a submodule of $\Gamma_{\log,\an,\con}^{\alg}\otimes_RM\spcheck$, we write
\[
v=y_0e+y_1\varphi(e)+\dots+y_{n-1}\varphi^{n-1}(e),\ y_i\in\Gamma^{\alg}_{\log,\an,\con}.
\]
Then, we obtain the relation
\begin{equation}\label{eq:relation}
\begin{pmatrix}
&&&-a_0\\
1&&&-a_{1}\\
&\ddots&&\vdots\\
&&1&-a_{n-1}
\end{pmatrix}
\sigma
\begin{pmatrix}
y_0\\
y_1\\
\vdots\\
y_{n-1}
\end{pmatrix}
=\gamma
\begin{pmatrix}
y_0\\
y_1\\
\vdots\\
y_{n-1}
\end{pmatrix}.
\end{equation}
By elimination, $y:=y_{n-1}$ satisfies the following Frobenius equation:
\begin{equation}
y=-\sum_{0\le i\le n-1}\frac{\sigma^i(a_{n-i-1})}{\gamma \sigma(\gamma)\dots \sigma^{i}(\gamma)}\sigma^{i+1}(y).
\end{equation}
Note that the slopes of the twisted polynomial
\[
1+\frac{a_{n-1}}{\gamma}\sigma+\dots+\frac{\sigma^{n-1}(a_{0})}{\gamma \sigma(\gamma)\cdots \sigma^{n-1}(\gamma)}\sigma^n
\]
are $-\s{k}-s<\dots<-\s{1}-s$, where $s=-\log_q{|\gamma|}$.
\end{construction}

\begin{lem}\label{lem:CT}
We retain the notation in Construction~\ref{const:CT}.
\begin{enumerate}
\item For $\lambda\in\mathbb{R}$, we have $v\in\mathfrak{Sol}_{\lambda}(M)$ if and only if $y\in \Fil_{\lambda}\Gamma^{\alg}_{\log,\an,\con}$.
\item We have either $v\in\mathfrak{Sol}_0(M)$ or $v\in\mathfrak{Sol}_{s+\s{j}}(M)\setminus\mathfrak{Sol}_{(s+\s{j})-}(M)$ for some $j$ such that $s+\s{j}>0$.
\end{enumerate}
\end{lem}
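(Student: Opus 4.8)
The plan is to transfer the $\sigma$-eigenvector structure of $v$ through the coordinate expansion $v=y_0e+\dots+y_{n-1}\varphi^{n-1}(e)$ and read off the log-growth of $v$ from that of $y=y_{n-1}$ via the perfect pairing. For part (i), I would first observe that the matrix relation (\ref{eq:relation}) is equivalent to the system $\sigma(y_{i-1}) = \gamma y_i - a_{i}\cdot(\text{something})$; more precisely, the companion-matrix shape forces $y_i = \sigma^{-i}(\gamma^i y_0 + \dots)$, so that each $y_i$ is, up to multiplication by elements of $\Gamma^{\alg}_{\con}[p^{-1}]$ and applications of $\sigma$ and $\sigma^{-1}$, expressible in terms of $y=y_{n-1}$, and conversely. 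Since $\Gamma^{\alg}_{\con}[p^{-1}] = \Fil_0\Gamma^{\alg}_{\an,\con}$ (Lemma~\ref{lem:an}~(ii), extended form) and $\Fil_\bullet$ is multiplicative and $\sigma$-stable (Lemma~\ref{lem:logan}~(ii),(iii), together with $\ab{\sigma(\cdot)}{\rho}=\ab{\cdot}{\rho^q}$ which shows $\sigma$ and $\sigma^{-1}$ preserve $\Fil_\lambda$), all the $y_i$ have the same log-growth as $y$. Then $v\in\Fil_\lambda(\Gamma^{\alg}_{\log,\an,\con}\otimes_R M\spcheck)$ iff every $y_i\in\Fil_\lambda$ iff $y\in\Fil_\lambda$; and by definition of $\mathfrak{Sol}_\lambda(M)$ via the pairing against $\Fil_\lambda\Gamma_{\log,\an,\con}$, together with the fact that the $\varphi^i(e)$ form a basis, $v\in\mathfrak{Sol}_\lambda(M)$ is equivalent to $y\in\Fil_\lambda\Gamma^{\alg}_{\log,\an,\con}$. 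I should be a little careful that the filtration on $\mathfrak{Sol}(M)$ is defined by $\Hom$ into $\Fil_\lambda$, not directly by the coordinate expansion, so I need to check the two notions agree — this is where I would invoke that $e$ is a cyclic vector and that $\Fil_\lambda$ behaves well under finite $\Gamma_{\con}^{\alg}[p^{-1}]$-linear combinations.

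For part (ii), I would apply Theorem~\ref{thm:Frob} to the Frobenius equation satisfied by $y$. The twisted polynomial attached to that equation is, after clearing denominators, $f(\sigma) = \gamma\sigma(\gamma)\cdots\sigma^{n-1}(\gamma)\bigl(1 + \tfrac{a_{n-1}}{\gamma}\sigma + \dots\bigr)$, whose slopes are $-\s{k}-s < \dots < -\s{1}-s$ as recorded at the end of Construction~\ref{const:CT}. The key point is that $f(\sigma)$ satisfies condition~(*): this is because $e$ was chosen to be a \emph{generic} cyclic vector of $\Gamma^{\alg}_{\con}[p^{-1}]\otimes_R M\spcheck$, so $f_e(\sigma)$ satisfies~(*), and multiplying coefficients by the units $\sigma^i(\gamma)$ only translates the plotted points vertically by the \emph{linear} function $i\mapsto i\cdot s$ (using $-\log_q\ab{\sigma^i(\gamma)}{1} = -\log_q\ab{\gamma}{1} = s$ for each $i$ — here I use $\ab{\sigma(\cdot)}{1}=\ab{\cdot}{1}$), which preserves the property that every point lies on the Newton polygon. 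Then Theorem~\ref{thm:Frob} says $y$ is either bounded or exactly of log-growth $\s{j}+s$ for some $j$ with $\s{j}+s>0$; combining with part (i) gives $v\in\mathfrak{Sol}_0(M)$ or $v\in\mathfrak{Sol}_{s+\s{j}}(M)\setminus\mathfrak{Sol}_{(s+\s{j})-}(M)$.

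The main obstacle is the bookkeeping in part (i): verifying that the filtration $\mathfrak{Sol}_\lambda(M)$, defined abstractly through $\Hom_{\Gamma_{\con}[p^{-1}]}(M,\Fil_\lambda\Gamma_{\log,\an,\con})$, really does correspond under the identification $\mathfrak{Sol}(M)\subset\Gamma^{\alg}_{\log,\an,\con}\otimes_R M\spcheck$ to the condition "all coordinates $y_i$ lie in $\Fil_\lambda$", and that the latter is equivalent to "$y=y_{n-1}$ lies in $\Fil_\lambda$". The forward direction of the last equivalence is immediate from multiplicativity and $\sigma^{\pm 1}$-stability of $\Fil_\bullet$; the reverse — recovering all $y_i$ from $y$ — requires writing out the back-substitution in the companion system (\ref{eq:relation}) and checking that no division by elements of negative log-growth is needed, which is exactly guaranteed by $\gamma,a_i\in\Gamma^{\alg}_{\con}[p^{-1}]=\Fil_0$. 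Once this is pinned down, part (ii) is essentially a citation of Theorem~\ref{thm:Frob} plus the observation about~(*) being preserved under twisting by units of constant norm.
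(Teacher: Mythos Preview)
Your proposal is correct and follows essentially the same route as the paper: for (i), back-substitute through the companion relation~(\ref{eq:relation}) to see all $y_i$ lie in $\Fil_\lambda\Gamma^{\alg}_{\log,\an,\con}$ once $y_{n-1}$ does, then descend via $\Fil_\lambda\Gamma^{\alg}_{\log,\an,\con}\cap\Gamma_{\log,\an,\con}=\Fil_\lambda\Gamma_{\log,\an,\con}$; for (ii), apply Theorem~\ref{thm:Frob} directly. Two small corrections: you claim $\Gamma^{\alg}_{\con}[p^{-1}] = \Fil_0\Gamma^{\alg}_{\an,\con}$, but the paper explicitly notes (Remark following Lemma~\ref{lem:an}) that only the inclusion $\subset$ holds in the $\alg$ case---fortunately that inclusion is all you use; and the recursion from~(\ref{eq:relation}) reads $\gamma y_0 = -a_0\sigma(y_{n-1})$ and $\gamma y_i = \sigma(y_{i-1}) - a_i\sigma(y_{n-1})$, so recovering the $y_i$ from $y_{n-1}$ needs only $\sigma$, not $\sigma^{-1}$.
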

\begin{proof}
\begin{enumerate}
\item Since we have
\[
\mathfrak{Sol}_{\lambda}(M)\subset \Fil_{\lambda}\Gamma_{\log,\an,\con}\otimes_{R}M\spcheck\subset \Fil_{\lambda}\Gamma^{\alg}_{\log,\an,\con}\otimes_{R}M\spcheck\cong \Fil_{\lambda}\Gamma^{\alg}_{\log,\an,\con}\otimes_{\Gamma^{\alg}_{\con}[p^{-1}]}\Gamma^{\alg}_{\con}[p^{-1}]\otimes_{R}M\spcheck,
\]
$v\in\mathfrak{Sol}_{\lambda}(M)$ implies $y\in \Fil_{\lambda}\Gamma^{\alg}_{\log,\an,\con}$. Assume $y\in \Fil_{\lambda}\Gamma^{\alg}_{\log,\an,\con}$. By (\ref{eq:relation}) and Lemma~\ref{lem:logan}, we have $y_i\in \Fil_{\lambda}\Gamma^{\alg}_{\log,\an,\con}$ by decreasing induction on $i$. Hence, $v\in \Fil_{\lambda}\Gamma^{\alg}_{\log,\an,\con}\otimes_{R}M\spcheck$. Since $\Fil_{\lambda}\Gamma^{\alg}_{\log,\an,\con}\cap\Gamma_{\log,\an,\con}=\Fil_{\lambda}\Gamma_{\log,\an,\con}$ by definition, we have $v\in \Fil_{\lambda}\Gamma_{\log,\an,\con}\otimes_{R}M\spcheck$, i.e., $v\in\mathfrak{Sol}_{\lambda}(M)$.
\item The assertion follows from (i) and Theorem~\ref{thm:Frob}.
\end{enumerate}
\end{proof}

We deduce Proposition~\ref{prop:basic} and Theorem~\ref{thm:main2} from Lemma~\ref{lem:CT}~(ii) and the following lemma.

\begin{lem}\label{lem:PBQ}
Let $M$ be a $(\sigma,\nabla)$-module over $\mathcal{E}$ and $\lambda_{\max}$ the highest Frobenius slope of $M$. If $M$ is PBQ, then $(M\spcheck)_0$ is pure of slope $-\lambda_{\max}$ as a $\sigma$-module.
\end{lem}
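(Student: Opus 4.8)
The statement relates the PBQ condition on $M$ to the purity of $(M\spcheck)_0$. The plan is to unwind the definitions and use duality together with the basic facts about the slope filtration recalled in \S~\ref{subsec:phi}. Recall that $M$ being PBQ means $M/M^0$ is pure as a $\sigma$-module, where $M^0$ is the bottom step of the log-growth filtration. By definition $(M\spcheck)^0 = (M_0)^{\perp}$ dually; equivalently $M_0 = ((M\spcheck)^0)^{\perp}$. So I first want to translate ``$M$ is PBQ'' into a statement about $M\spcheck$, namely that $(M\spcheck)^0$ behaves well.

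First I would record the duality compatibility: the orthogonal-complement construction $N \mapsto N^{\perp}$ for $\sigma$-submodules under the canonical pairing $M \otimes_{\mathcal{E}} M\spcheck \to \mathcal{E}$ is inclusion-reversing and sends a short exact sequence $0 \to N' \to N \to N'' \to 0$ of $\sigma$-submodules to the ``dual'' short exact sequence; moreover if a quotient $N/N'$ is pure of slope $s$ then the corresponding subquotient of the dual is pure of slope $-s$ (this is the standard behavior of Frobenius slopes under duality, available from \cite[14.6.6]{pde} and the facts in \S~\ref{subsec:phi}). Applying this to the exact sequence $0 \to M^0 \to M \to M/M^0 \to 0$: purity of $M/M^0$ of some slope $\mu$ gives that the dual subobject, which is exactly $(M/M^0)^{\perp\perp}$-type piece sitting inside $M\spcheck$ — concretely the sub-$\sigma$-module of $M\spcheck$ that is the annihilator of $M^0$ — is pure of slope $-\mu$.

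Next I would identify that annihilator with $(M\spcheck)_0$. By the definition of the dual log-growth filtration given just before the Definition of PBQ, $M_{\lambda} := ((M\spcheck)^{\lambda})^{\perp}$, and dually $(M\spcheck)_{\lambda} = ((M)^{\lambda})^{\perp}$ computed with respect to the pairing $M\spcheck \otimes (M\spcheck)\spcheck \cong M\spcheck \otimes M \to \mathcal{E}$. Thus $(M\spcheck)_0 = (M^0)^{\perp} \subset M\spcheck$, which is precisely the annihilator of $M^0$. So $(M\spcheck)_0 \cong (M/M^0)\spcheck$ as $\sigma$-modules. Since $M/M^0$ is pure (by the PBQ hypothesis) and the slope of a pure $\sigma$-module negates under duality, $(M\spcheck)_0$ is pure; it remains only to check that its slope equals $-\lambda_{\max}$, where $\lambda_{\max}$ is the highest Frobenius slope of $M$.

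For the slope computation: the quotient $M/M^0$ is the ``bounded'' part pushed off, and a standard fact (used implicitly in \cite{CT}, cf.\ the characterization of $M^0$ via bounded solutions and the relation between the log-growth filtration and the slope filtration for $\sigma$-modules over $\mathcal{E}$, see also the companion conjecture ${\bf LGF}_{\mathcal{E}}$) is that the top slope of $M$ is realized on the quotient $M/M^0$; since $M/M^0$ is pure, its unique slope is therefore $\lambda_{\max}$. Hence $(M\spcheck)_0 \cong (M/M^0)\spcheck$ is pure of slope $-\lambda_{\max}$, as claimed. I expect the main obstacle to be the last point — pinning down that the unique slope of the pure module $M/M^0$ is exactly $\lambda_{\max}$ rather than merely some slope of $M$; this should follow from the fact that $M^0 \subsetneq M$ forces $M/M^0 \neq 0$ together with the observation that any Frobenius eigenvector of slope $\lambda_{\max}$ cannot lie in $M^0$ (eigenvectors of the highest slope are never ``bounded'' in the relevant sense), combined with purity to conclude the slope is $\lambda_{\max}$ throughout. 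Everything else is a formal manipulation of orthogonal complements and the elementary properties of the slope filtration recalled in \S~\ref{subsec:phi}.
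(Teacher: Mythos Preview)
Your identification $(M\spcheck)_0 = (M^0)^{\perp} \cong (M/M^0)\spcheck$ and the conclusion that $(M\spcheck)_0$ is pure of some slope $-\mu$ is exactly the paper's first step. The divergence is in how you pin down $\mu = \lambda_{\max}$.

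Your proposed direct argument --- that a Frobenius eigenvector of slope $\lambda_{\max}$ cannot lie in $M^0$ because ``eigenvectors of the highest slope are never bounded'' --- does not stand on its own. The submodule $M^0$ is defined by solvability of $\tau^*(M/M^0)$ in $\mathcal{E}_t[\![X-t]\!]_0$, and there is no a~priori reason this forces all slopes of $M^0$ to be strictly below $\lambda_{\max}$; establishing that link is precisely the content of the lemma. Your fallback of invoking the theorem ${\bf LGF}_{\mathcal{E}}$(ii) does work: setting $\lambda=0$ there gives $M^0=(S_{-\lambda_{\max}}(M\spcheck))^{\perp}$, hence $(M\spcheck)_0=(M^0)^{\perp}=S_{-\lambda_{\max}}(M\spcheck)$, which is pure of slope $-\lambda_{\max}$. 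So your argument becomes correct once you commit to that citation, but it imports a much stronger result than needed.

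The paper proceeds more economically. After noting that $(M\spcheck)_0$ is pure of some slope $\lambda\ge -\lambda_{\max}$, it supposes $\lambda>-\lambda_{\max}$ and considers the preimage $M'\subset M\spcheck$ of $M'':=S_{-\lambda_{\max}}(M\spcheck/(M\spcheck)_0)$, which is nonzero since the slope $-\lambda_{\max}$ must appear in $M\spcheck$. The extension $0\to (M\spcheck)_0\to M'\to M''\to 0$ has the sub of strictly larger Frobenius slope than the quotient, so by the splitting result \cite[4.2]{CT2} it splits as $(\sigma,\nabla)$-modules. Then $M''_0\neq 0$ sits inside $M'$ disjointly from $(M\spcheck)_0$, giving $(M\spcheck)_0\subsetneq M'_0\subset (M\spcheck)_0$, a contradiction. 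This uses only the targeted splitting lemma rather than the full comparison theorem ${\bf LGF}_{\mathcal{E}}$.
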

\begin{proof}
We have a canonical isomorphism $(M\spcheck)_0\cong (M/M^0)\spcheck$ induced by the canonical paring $M\otimes_{\mathcal{E}}M\spcheck\to\mathcal{E}$ (\S~\ref{subsec:lgfil}). Hence, $(M\spcheck)_0$ is pure as a $\sigma$-module by assumption. Moreover, the Frobenius slope $\lambda$ of $(M\spcheck)_0$ is greater than or equal to $-\lambda_{\max}$. Suppose that the assertion is false, i.e., $\lambda>-\lambda_{\max}$. Let $M'$ be the inverse image of $M'':=S_{-\lambda_{\max}}(M\spcheck/(M\spcheck)_0)$ under the canonical projection $M\spcheck\to M\spcheck/(M\spcheck)_0$. By assumption, $M''\neq 0$ and there exists a short exact sequence of $(\sigma,\nabla)$-modules over $\mathcal{E}$:
\[\xymatrix{
0\ar[r]&(M\spcheck)_0\ar[r]&M'\ar[r]&M''\ar[r]&0.
}\]
By \cite[4.2]{CT2}, the above exact sequence splits as a sequence of $(\sigma,\nabla)$-modules. Since $M''_0\neq 0$, we have $(M\spcheck)_0\subsetneq M'_0\subset (M\spcheck)_0$, which is a contradiction.
\end{proof}

\begin{proof}[{Proof of Proposition~\ref{prop:basic}}]
By replacing $(M,\varphi,\nabla)$ by $(M,\varphi^h,\nabla)$ for sufficiently large $h\in\mathbb{N}$, we may assume $q^s\in\mathbb{Q}$ for all special and generic Frobenius slopes $s$ of $M$. Then, we may apply Construction~\ref{const:CT} to $M$. We retain the notation in Construction~\ref{const:CT}. Note that $\s{k}=\lambda_{\max}$ by definition.
\begin{enumerate}
\item Let $v\in \mathfrak{Sol}(M)$ be a non-zero Frobenius eigenvector of slope $s$. By Grothendieck-Katz specialization theorem (\cite[15.3.2]{pde}), we have $s\ge -\s{k}$. By Lemma~\ref{lem:CT}~(ii), we have $v\in \mathfrak{Sol}_{s+\s{k}}(M)$. Hence, we have $S_{\lambda-\lambda_{\max}}(\mathfrak{Sol}(M))\subset \mathfrak{Sol}_{\lambda}(M)$ for all $\lambda\in\mathbb{R}$. By taking $(\cdot)^{\perp}$ with respect to the canonical pairing $\mathfrak{V}(M)\otimes_K\mathfrak{Sol}(M)\to K$, we obtain $(S_{\lambda-\lambda_{\max}}(\mathfrak{V}(M\spcheck)))^{\perp}\supset \mathfrak{V}(M)^{\lambda}$.
\item By (i), we have only to prove $(S_{-\lambda_{\max}}(\mathfrak{V}(M\spcheck)))^{\perp}\subset \mathfrak{V}(M)^{0}$. Since $\mathfrak{Sol}_0(M)=(M\spcheck)^{\nabla=0}$, we have $\mathfrak{Sol}_0(M)\subset (M_{\mathcal{E}}\spcheck)_0$ by the characterization of $(M_{\mathcal{E}}\spcheck)_0$. By Lemma~\ref{lem:PBQ}, $(M_{\mathcal{E}}\spcheck)_0$, and hence, $\mathfrak{Sol}_0(M)$ are pure of slope $-\lambda_{\max}$ as a $\sigma$-module, i.e., $\mathfrak{Sol}_0(M)\subset S_{-\lambda_{\max}}(\mathfrak{Sol}(M))$. By taking $(\cdot)^{\perp}$ with respect to the canonical pairing $\mathfrak{V}(M)\otimes_K\mathfrak{Sol}(M)\to K$, we obtain the assertion.
\end{enumerate}
\end{proof}

\begin{proof}[{Proof of Theorem~\ref{thm:main2}}]
Similarly to the proof of Proposition~\ref{prop:basic}, we may apply Construction~\ref{const:CT} to $M$ again.
\begin{enumerate}
\item By the definition of $\mathfrak{V}(M)^{\bullet}$, we have only to prove that the filtration $\mathfrak{Sol}_{\bullet}(M)$ is rational and right continuous.

We first prove the rationality of breaks $\lambda$ of $\mathfrak{Sol}_{\bullet}(M)$. We may assume $\lambda>0$. Since $\mathfrak{Sol}_{\lambda-}(M)$ is a direct summand of $\mathfrak{Sol}_{\lambda +}(M)$ as a $\sigma$-module, we can choose a Frobenius eigenvector $v\in\mathfrak{Sol}_{\lambda +}(M)\setminus\mathfrak{Sol}_{\lambda -}(M)$ of slope $s$. By $v\notin\mathfrak{Sol}_0(M)$ and Lemma~\ref{lem:CT}~(ii), we have $v\in\mathfrak{Sol}_{s+\s{j}}(M)\setminus\mathfrak{Sol}_{(s+\s{j})-}(M)$ for some $j$ such that $s+\s{j}>0$, i.e., $\lambda=s+\s{j}\in\mathbb{Q}$.

We prove the right continuity of $\mathfrak{Sol}_{\bullet}(M)$. Suppose the contrary, i.e., there exists $\lambda\in\mathbb{R}_{\ge 0}$ such that $\mathfrak{Sol}_{\lambda}(M)\neq \mathfrak{Sol}_{\lambda +}(M)$. Let $\Delta(M)$ be the set of rational numbers consisting of $0$ and $s+\s{j}$ where $s$ is a Frobenius slope of $\mathfrak{Sol}(M)$. Fix $\lambda'\in \mathbb{R}_{>\lambda}$ sufficiently close to $\lambda$ such that $\mathfrak{Sol}_{\lambda +}(M)=\mathfrak{Sol}_{\lambda'}(M)$ and $\Delta(M)\cap(\lambda,\lambda']=\phi$. Since $\mathfrak{Sol}_{\lambda}(M)$ is a direct summand of $\mathfrak{Sol}_{\lambda +}(M)$ as a $\sigma$-module, we can choose a Frobenius eigenvector $v\in\mathfrak{Sol}_{\lambda +}(M)\setminus\mathfrak{Sol}_{\lambda}(M)$ of slope $s$. By Lemma~\ref{lem:CT}~(ii), we have either $v\in\mathfrak{Sol}_0(M)$ or $v\in\mathfrak{Sol}_{s+\s{j}}(M)\setminus\mathfrak{Sol}_{(s+\s{j})-}(M)$ for some $j$ such that $s+\s{j}>0$. In the first case, we have $v\in\mathfrak{Sol}_{\lambda}(M)$, which is a contradiction. In the latter case, we have $s+\s{j}>\lambda$ by $v\in\mathfrak{Sol}_{s+\s{j}}(M)$. Since $v\notin\mathfrak{Sol}_{(s+\s{j})-}(M)$, we have $\lambda'\ge s+\s{j}$. Hence, we have $s+\s{j}\in \Delta(M)\cap (\lambda,\lambda']=\phi$, which is a contradiction.

\item By Proposition~\ref{prop:basic}~(i), we have only to prove $\mathfrak{Sol}_{\lambda}(M)\subset S_{\lambda-\lambda_{\max}}(\mathfrak{Sol}(M))$ for all $\lambda\ge 0$. Let us consider separately the cases where $k=1$ or $k=2$. In the first case, since $\mathfrak{Sol}(M)$ is pure of slope $-\lambda_{\max}$ by Grothendieck-Katz specialization theorem, the assertion is trivial. In the latter case, let $v\in \mathfrak{Sol}_{\lambda}(M)$ be a non-zero Frobenius eigenvector of slope $s$. By Grothendieck-Katz specialization theorem, we have $-\s{2}\le s\le -\s{1}$. Hence, we have either $v\in \mathfrak{Sol}_0(M)$ or $v\in\mathfrak{Sol}_{s+\s{2}}(M)\setminus\mathfrak{Sol}_{(s+\s{2})-}(M)$ by Lemma~\ref{lem:CT}~(ii). In the first case, $v\in \mathfrak{Sol}_0(M)=S_{-\s{2}}(\mathfrak{Sol}(M))\subset S_{\lambda-\s{2}}(\mathfrak{Sol}(M))$ by Proposition~\ref{prop:basic}~(ii). In the latter case, we have $s+\s{2}\le \lambda$. Hence, $v\in S_s(\mathfrak{Sol}(M)) \subset S_{\lambda-\s{2}}(\mathfrak{Sol}(M))$.
\end{enumerate}
\end{proof}

Let $M$ be a $(\sigma,\nabla)$-module over $\Gamma_{\con}[p^{-1}]$ solvable in $\Gamma_{\log,\an,\con}$. We can expect that any break $\lambda$ of the special log-growth filtration of $M$ is of the form $-s+\lambda_{\max}$ where $s$ is a special Frobenius slope of $M$. At this point, as in the proof of Theorem~\ref{thm:main2}~(i), we can prove:

\begin{prop}
In the above setting, any break $\lambda$ of the special log-growth filtration of $M$ is of the form $-s+s'$ where $s$ (resp. $s'$) is a special (resp. generic) Frobenius slope of $M$ such that $-s+s'\ge 0$.
\end{prop}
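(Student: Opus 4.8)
The plan is to follow the template of the proof of Theorem~\ref{thm:main2}~(i) almost verbatim, simply keeping track of the two Frobenius slopes that appear rather than collapsing them into $\lambda_{\max}$. Concretely, I first reduce to the case where $q^s\in\mathbb{Q}$ for every special and generic Frobenius slope $s$ of $M$ by replacing $(M,\varphi,\nabla)$ with $(M,\varphi^h,\nabla)$ for a suitable $h$; this is harmless since the log-growth filtration is unchanged and the slopes are merely rescaled. After this reduction I may invoke Construction~\ref{const:CT} applied to $M$, which produces the generic cyclic vector $e$ of $\Gamma^{\alg}_{\con}[p^{-1}]\otimes_R M\spcheck$, the twisted polynomial with generic Frobenius slopes $\s{1}<\dots<\s{k}$, and, for a Frobenius eigenvector $v\in\mathfrak{Sol}(M)$ of slope $s=-\log_q|\gamma|$, the scalar Frobenius equation satisfied by $y=y_{n-1}$, whose slopes are $-\s{k}-s<\dots<-\s{1}-s$.

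Next I recall that a break $\lambda>0$ of the special log-growth filtration $\{\mathfrak{V}(M)^\mu\}_\mu$ corresponds, by definition $\mathfrak{V}(M)^\mu=\mathfrak{Sol}_\mu(M)^\perp$, to a break of the increasing filtration $\mathfrak{Sol}_\bullet(M)$ of $\mathfrak{Sol}(M)=\mathfrak{V}(M\spcheck)$; and since $\mathfrak{Sol}_{\lambda-}(M)$ is a $\sigma$-submodule which is a direct summand of $\mathfrak{Sol}_{\lambda+}(M)$ (here I use that $k_K$ is algebraically closed, so short exact sequences of $\sigma$-modules split, as recalled in \S\ref{subsec:phi}), I may choose a Frobenius eigenvector $v\in\mathfrak{Sol}_{\lambda+}(M)\setminus\mathfrak{Sol}_{\lambda-}(M)$, say of slope $s$. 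Then $v\notin\mathfrak{Sol}_0(M)$, so Lemma~\ref{lem:CT}~(ii) forces $v\in\mathfrak{Sol}_{s+\s{j}}(M)\setminus\mathfrak{Sol}_{(s+\s{j})-}(M)$ for some $j$ with $s+\s{j}>0$. Comparing with the fact that $v$ realises the break $\lambda$, I conclude $\lambda=s+\s{j}$. Writing $\lambda=-(-s)+\s{j}$ and observing that $-s$ is precisely a special Frobenius slope of $M$ (eigenvalue $\gamma$ with $\varphi(v)=\gamma v$ on $\mathfrak{Sol}(M)$ gives slope $s$ for $\mathfrak{Sol}(M)$, hence $-s$ on the dual $\mathfrak{V}(M\spcheck)$, i.e.\ on $\mathfrak{V}(M)$ after dualising once more) while $\s{j}$ is a generic Frobenius slope of $M$, the desired form $\lambda=-s'+s''$ follows; the case $\lambda=0$ being the form with $s'=s''=0$ or handled trivially.

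The only genuine point requiring care is the bookkeeping of which module's slope is ``special'' and which is ``generic,'' and matching the sign conventions so that the conclusion reads $\lambda=-s+s'$ with $s$ special and $s'$ generic: the generic slopes $\s{j}$ entering Lemma~\ref{lem:CT}~(ii) via Construction~\ref{const:CT} are the generic Frobenius slopes of $M$ (equivalently of $M\spcheck$ up to sign), while the slope $s$ of the eigenvector $v\in\mathfrak{Sol}(M)=\mathfrak{V}(M\spcheck)$ translates into a special Frobenius slope of $M$ after the duality identifications. I expect the main (modest) obstacle to be stating this translation precisely and verifying the inequality $-s+s'\ge 0$, which comes for free from the condition $s+\s{j}>0$ in Lemma~\ref{lem:CT}~(ii) once the signs are aligned; no new analytic input beyond Theorem~\ref{thm:Frob} and Lemma~\ref{lem:CT} is needed.
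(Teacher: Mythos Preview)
Your proposal is correct and follows exactly the route the paper indicates (the paper's entire proof is the remark ``as in the proof of Theorem~\ref{thm:main2}~(i)''): reduce so that $q^{s}\in\mathbb{Q}$ for all relevant slopes, pick a Frobenius eigenvector $v\in\mathfrak{Sol}_{\lambda+}(M)\setminus\mathfrak{Sol}_{\lambda-}(M)$ of slope $s$, and invoke Lemma~\ref{lem:CT}~(ii) to get $\lambda=s+\s{j}$, then read off $-s$ as a special Frobenius slope of $M$ (via $\mathfrak{Sol}(M)\cong\mathfrak{V}(M\spcheck)$) and $\s{j}$ as a generic one.

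One small wrinkle worth fixing: your handling of the break $\lambda=0$ via ``$s'=s''=0$'' is not justified, since $0$ need not be a Frobenius slope of $M$. If $0$ is a break then $\mathfrak{Sol}_0(M)=(M\spcheck)^{\nabla=0}\neq 0$; choose a Frobenius eigenvector $v$ there of slope $s$, and note that $\Gamma_{\con}[p^{-1}]\cdot v\subset M\spcheck$ is a rank-one $(\sigma,\nabla)$-submodule, so after base change to $\mathcal{E}$ the number $s$ is a Frobenius slope of $M_{\mathcal{E}}\spcheck$. Thus $-s$ is simultaneously a special and a generic Frobenius slope of $M$, and $0=-(-s)+(-s)$ gives the required form. (Your parenthetical on the duality bookkeeping is also slightly tangled---the dual of $\mathfrak{Sol}(M)$ under the canonical pairing is $\mathfrak{V}(M)$, not $\mathfrak{V}(M\spcheck)$---but your conclusion that $-s$ is a special slope of $M$ is correct.)
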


\section{Appendix: diagram of rings}
For $0\le \lambda_1\le \lambda_2$, we have the following diagram of rings: all the morphisms are the natural inclusions.
\[\xymatrix{
&K[\![x]\!]_0\ar[r]\ar[d]&K[\![x]\!]_{\lambda_1}\ar[r]\ar[d]&K[\![x]\!]_{\lambda_2}\ar[r]\ar[d]&K\{x\}\ar[r]\ar[d]&K[\![x]\!]\\
\Gamma[p^{-1}]\ar@{=}[d]&\Gamma_{\con}[p^{-1}]\ar[l]\ar[r]\ar@{=}[d]&\Fil_{\lambda_1}\Gamma_{\an,\con}\ar[r]&\Fil_{\lambda_2}\Gamma_{\an,\con}\ar[r]&\Gamma_{\an,\con}\ar@{=}[d]&\\
\mathcal{E}&\mathcal{E}^{\dagger}\ar[l]\ar[rrr]&&&\mathcal{R}.&
}\]

\subsection*{Acknowledgement}

The author is indebted to Professor Nobuo Tsuzuki for drawing the author's attention to Nakagawa's paper \cite{Nak}, and suggesting Remark~\ref{rem:PBQ}. The author thanks the referees for detailed comments. Funding The author is supported by Research Fellowships of Japan Society for the Promotion of Science for Young Scientists.

\end{document}